\documentclass{article}
\usepackage{amssymb,amsmath,amsthm,tikz,multirow,nccrules,geometry}
\usetikzlibrary{calc, arrows, arrows.meta}
\title{Tiling of Hyperbolic Surface by Multiple Tiles}
\author{Chunlin Li, Erxiao Wang\thanks{Corresponding author (wang.eric@zjnu.edu.cn).  Research was supported by National Natural Science Foundation of China NSFC-RGC 12361161603 and Key Projects of Zhejiang Natural Science Foundation LZ22A010003.}, Wu Jie,
Zhejiang Normal University \\
Min Yan\thanks{Research was supported by NSFC-RGC Joint Research Scheme N-HKUST607/23 and Hong Kong RGC General Research Fund 16310925.}, 
Hong Kong University of Science and Technology}

\usepackage[hidelinks, hyperindex]{hyperref}

\hyperref[sec:function]{}

\newcommand{\rr}[1]{\textcolor{red}{#1}}

\newcommand{\mc}{\mathcal}
\newcommand{\bb}{\mathbb}

\makeatletter
\newsavebox\myboxA
\newsavebox\myboxB
\newlength\mylenA

\newcommand*\xar[2][0.7]{%
    \sbox{\myboxA}{$\m@th#2$}%
    \setbox\myboxB\null
    \ht\myboxB=\ht\myboxA%
    \dp\myboxB=\dp\myboxA%
    \wd\myboxB=#1\wd\myboxA
    \sbox\myboxB{$\m@th\overline{\copy\myboxB}$}
    \setlength\mylenA{\the\wd\myboxA}
    \addtolength\mylenA{-\the\wd\myboxB}%
    \ifdim\wd\myboxB<\wd\myboxA%
       \rlap{\hskip 0.5\mylenA\usebox\myboxB}{\usebox\myboxA}%
    \else
        \hskip -0.5\mylenA\rlap{\usebox\myboxA}{\hskip 0.5\mylenA\usebox\myboxB}%
    \fi}
\makeatother

\makeatletter
\newsavebox\myboxC
\newsavebox\myboxD
\newlength\mylenC

\newcommand*\yar[2][0.7]{%
    \sbox{\myboxC}{$\m@th#2$}%
    \setbox\myboxD\null
    \ht\myboxD=\ht\myboxC%
    \dp\myboxD=\dp\myboxC%
    \wd\myboxD=#1\wd\myboxC
    \sbox\myboxD{$\m@th\underline{\copy\myboxD}$}
    \setlength\mylenC{\the\wd\myboxC}
    \addtolength\mylenC{-\the\wd\myboxD}%
    \ifdim\wd\myboxD<\wd\myboxC%
       \rlap{\hskip 0.5\mylenC\usebox\myboxD}{\usebox\myboxC}%
    \else
        \hskip -0.5\mylenC\rlap{\usebox\myboxC}{\hskip 0.5\mylenC\usebox\myboxD}%
    \fi}
\makeatother

\makeatletter
\newcommand{\drawhypgeodesic}[5][]{
  
  \pgfmathsetmacro{\xone}{#2}
  \pgfmathsetmacro{\yone}{#3}
  \pgfmathsetmacro{\xtwo}{#4}
  \pgfmathsetmacro{\ytwo}{#5}
  \pgfmathsetmacro{\denom}{\xone*\ytwo - \xtwo*\yone}

  \ifdim\denom pt < 0.00001pt
    \ifdim\denom pt > -0.00001pt
      \draw[#1] (#2,#3) -- (#4,#5);
      \def\isLine{1}
    \else
      \def\isLine{0}
    \fi
  \else
    \def\isLine{0}
  \fi
  
  \ifnum\isLine=0

    \pgfmathsetmacro{\mone}{1 + \xone*\xone + \yone*\yone}
    \pgfmathsetmacro{\mtwo}{1 + \xtwo*\xtwo + \ytwo*\ytwo}
    \pgfmathsetmacro{\dd}{(\yone*\mtwo - \ytwo*\mone)/\denom}
    \pgfmathsetmacro{\ee}{(\xtwo*\mone - \xone*\mtwo)/\denom}
    \pgfmathsetmacro{\cx}{-\dd/2}
    \pgfmathsetmacro{\cy}{-\ee/2}
    \pgfmathsetmacro{\rr}{sqrt(\cx*\cx + \cy*\cy - 1)}
    \pgfmathsetmacro{\thetaone}{atan2(\yone-\cy, \xone-\cx)}
    \pgfmathsetmacro{\thetatwo}{atan2(\ytwo-\cy, \xtwo-\cx)}   
    \pgfmathsetmacro{\dtheta}{\thetatwo - \thetaone}
    \pgfmathsetmacro{\dtheta}{\dtheta > 180 ? \dtheta - 360 : \dtheta}
    \pgfmathsetmacro{\dtheta}{\dtheta <= -180 ? \dtheta + 360 : \dtheta}
    
    \draw[#1] (#2, #3) arc (\thetaone : \thetaone + \dtheta : \rr);
  \fi
}
\makeatother

\newtheorem{theorem}{Theorem}

\newtheorem{proposition}[theorem]{Proposition}
\newtheorem*{theorem*}{Theorem}

\theoremstyle{definition}
\newtheorem*{definition*}{Definition}
\newtheorem*{case*}{Case}
\newtheorem*{subcase*}{Subcase}

\theoremstyle{remark}

\numberwithin{equation}{section}

\begin{document}

\maketitle

\begin{abstract}
Tilings of a surface of negative Euler characteristic by $n$-gons with $n\ge 7$ is a finite problem. We develop the algorithm for finding all the tilings for fixed number of tiles and present the calculation for tilings of surfaces of small genus by two tiles. We also discuss the number of distinct edge lengths in multiple tile tilings.
\end{abstract}

\section{Introduction}

Consider an edge-to-edge tiling of a closed and connected surface of Euler number $\chi<0$ by congruent $n$-gons, such that $n\ge 7$ and all vertices have degree $\ge 3$. In \cite{lwwy1}, we showed that the following
\[
\frac{-2\chi}{n-2}<f\le \frac{-6\chi}{n-6};
\quad
n\le\begin{cases}
3(2-\chi), &\text{odd }n \\
6(1-\chi), &\text{even }n 
\end{cases}.
\]
Therefore for fixed $\chi$, finding all tilings is a finite problem. 

In \cite{lwwy1}, we developed the algorithm for single tile tilings, i.e., with $f=1$. We found all the single tile tilings for surfaces of small genus, and also discussed the geometric existence of such tilings. For the special case of single tile tilings of $2{\bb T}^2$, we compared our findings with the works of Zamorzaeva-Orleanschi \cite{zamo1,zamo2}, and the corrected the mistakes in the earlier papers.

In this paper, we extend the algorithm to more than one tiles. Then we specialize the algorithm to two tile tilings, i.e., with $f=2$. We find all two tile tilings for surfaces of small genus, and explicitly present some of these tilings. We find some new phenomenon that did not occur to single tile tilings, including some extra geometrical condition built into the algorithm, and the various number of distinct edge lengths.

A single tile tiling is the same as a planar diagram, which is a pairing of edges of an $n$-gon. Each pairing is either opposing or twisted. A multiple tile tiling is similarly given by a multiple planar diagram, which is a pairing of edges of several $n$-gons. In \cite{lwwy1}, we showed that a planar diagram can be either encoded by the pairing of edges, or by the corner combinations at vertices in the tiling. We provided the formulae for the conversion between the two codes, which shows the equivalence of the two methods. In this paper.

The geometrical conditions from the pairings (same edge length) and vertices (angle sum) are underdetermined for single tile tilings. In principle, different edge pairs can have distinct edge lengths. Therefore we expect all combinatorially valid tilings to be geometrically realisable. However, the geometrical conditions inherent in multiple tile tilings can sometimes be overdetermined. For example, the number of distinct edge lengths can be as small as 1. Moreover, it is possible that the angle sum equalities do not have positive angle solutions. Therefore we add the condition of the existence of positive angle solution to our program. This filters out a big number of geometrically impossible tilings. The number of tilings in this paper is the number that passes this filter.

A tiling of a surface by a single $n$-gon can have as many as $\frac{n}{2}$ distinct edge lengths. In a tiling of a surface by several congruent $n$-gons, it is possible to have all $n$ edges having distinct edge lengths. In the final section, we give necessary condition for this to happen. Moreover, for two tile tilings, we give necessary and sufficient condition for a surface to have a tiling by two congruent $n$-gons, such that all $n$ edge lengths are distinct.

\section{Tiling of Surface by Congruent Tiles}
\label{tiling}

An edge-to-edge tiling ${\mc T}$ of a surface consists of polygons $T_1,T_2,\dots,T_f$ (called {\em tiles}) of $n_1,n_2,\dots,n_f$ sides. Combinatorially, the tiling is a {\em multiple planar diagram} $D$ that pairs the edges of these polygons. The pairing implies that $n_1+n_2+\dots+n_f=2m$ is an even number. 

Similar to the usual planar diagrams in \cite{lwwy1, zamo1, zamo2}, two edges in a multiple planar diagram can be paired in two ways, corresponding to the two ways the end points are identified. A multiple planar diagram $D$ specifies one of the two ways for each edge pair. As long as we glue all the edge pairs together in ways specified in $D$, we get a surface $S_D$.

We also require the surface $S_D$ to be connected. This means that, any two tiles $T_p$ and $T_q$ are related by a sequence of tiles in between, such that the adjacent tiles have edges that are paired in $D$. A multiple planar diagram with this property is a {\em connected multiple planar diagram}.

In this paper, we assume all tiles are congruent to a prototile $n$-gon $P$. Then $nf=2m$ is an even number. 

In Figure \ref{prototile}, we label the corners of $P$ circularly by $i\in {\bb Z}_n$. We also denote the edge connecting $i$ to $i+1$ by $\bar{i}$. The label gives an orientation of $P$. The congruence to $P$ translates the labels and the orientation to all the tiles. The corners and edges of $T_p$ are $i_p$ and $\bar{i}_p$, for all $i\in {\bb Z}_n$. 

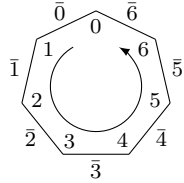
\begin{figure}[htp]
\centering
\begin{tikzpicture}[>=latex,scale=1]

\foreach \a in {0,...,6}
{
\draw
	(38.57+51.43*\a:1) -- (90+51.43*\a:1);

\node at (90+51.43*\a:0.8) {\footnotesize \a};
\node at (115.7+51.43*\a:1.1) {\footnotesize $\bar{\a}$};

}

\draw[->]
	(120:0.6) arc (120:420:0.6);
		
\end{tikzpicture}
\caption{Corners and edges of a prototile heptagon.} 
\label{prototile}
\end{figure}

A multiple planar diagram $D$ is an unordered pairing in the set of all edges of all tiles
\begin{equation}\label{edges}
E=\{\bar{i}_p\colon i_p\in {\bb Z}_n, \; p=1,2\dots,f\}
={\bb Z}_n\times \{1,2,\dots,f\}.
\end{equation}
Moreover, we add a sign $\sigma=\pm 1$ to each (unordered) edge pair $\bar{i}_p\bar{j}_q$ to indicate one of the two ways the edges are glued together. Figure \ref{dtov} shows the meaning of the sign:
\begin{itemize}
\item $\sigma=+$: The {\em opposing} pair on the left of Figure \ref{dtov}. This means the tiles $T_p$ and $T_q$ have the same orientation with respect to the edge pair. 
\item $\sigma=-$: The {\em twisted} pair on the right of Figure \ref{dtov}. This means the tiles $T_p$ and $T_q$ have different orientation with respect to the edge pair. 
\end{itemize}
The lower row is the flip of the upper row. They represent the same glueing of a pair of edges.
 
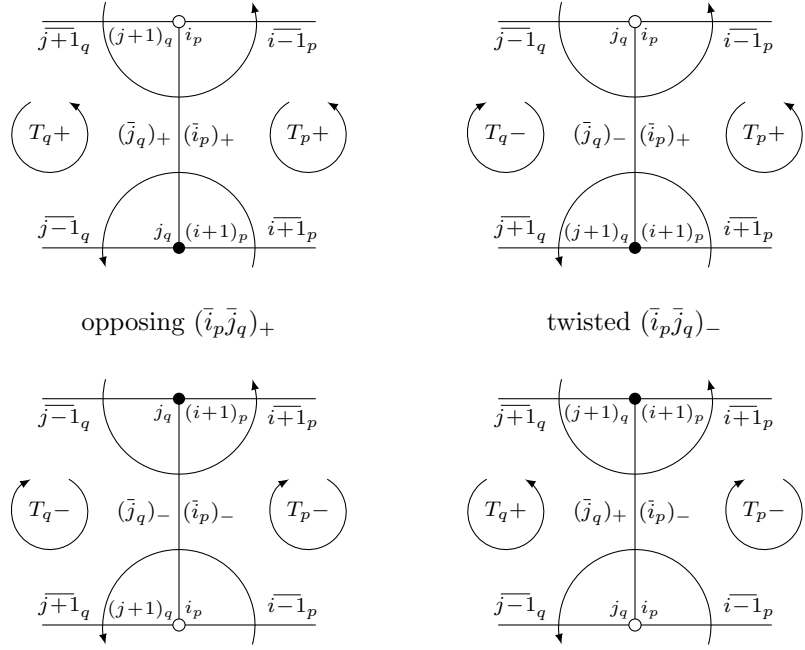
\begin{figure}[htp]
\centering
\begin{tikzpicture}[>=latex,scale=1]

\foreach \a in {1,-1}
\foreach \b in {1,-1}
{
\begin{scope}[shift={(3*\a cm, 2.5*\b cm)}, yscale=\b]

\draw
	(0,-1.5) -- (0,1.5)
	(-1.8,-1.5) -- (1.8,-1.5)
	(-1.8,1.5) -- (1.8,1.5);

\fill 
	(0,-1.5) circle (0.08);
\filldraw[fill=white]
	(0,1.5) circle (0.08);

\end{scope}

\begin{scope}[shift={(3*\a cm, 2.5*\b cm)}]

\draw[yshift=1.5 cm, <-]
	(15:1) arc (15:-195:1);
	
\draw[yshift=-1.5 cm, ->]
	(-15:1) arc (-15:195:1);

\end{scope}	
}

\foreach \a in {1,-1}
{
\begin{scope}[shift={(3*\a cm, 2.5 cm)}]

\draw[xshift=1.7 cm, ->]
	(120:0.5) arc (120:420:0.5);

\node at (0.4,0) {\footnotesize $(\xar{i}_p)_+$};
\node at (1.7,0) {\footnotesize $T_p+$};

\node at (1.5,-1.25) {\footnotesize $\xar{i\!+\!1}_p$};
\node at (1.5,1.25) {\footnotesize $\xar{i\!-\!1}_p$};

\node at (0.2,1.3) {\scriptsize $i_p$};
\node at (0.5,-1.3) {\scriptsize $(i\!+\!1)_p$};

\end{scope}

\begin{scope}[shift={(3*\a cm, -2.5 cm)}]
	
\draw[xshift=1.7 cm, <-]
	(120:0.5) arc (120:420:0.5);

\node at (0.4,0) {\footnotesize $(\xar{i}_p)_-$};
\node at (1.7,0) {\footnotesize $T_p-$};

\node at (1.5,1.25) {\footnotesize $\xar{i\!+\!1}_p$};
\node at (1.5,-1.25) {\footnotesize $\xar{i\!-\!1}_p$};

\node at (0.2,-1.3) {\scriptsize $i_p$};
\node at (0.5,1.3) {\scriptsize $(i\!+\!1)_p$};

\end{scope}
}

\foreach \a in {1,-1}
{
\begin{scope}[shift={(3*\a cm, -2.5*\a cm)}]

\draw[xshift=-1.7 cm, ->]
	(120:0.5) arc (120:420:0.5);
	
\node at (-1.7,0) {\footnotesize $T_q+$};
\node at (-0.45,0) {\footnotesize $(\xar{j}_q)_+$};

\node at (-1.5,-1.25) {\footnotesize $\xar{j\!-\!1}_q$};
\node at (-1.5,1.25) {\footnotesize $\xar{j\!+\!1}_q$};

\node at (-0.2,-1.3) {\scriptsize $j_q$};
\node at (-0.5,1.3) {\scriptsize $(j\!+\!1)_q$};

\end{scope}
}

\foreach \a in {1,-1}
{
\begin{scope}[shift={(3*\a cm, 2.5*\a cm)}]

\draw[xshift=-1.7 cm, <-]
	(120:0.5) arc (120:420:0.5);
	
\node at (-1.7,0) {\footnotesize $T_q-$};
\node at (-0.45,0) {\footnotesize $(\xar{j}_q)_-$};

\node at (-1.5,1.25) {\footnotesize $\xar{j\!-\!1}_q$};
\node at (-1.5,-1.25) {\footnotesize $\xar{j\!+\!1}_q$};

\node at (-0.2,1.3) {\scriptsize $j_q$};
\node at (-0.5,-1.3) {\scriptsize $(j\!+\!1)_q$};

\end{scope}
}

\node at (-3,0) {opposing $(\bar{i}_p\bar{j}_q)_+$};
\node at (3,0) {twisted $(\bar{i}_p\bar{j}_q)_-$};

\end{tikzpicture}
\caption{Edge pair implies adjacent corners at vertices.} 
\label{dtov}
\end{figure}

An edge pair implies adjacent corners at two vertices $\bullet$ and $\circ$. The upper left shows that an opposing pair $(\bar{i}_p\bar{j}_q)_+$ first splits into two ordered pairs $(\bar{i}_p)_+(\bar{j}_q)_+ $ and $(\bar{j}_q)_+(\bar{i}_p)_+$, and then imply adjacent corners 
\begin{equation}\label{dtov_eq1+}
(\bar{i}_p\bar{j}_q)_+
\implies
\begin{cases}
(\bar{i}_p)_+(\bar{j}_q)_+ 
&\implies \bullet=((i+1)_p)_+(j_q)_+\cdots \\
(\bar{j}_q)_+(\bar{i}_p)_+
&\implies \circ=((j+1)_q)_+(i_p)_+\cdots
\end{cases}.
\end{equation}
Here we add the subscript $+$ to all the edges and corners to keep track of the counterclockwise orientations of the two tiles. The pairing is equivalently described by the lower left, which shows that $(\bar{i}_p\bar{j}_q)_+$ can also split into two ordered pairs $(\bar{j}_q)_-(\bar{i}_p)_-$ and $(\bar{i}_p)_-(\bar{j}_q)_-$, and then equivalently imply adjacent corners 
\begin{equation}\label{dtov_eq1-}
(\bar{i}_p\bar{j}_q)_+
\implies
\begin{cases}
(\bar{j}_q)_-(\bar{i}_p)_-
&\implies \bullet=(j_q)_-((i+1)_p)_-\cdots \\
(\bar{i}_p)_-(\bar{j}_q)_-
&\implies \circ=(i_p)_-((j+1)_q)_-\cdots 
\end{cases}.
\end{equation}

The right of Figure \ref{dtov} shows two ways of drawing a twisted pair $(\bar{i}_p\bar{j}_q)_-$. The pair can split into $(\bar{i}_p)_+(\bar{j}_q)_-$, $(\bar{j}_q)_-(\bar{i}_p)_+$, $(\bar{j}_q)_+(\bar{i}_p)_-$, $(\bar{i}_p)_-(\bar{j}_q)_+$, and then imply adjacent corners
\begin{equation}\label{dtov_eq2}
(\bar{i}_p\bar{j}_q)_-
\implies
\begin{cases}
(\bar{i}_p)_+(\bar{j}_q)_-
&\implies \bullet=((i+1)_p)_+((j+1)_q)_-\cdots \\
(\bar{j}_q)_-(\bar{i}_p)_+
&\implies \circ=(j_q)_-(i_p)_+\cdots \\
(\bar{j}_q)_+(\bar{i}_p)_-
&\implies \bullet=((j+1)_q)_+((i+1)_p)_-\cdots \\
(\bar{i}_p)_-(\bar{j}_q)_+
&\implies \circ=(i_p)_-(j_q)_+\cdots
\end{cases}. 
\end{equation}

The formulae \eqref{dtov_eq1+}, \eqref{dtov_eq1-}, \eqref{dtov_eq2} are illustrated in Figure \ref{p2v}. The edge pairs along the boundaries are connected by chords. The solid chords mean opposing pairs, and the dashed chords mean twisted pairs. We have vertices $\bullet$ and $\circ$, with decorations by $\sigma=\pm$. The gray arrows describe the adjacent corners at vertices induced by the edge pairs.

\begin{figure}[htp]
\centering
\begin{tikzpicture}[>=latex,scale=1]


\begin{scope}[shift={(-6,0)}]

\draw
	(50:1.2) -- (80:1.2)
	(150:1.2) -- (180:1.2)
	(65:1.16) to[out=245, in=-15] (165:1.16);

\draw[->]
	(120:0.2) arc (120:420:0.2);
	
\draw[dashed]
	(80:1.2) arc (80:150:1.2)
	(180:1.2) arc (180:410:1.2);

\draw[gray!70, very thick, <-]
	(50:1.1) to[out=180+40, in=5] (180:1.05);

\draw[gray!70, very thick, <-]
	(80:1.1) to[out=180+70, in=180+160] (150:1.05);

\node at (0,0) {\scriptsize $T_p$}; 
	
\end{scope}


\begin{scope}[shift={(-6,-3)}]

\draw
	(50:1.2) -- (80:1.2)
	(150:1.2) -- (180:1.2);

\draw[dashed]
	(80:1.2) arc (80:150:1.2)
	(180:1.2) arc (180:410:1.2);

\draw[dash pattern=on 2pt off 1pt]
	(65:1.16) to[out=245, in=-15] (165:1.16);

\draw[gray!70, very thick, <-]
	(50:1.1) to[out=180+50, in=180+150] (150:1.05);

\draw[gray!70, very thick, ->]
	(180:1.05) to[out=0, in=180+80] (80:1.1);

\draw[->]
	(120:0.2) arc (120:420:0.2);

\node at (0,0) {\scriptsize $T_p$}; 
	
\end{scope}

\foreach \a in {-1,1}
\foreach \y in {0,1}
{
\begin{scope}[shift={(-2*\a, -3*\y)}, scale=\a]

\draw
	(20:1.2) -- (50:1.2)
	(-20:1.2) -- (-50:1.2);
	
\draw[dashed]
	(-20:1.2) arc (-20:20:1.2)
	(50:1.2) arc (50:310:1.2);
	
\end{scope}
}

\foreach \y in {0,1}
\node at (-2,-3*\y) {\scriptsize $T_p$}; 

\foreach \y in {0,1}
\node at (2,-3*\y) {\scriptsize $T_q$}; 


\draw
	(-1.05,0.66) to[out=35, in=145] (1.05,0.66);

\draw[dash pattern=on 2pt off 1pt]
	(-1.05,-0.66) to[out=-35, in=-145] (1.05,-0.66);
	
\begin{scope}[gray!70, very thick]

\draw[->]
	(-0.75,0.5) to[out=30, in=150] (0.8,0.5);

\draw[->]
	(-1.05,0.95) to[out=30, in=150] (1.1,0.95);

\draw[->]
	(-0.75,-0.5) to[out=-40, in=200] (1.1,-0.95);

\draw[->]
	(-1.05,-0.95) to[out=-20, in=220] (0.8,-0.5);

\end{scope}
			
\foreach \x in {1,-1}
\draw[shift={(2*\x,0)}, ->]
	(120:0.2) arc (120:420:0.2);


\begin{scope}[yshift=-3cm]

\draw
	(-1.05,0.66) to[out=35, in=145] (1.05,0.66);

\draw[dash pattern=on 2pt off 1pt]
	(-1.05,-0.66) to[out=-35, in=-145] (1.05,-0.66);

\begin{scope}[gray!70, very thick]

\draw[->]
	(-0.75,-0.5) to[out=-30, in=-150] (0.8,-0.5);

\draw[->]
	(-1.05,-0.95) to[out=-30, in=-150] (1.1,-0.95);

\draw[->]
	(-0.75,0.5) to[out=40, in=160] (1.1,0.95);

\draw[->]
	(-1.05,0.95) to[out=20, in=140] (0.8,0.5);
	
\end{scope}	

\draw[shift={(-2,0)}, ->]
	(120:0.2) arc (120:420:0.2);

\draw[shift={(2,0)}, <-]
	(120:0.2) arc (120:420:0.2);
	
\end{scope}


\foreach \a/\x/\y in 
	{180/-1/0, 50/-1/0, 180/-1/1, -20/0/0, -20/0/1}
{
\begin{scope}[shift={(-2+4*\x, -3*\y)}, shift={(\a:1.2)}]

\filldraw[fill=white]
	(0,0) circle (0.1);

\draw
	(-0.1,0) -- (0.1,0)
	(0,-0.1) -- (0,0.1);
	
\end{scope}
} 


\foreach \a/\x/\y in 
	{80/-1/1, 50/0/0, 130/1/0, 230/1/0, 50/0/1, 160/1/1, 200/1/1}
{
\begin{scope}[shift={(-2+4*\x, -3*\y)}, shift={(\a:1.2)}]

\filldraw[fill=white]
	(0,0) circle (0.1);

\draw
	(-0.1,0) -- (0.1,0);
	
\end{scope}
} 


\foreach \a/\x/\y in 
	{50/-1/1, 20/0/0, 160/1/0, 200/1/0, 20/0/1, 130/1/1, 230/1/1}
{
\begin{scope}[shift={(-2+4*\x, -3*\y)}, shift={(\a:1.2)}]

\fill
	(0,0) circle (0.1);

\draw[white]
	(-0.1,0) -- (0.1,0)
	(0,-0.1) -- (0,0.1);
	
\end{scope}
} 


\foreach \a/\x/\y in {150/-1/0, 80/-1/0, 150/-1/1, -50/0/0, -50/0/1}
{
\begin{scope}[shift={(-2+4*\x, -3*\y)}, shift={(\a:1.2)}]

\fill
	(0,0) circle (0.1);

\draw[white]
	(-0.1,0) -- (0.1,0);
	
\end{scope}
} 
	
\end{tikzpicture}
\caption{Planar diagram to vertices.} 
\label{p2v}
\end{figure}
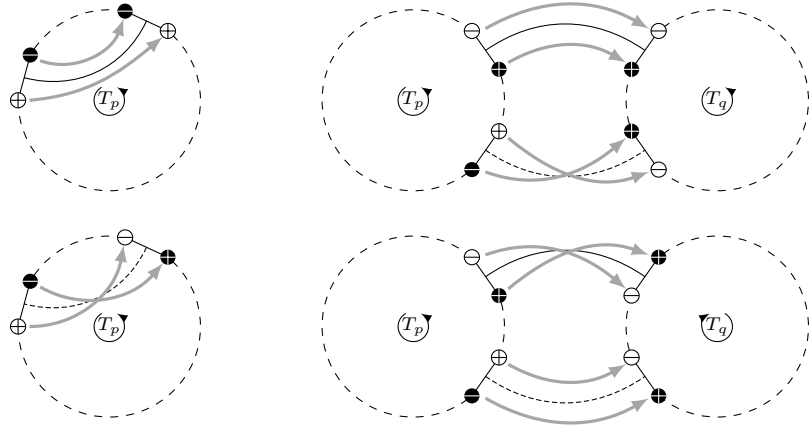

The left two pictures describe the edge pairs in the same tile $T_p$. The right two pictures describe the edge pairs between different tiles $T_p$ and $T_q$. In the upper right, the two tiles have the same counterclockwise orientation. In the lower right, the two tiles have different orientations.

The adjacent corners at vertices combine to give vertices. A vertex is  a {\em circularly ordered subset} of all corners of all tiles
\begin{equation}\label{corners}
C=\{i_p\in {\bb Z}_n, \; p=1,2\dots,f\}
={\bb Z}_n\times \{1,2,\dots,f\},
\end{equation}
with each corner decorated by $\sigma=\pm$. By circularly ordered, we mean that rotation of the circular order and preserving all $\sigma$, or the reversion of the circular order and changing all $\sigma$ to $-\sigma$, represent the same vertex. For example, we have
\[
(0_1)_+(5_2)_-(6_1)_-(4_2)_+(4_1)_+
\overset{\text{rotation}}{=\joinrel=\joinrel=\joinrel=\joinrel=}
(6_1)_-(4_2)_+(4_1)_+(0_1)_+(5_2)_- 
\overset{\text{reversion}}{=\joinrel=\joinrel=\joinrel=\joinrel=}
(5_2)_+(0_1)_-(4_1)_-(4_2)_-(6_1)_+.
\]

We may summarise the formulae \eqref{dtov_eq1+}, \eqref{dtov_eq1-}, \eqref{dtov_eq2} from edge pairs to adjacent corners as the following
\begin{align*}
(i_p)_+\cdots 
&\overset{\scriptsize (\overline{i-1}_p)_+(\bar{j}_q)_{\sigma}}{=\joinrel=\joinrel=\joinrel=\joinrel=\joinrel=\joinrel=}
\begin{cases}
(i_p)_+(j_q)_+\cdots, &
\text{if }\sigma=+ \\
(i_p)_+((j+1)_q)_-\cdots, &
\text{if }\sigma=-
\end{cases},  \\
(i_p)_-\cdots 
&\overset{\scriptsize (\bar{i}_p)_-(\bar{j}_q)_{\sigma}}{=\joinrel=\joinrel=\joinrel=\joinrel=\joinrel=\joinrel=}
\begin{cases}
(i_p)_-(j_q)_+\cdots, &
\text{if }\sigma=+ \\
(i_p)_-((j+1)_q)_-\cdots, &
\text{if }\sigma=-
\end{cases}. 
\end{align*}
For example, the double planar diagram illustrated in Figure \ref{eg1}
\begin{equation}\label{dpd1}
D=((\bar{0}_1\bar{3}_1)_+,
(\bar{6}_2\bar{2}_2)_+,
(\bar{2}_1\bar{0}_2)_-,
(\bar{4}_1\bar{3}_2)_+,
(\bar{6}_1\bar{4}_2)_-,
(\bar{1}_1\bar{1}_2)_-,
(\bar{5}_1\bar{5}_2)_+)
\end{equation}
implies vertices
\begin{align*}
(0_1)_+\cdots
&\overset{(\bar{6}_1)_+(\bar{4}_2)_-}{=\joinrel=\joinrel=\joinrel=\joinrel=}
(0_1)_+(5_2)_-\cdots
\overset{(\bar{5}_2)_-(\bar{5}_1)_-}{=\joinrel=\joinrel=\joinrel=\joinrel=}
(0_1)_+(5_2)_-(6_1)_-\cdots 
\overset{(\bar{6}_1)_-(\bar{4}_2)_+}{=\joinrel=\joinrel=\joinrel=\joinrel=}
(0_1)_+(5_2)_-(6_1)_-(4_2)_+\cdots \\
&\overset{(\bar{3}_2)_+(\bar{4}_1)_+}{=\joinrel=\joinrel=\joinrel=\joinrel=}
(0_1)_+(5_2)_-(6_1)_-(4_2)_+(4_1)_+\cdots 
\overset{(\bar{3}_1)_+(\bar{0}_1)_+}{=\joinrel=\joinrel=\joinrel=\joinrel=}
(0_1)_+(5_2)_-(6_1)_-(4_2)_+(4_1)_+, \\
(0_2)_+\cdots
&\overset{(\bar{6}_2)_+(\bar{2}_2)_+}{=\joinrel=\joinrel=\joinrel=\joinrel=}
(0_2)_+(2_2)_+\cdots
\overset{(\bar{1}_2)_+(\bar{1}_1)_-}{=\joinrel=\joinrel=\joinrel=\joinrel=}
(0_2)_+(2_2)_+(2_1)_-\cdots 
\overset{(\bar{2}_1)_-(\bar{0}_2)_+}{=\joinrel=\joinrel=\joinrel=\joinrel=}
(0_2)_+(2_2)_+(2_1)_-.
\end{align*}
We similarly get the other two vertices $(1_1)_+(3_1)_+(1_2)_-$, $(3_2)_+(6_2)_+(5_1)_+$, and the vertex set
\[
V =((0_1)_+(5_2)_-(6_1)_-(4_2)_+(4_1)_+,\; 
(0_2)_+(2_2)_+(2_1)_-,
(1_1)_+(3_1)_+(1_2)_-,\; (3_2)_+(6_2)_+(5_1)_+).
\]
Therefore the surface has Euler characteristic $4-7+2=-1$. This implies the surface is $S_D=3{\bb P}^2$. 

Figure \ref{eg1} shows two equivalent ways of drawing the double planar diagram \eqref{dpd1}. The solid chords indicate opposing edge pairs, and the dashed chords indicate twisted edge pairs. On the left, both tiles have the same orientation. On the right, the two tiles have different orientation. The directed thick gray paths are obtained by applying the rule in Figure \ref{p2v}, and represent one vertex $(0_1)_+(5_2)_-(6_1)_-(4_2)_+(4_1)_+$ of the tiling.

\vspace{-1cm}

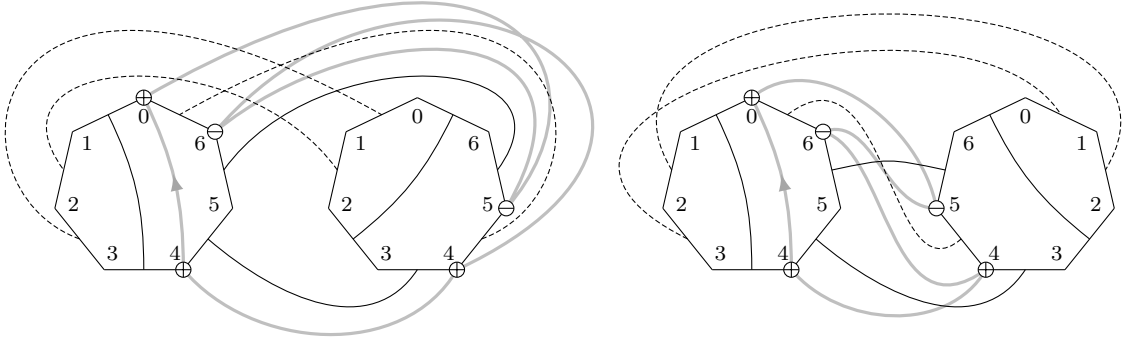
\begin{figure}[htp]
\centering
\begin{tikzpicture}[>=latex,scale=1]

\foreach \a in {0,...,6}
\foreach \u in {1,-1}
\foreach \x in {0,1}
\draw[shift={(1.8*\u+8*\x, 0)}]
	(38.57+51.43*\a:1.2) -- (90+51.43*\a:1.2);

\foreach \a in {0,...,6}
\foreach \x in {-1.8,1.8,6.2}
\node[xshift=\x cm] at (90+51.43*\a:0.95) {\footnotesize \a};

\foreach \a in {0,...,6}
\node[xshift=9.8 cm] at (90-51.43*\a:0.95) {\footnotesize \a};

\foreach \a in {0,...,6}
\foreach \x in {0,1}
{

\coordinate (A\a\x) at ([shift={(-1.8cm+8*\x cm, 0 cm)}] 115.71+51.43*\a:1.08);
\coordinate (B\a\x) at ([shift={(1.8cm+8*\x cm, 0 cm)}] 64.28-51.43*\a:1.08);

\coordinate (X\a\x) at
	([shift={(-1.8cm+8*\x cm, 0 cm)}] 90+51.43*\a:1.2);
\coordinate (Y\a\x) at
	([shift={(1.8cm+8*\x cm, 0 cm)}] 90+51.43*\a:1.2);
}


\draw[gray!50, very thick]
	(X00) to[out=-64.28, in=90] 
	(X40) to[out=-50, in=-120] 
	(Y40) .. controls (6.7,1) and (2,4.1) ..
	(X60) .. controls (1,2.3) and (4.5,2.4) .. 
	(Y50) .. controls (4.5,1.5) and (3,4) .. 
	(X00);

\draw[gray!70, very thick, xshift=-1.8 cm, ->]
	(12.85:0.42) -- ++(102.85:0.1);

\foreach \a/\b/\x in {0/3/0, 0/3/1}
\draw
	(A\a\x) to[out=295.71+51.43*\a, in=295.71+51.43*\b] (A\b\x);
	
\foreach \a/\b/\x in {0/4/0, 2/6/1}
\draw
	(B\a\x) to[out=244.28-51.43*\a, in=244.28-51.43*\b] (B\b\x);

\draw
	(A50) .. controls (0.5,2) and (4,1.8) .. (B10)
	(A40) .. controls (0,-1.5) and (1.2,-2) .. (B30);
	
\draw[dash pattern=on 2pt off 1pt]
	(B50) .. controls (-0.5,2) and (-4,1.8) .. (A10)
	(A20) .. controls (-4.5,0) and (-4,4) .. (B60)
	(B20) .. controls (4.5,0) and (4,4) .. (A60);
	
\foreach \a/\x in {0/-1, 4/-1, 4/1}
{
\begin{scope}[xshift=1.8*\x cm, shift={(90+51.43*\a:1.2)}]

\filldraw[fill=white]
	(0,0) circle (0.1);

\draw
	(-0.1,0) -- (0.1,0)
	(0,-0.1) -- (0,0.1);
	
\end{scope}
} 

\foreach \a/\x in {5/1, 6/-1}
{
\begin{scope}[xshift=1.8*\x cm, shift={(90+51.43*\a:1.2)}]

\filldraw[fill=white]
	(0,0) circle (0.1);

\draw
	(-0.1,0) -- (0.1,0);
	
\end{scope}
} 


\begin{scope}[xshift=8cm]

\draw[gray!50, very thick]
	(X01) to[out=-64.28, in=90] 
	(X41) to[out=-50, in=-120] 
	(Y31) .. controls (0,-2) and (0,0.5) ..
	(X61) to[out=20, in=180] 
	(Y21) to[out=100, in=40] 
	(X01);

\draw[gray!70, very thick, xshift=-1.8 cm, ->]
	(12.85:0.42) -- ++(102.85:0.1);

\draw
	(A51) .. controls (0,0.4) .. (B51)
	(A41) .. controls (0,-1.5) and (1.2,-2) .. (B31);
	
\draw[dash pattern=on 2pt off 1pt]
	(A11) .. controls (-4.5,3) and (4.5,3) .. (B11)
	(A21) .. controls (-6,1) and (1,3) .. (B01)
	(A61) .. controls (0,2) and (0,-1.5) .. (B41);
	
\foreach \a/\x in {0/-1, 4/-1, 3/1}
{
\begin{scope}[xshift=1.8*\x cm, shift={(90+51.43*\a:1.2)}]

\filldraw[fill=white]
	(0,0) circle (0.1);

\draw
	(-0.1,0) -- (0.1,0)
	(0,-0.1) -- (0,0.1);
	
\end{scope}
} 

\foreach \a/\x in {2/1, 6/-1}
{
\begin{scope}[xshift=1.8*\x cm, shift={(90+51.43*\a:1.2)}]

\filldraw[fill=white]
	(0,0) circle (0.1);

\draw
	(-0.1,0) -- (0.1,0);
	
\end{scope}
} 

\end{scope}

\end{tikzpicture}
\caption{One vertex of the double planar diagram \eqref{dpd1}.} 
\label{eg1}
\end{figure}

If the Euler number is even, then we need to further check the orientability to determine the surface. In fact, we will discuss the simplified algorithm for tilings of orientable surfaces in the subsequent Section \ref{orientable}. For the double planar diagram in \eqref{dpd1}, we only mention that the different $\pm$ decorations for $(\bar{1}_1\bar{1}_2)_-$ and $(\bar{5}_1\bar{5}_2)_+$ imply that the surface is not orientable. 

We saw a multiple planar diagram $D$ induces a {\em vertex set}. The vertex set is a partition of all corners $C$ in \eqref{corners} into a disjoint union of circularly ordered subsets. Conversely, we may recover the multiple planar diagram from the vertex set, by reversing the formulae
\begin{align*}
(i_p)_+(j_q)_+\cdots &\implies (\overline{i-1}_p\bar{j}_q)_+, &
(i_p)_-(j_q)_+\cdots &\implies (\bar{i}_p\bar{j}_q)_-, \\
(i_p)_+(j_q)_-\cdots &\implies (\overline{i-1}_p\overline{j-1}_q)_-, &
(i_p)_-(j_q)_-\cdots &\implies (\bar{i}_p\overline{j-1}_q)_+.
\end{align*}
For example, the vertex $(0_1)_+(5_2)_-(6_1)_-(4_2)_+(4_1)_+$ implies the edge pairs
\begin{align*}
(0_1)_+(5_2)_-
&\implies (\bar{6}_1\bar{4}_2)_- &
(5_2)_-(6_1)_- 
&\implies (\bar{5}_2\bar{5}_1)_+=(\bar{5}_1\bar{5}_2)_+, \\
(6_1)_-(4_2)_+ 
&\implies (\bar{6}_1\bar{4}_2)_-, & 
(4_2)_+(4_1)_+ 
&\implies (\bar{3}_2\bar{4}_1)_+=(\bar{4}_1\bar{3}_2)_+, \\
(4_1)_+(0_1)_+ 
&\implies (\bar{3}_1\bar{0}_1)_+=(\bar{0}_1\bar{3}_1)_+. &&
\end{align*}

The following is the exact condition for a partition of $C$ by circularly ordered subsets to be a vertex set that comes from a multiple planar diagram. The proposition can be proved by the same argument for Proposition 3 in \cite{lwwy1}, and is omitted here.

\begin{proposition}\label{avs2}
A disjoint union of circularly ordered subsets of the set $C$ of all corners \eqref{corners} is a vertex set if and only if the following are satisfied:
\begin{itemize}
\item Each (circularly ordered) subset has at least three corners.
\item $(i_p)_+((i-1)_p)_+\cdots$ is not a subset.
\item If $(i_p)_+(j_q)_+\cdots$ is a subset, then $((j+1)_q)_+((i-1)_p)_+\cdots$ is a subset. 
\item If $(i_p)_+(j_q)_-\cdots$ is a subset, then $((j-1)_q)_-((i-1)_p)_+\cdots$ is a subset.
\end{itemize}
\end{proposition}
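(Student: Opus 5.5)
We prove the two directions separately, following the argument for the single tile case.

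\emph{Necessity.} Let $D$ be a multiple planar diagram and $V$ its vertex set. Each vertex has degree $\ge 3$ by the standing assumption, which gives the first condition. Every edge pair produces exactly two ordered adjacencies of corners, one at $\bullet$ and one at $\circ$, by \eqref{dtov_eq1+}, \eqref{dtov_eq1-}, \eqref{dtov_eq2}. For the third condition, suppose $(i_p)_+(j_q)_+$ is adjacent in some vertex. By the reverse formulae this adjacency comes from the pair $(\overline{i-1}_p\bar j_q)_+$. By \eqref{dtov_eq1+}, the other endpoint of that edge has the adjacency $((j+1)_q)_+((i-1)_p)_+$. For the fourth condition, suppose $(i_p)_+(j_q)_-$ is adjacent. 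It comes from $(\overline{i-1}_p\,\overline{j-1}_q)_-$, and \eqref{dtov_eq2} gives the adjacency $((j-1)_q)_-((i-1)_p)_+$ at the other endpoint. For the second condition, $(i_p)_+((i-1)_p)_+$ would come from the pair $(\overline{i-1}_p\,\overline{i-1}_p)_+$. This pairs an edge with itself, which a pairing does not allow.

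\emph{Sufficiency.} Given a partition $V$ satisfying the four conditions, define a pairing by applying the reverse formulae to every circularly adjacent couple of corners. Because reversing the circular order and flipping all signs describe the same vertex, each adjacency can be normalized so that its first corner carries $+$. The only cases left are $(i_p)_+(j_q)_+$, giving $(\overline{i-1}_p\bar j_q)_+$, and $(i_p)_+(j_q)_-$, giving $(\overline{i-1}_p\,\overline{j-1}_q)_-$. The reverse formulae are consistent with the reversion equivalence, so this assignment is well defined.

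\begin{itemize}
\item \emph{Every edge lies in some pair.} Each corner $i_p$ appears in exactly one vertex and has two neighbours there. In the orientation where $i_p$ carries $+$, its successor determines a pair containing $\overline{i-1}_p$. Its predecessor, after reversion, determines a pair containing $\bar i_p$. Hence every edge is covered.
\item \emph{The pairs form a genuine pairing with consistent signs.} Conditions three and four say that the adjacency partner of each edge pair, the second endpoint of the glued edge, also occurs in $V$. The two adjacencies then produce the same signed pair. Condition two excludes pairing an edge with itself.
\item \emph{No edge is assigned to two different pairs.} An edge $\bar i_p$ is met only through the two corners $i_p$ and $(i+1)_p$, each of which has exactly one relevant neighbour on the appropriate side. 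So the edge receives exactly one partner.
\end{itemize}

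The resulting $D$ induces $V$, because its vertex-tracing rules reproduce exactly the circular sequences we started from.

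The main obstacle is the bookkeeping in sufficiency: checking carefully that every edge is counted exactly twice, once from each endpoint, across all $\pm$ orientation cases. I would organize this as a bijection. One side is the set of ordered adjacencies in $V$ up to reversion; the other is the set of (edge, endpoint) incidences, of which there are $2nf$ in total. Each edge pair then corresponds to exactly two adjacencies, and conditions three and four guarantee that these two adjacencies match.
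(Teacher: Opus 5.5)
\textbf{There is a gap in the sufficiency direction.} Your necessity direction is fine. Building $D$ from the reverse formulae and using the third and fourth conditions to match the two ends of each edge is also the right strategy; the paper omits the proof and defers to the single-tile case.

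The gap is a false normalization. Reversal sends $(i_p)_-(j_q)_+$ to $(j_q)_-(i_p)_+$, which is again of type $-+$; type $+-$ is likewise stable, and only $--$ becomes $++$. So there are three types of adjacency, not two. The third, $(i_p)_-(j_q)_+\implies(\bar i_p\bar j_q)_-$, is exactly the $\circ$-adjacency of every twisted pair. The third and fourth conditions have only $++$ and $+-$ hypotheses, so nothing in your argument shows that each $-+$ adjacency of $V$ is the partner $((j-1)_q)_-((i-1)_p)_+$ of some $+-$ adjacency $(i_p)_+(j_q)_-$.

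This has three consequences. Your closing claim that $D$ reproduces exactly $V$ is unsupported. In your first bullet, the predecessor adjacency of $(i_p)_+$ can be of type $-+$, which lies outside your own assignment. And your third bullet does not follow as written: the corners $(i+1)_p$ and $i_p$ each supply a candidate partner for $\bar i_p$, and what has to be shown is that the two candidates agree.

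The repair is short. Decorate $i_p$ by $+$. Its successor lies across the end $i$ of $\overline{i-1}_p$, and its predecessor across the end $i$ of $\bar i_p$.

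\begin{itemize}
\item The adjacency at the terminal end of any edge $\bar a_p$ is therefore the successor of $((a+1)_p)_+$. It is of type $++$ or $+-$, so you may define the partner of $\bar a_p$ from it, and this covers every edge.
\item The third or fourth condition then gives an adjacency $((j+1)_q)_+(a_p)_+$ or $((j-1)_q)_-(a_p)_+$. Because $(a_p)_+$ has exactly one predecessor in $V$, this is the adjacency at the other end of $\bar a_p$, and it yields the same signed pair.
\item Hence the partner relation is an involution. It is fixed-point-free by the second condition, or because a vertex cannot repeat a corner.
\item Also every adjacency of $V$, including every $-+$ one, is one of the two adjacencies generated by a pair of $D$. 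So $D$ induces $V$.
\end{itemize}

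Alternatively, count edge ends. Suppose $V$ has $a$, $b$, $c$ adjacencies of types $++$, $+-$, $-+$. The terminal ends give $a+2b=nf$ and the initial ends give $a+2c=nf$. So the injection from $+-$ to $-+$ adjacencies given by the fourth condition is a bijection.
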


Correspondingly, the following are the conditions for edge pairs in a planar diagram
\begin{itemize}
\item No degree one vertex: $(\bar{i}_p\overline{i+1}_p)_+$ is not an edge pair.
\item No degree two vertex in opposing way: If $(\bar{i}_p\bar{j}_q)_+$ is an edge pair, then $(\overline{i+1}_p\overline{j-1}_q)_+$ is not an edge pair.
\item No degree two vertex in twisted way: If $(\bar{i}_p\bar{j}_q)_-$ is an edge pair, then $(\overline{i+1}_p\overline{j+1}_q)_-$ is not an edge pair.
\end{itemize}

Two multiple planar diagrams represent the same tiling if and only if they are related by relabelling the prototile $n$-gon $P$, and relabelling the tiles. 
\begin{itemize}
\item Relabelling the prototile $P$: $c\in {\bb Z}_n$ is a constant
\begin{itemize}
\item Apply $i_p\mapsto (c+i)_p$ to all corners. This is the same as $\bar{i}_p\mapsto \overline{c+i}_p$ for all edges.
\item Apply $i_p\mapsto (c-i)_p$ to all corners. This is the same as $\bar{i}_p\mapsto \overline{c-i-1}_p$ for all edges.
\end{itemize}
\item Relabelling the tiles: For an invertible map $\phi$ of $\{1,2,\dots,r\}$ to itself, apply $i_p\mapsto i_{\phi(p)}$ and $\bar{i}_p\mapsto \bar{i}_{\phi(p)}$.
\end{itemize}
The decoration $\sigma$ is preserved in all cases.

\section{Tiling of Orientable Surface}
\label{orientable}

If the surface is orientable, then the algorithm for multiple planar diagrams and vertex sets can be simplified. We may fix an orientation of the surface, and compare the orientations of the individual tiles with the orientation of the surface. All tiles ${\mc T}$ are divided into two disjoint groups ${\mc T}_+$ and ${\mc T}_-$, where ${\mc T}_+$ consists of tiles of the same orientation as the surface, and ${\mc T}_-$ consists of tiles of different orientation from the surface. The disjoint union ${\mc T}={\mc T}_+\cup{\mc T}_-$ can be used to replace the subscript decorations $\sigma=\pm$. In fact, after relabelling the tiles, we may further assume 
\[
{\mc T}_+=\{T_1,T_2,\dots,T_s\},\quad
{\mc T}_-=\{T_{s+1},T_{s+2},\dots,T_f\}.
\]
Here we allow ${\mc T}_+$ to be empty ($s=0$) or ${\mc T}_-$ to be empty ($s=f$).

Since the surface is orientable, a pair of edges from tiles in the same ${\mc T}_{\pm}$ are opposing. Moreover, a pair of one edge in ${\mc T}_+$ and another edge in ${\mc T}_-$ are twisted. This means the following decorations of the edge pairs
\[
\bar{i}_p\bar{j}_q
=\begin{cases}
(\bar{i}_p\bar{j}_q)_+, &\text{if }1\le p,q\le s \text{ or } s+1\le p,q\le f \\
(\bar{i}_p\bar{j}_q)_-, &\text{if }1\le p\le s\text{ and }s+1\le q\le f
\end{cases}.
\]
Since the edge pair is not ordered, the second line also includes the case $s+1\le p\le f$ and $1\le q\le s$.

In further splitting the edge pair, we may always draw the pictures in Figure \ref{dtov} to reflect the two groups ${\mc T}_+$ and ${\mc T}_-$. In other words, $T_p$ for $1\le p\le s$ are always counterclockwise, and $T_p$ for $s+1\le p\le f$ are always clockwise. This means $i_p$ and $\bar{i}_p$ are always decorated by $\sigma=+$ for $1\le p\le s$ and by $\sigma=-$ for $s+1\le p\le f$. With this understanding, we may omit the decorations by keeping track of the two ranges of $p$. Then we may summarise the formulae \eqref{dtov_eq1+}, \eqref{dtov_eq1-}, \eqref{dtov_eq2} as the following:
\begin{itemize}
\item If $1\le p\le s$, then we find $\overline{i-1}_p\bar{j}_q\in D$, and get
\[
i_p\cdots=\begin{cases}
i_pj_q\cdots, &\text{if }1\le q\le s \\
i_p(j+1)_q\cdots, &\text{if }s+1\le q\le f
\end{cases}.
\]
\item If $s+1\le p\le f$, then we find $\overline{i}_p\bar{j}_q\in D$, and get
\[
i_p\cdots=\begin{cases}
i_pj_q\cdots, &\text{if }1\le q\le s \\
i_p(j+1)_q\cdots, &\text{if }s+1\le q\le f
\end{cases}.
\]
\end{itemize}
Conversely, the formula from vertices to edge pairs is
\[
i_pj_q\cdots \implies \begin{cases}
\overline{i-1}_p\bar{j}_q, 
&\text{if }1\le p,q\le s \\
\overline{i-1}_p\overline{j-1}_q, 
&\text{if }1\le p\le s\text{ and }s+1\le q\le f \\
\bar{i}_p\bar{j}_q, 
&\text{if }s+1\le p\le f\text{ and }1\le q\le s \\
\bar{i}_p\overline{j-1}_q, 
&\text{if }s+1\le p,q\le f
\end{cases}.
\]

Two multiple planar diagrams represent the same tiling if and only if they are related by relabelling the prototile $n$-gon $P$, and relabelling the tiles. Relabelling the prototile means $i\mapsto c+i$ or $i\mapsto c-i$, same as the general case. Relabelling the tiles means the preservation of the disjoint union ${\mc T}={\mc T}_+\cup {\mc T}_-$. Specifically, this is given by an invertible map $\phi$ of $\{1,2,\dots,f\}$ to itself, such that either ${\mc T}_+$ and ${\mc T}_-$ are preserved
\begin{align*}
\phi(\{1,2,\dots,s\})
&=\{1,2,\dots,s\}, \\
\phi(\{s+1,s+2,\dots,f\})
&=\{s+1,s+2,\dots,f\},
\end{align*}
or ${\mc T}_+$ and ${\mc T}_-$ are exchanged
\begin{align*}
\phi(\{1,2,\dots,s\})
&=\{f-s+1,f-s+2,\dots,f\}, \\
\phi(\{s+1,s+2,\dots,f\})
&=\{1,2,\dots,f-s\}.
\end{align*}
Due the the exchange, we may always assume $2s\ge f$. Then exchange happens only for the case $2s=f$.

Consider orientable the double planar diagram 
\begin{equation}\label{eq2}
D=(
\xar{0}_1\xar{2}_1,
\xar{4}_1\xar{6}_1,
\xar{0}_2\xar{2}_2,
\xar{4}_2\xar{6}_2,
\xar{1}_1\xar{1}_2,
\xar{3}_1\xar{3}_2,
\xar{5}_1\xar{5}_2).
\end{equation}
We need to distinguish two cases: $s=2$ and $s=1$. The left of Figure \ref{eg2} are the pictures for the double planar diagram. We have $s=2$ (two tiles having same orientation) in the upper left, and $s=1$ (two tiles having different orientation) in the lower left.

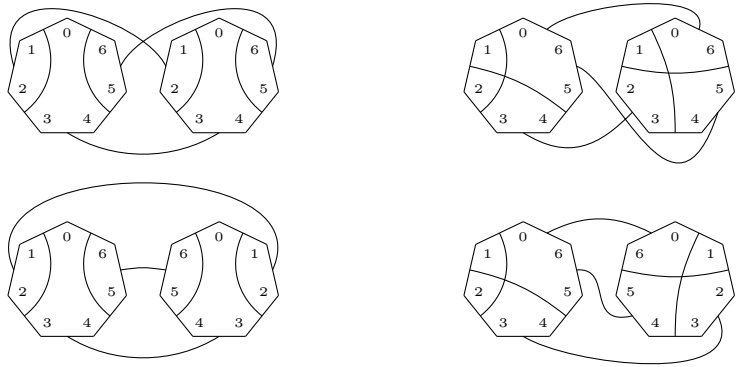
\begin{figure}[htp]
\centering
\begin{tikzpicture}[>=latex,scale=1]

\foreach \a in {0,...,6}
\foreach \u in {1,-1}
\foreach \x in {0,1}
\foreach \y in {0,1}
{
\begin{scope}[shift={(\u cm+6*\x cm, -2.7*\y cm)}]

\draw
	(38.57+51.43*\a:0.8) -- (90+51.43*\a:0.8);

\end{scope}
}

\foreach \a in {0,...,6}
\foreach \x in {0,1}
{

\foreach \u in {1,-1}
\node[shift={(\u cm+6*\x cm, 0 cm)}] at (90+51.43*\a:0.6) {\tiny \a};

\node[shift={(-1 cm+6*\x cm, -2.7 cm)}] at (90+51.43*\a:0.6) {\tiny \a};

\node[shift={(1 cm+6*\x cm, -2.7 cm)}] at (90-51.43*\a:0.6) {\tiny \a};

}

\foreach \a in {0,...,6}
\foreach \x in {0,1}
\foreach \y in {0,1}
{
\coordinate (A\a\x\y) at ([shift={(-1cm+6*\x cm, -2.7*\y cm)}] 115.71+51.43*\a:0.72);
\coordinate (B\a\x\y) at ([shift={(1cm+6*\x cm, -2.7*\y cm)}] 115.71+51.43*\a:0.72);
}


\foreach \a/\b in {0/2, 4/6}
\foreach \y in {0,1}
\draw
	(A\a 0\y) to[out=-64.29+51.43*\a, in=-64.29+51.43*\b] (A\b 0\y)
	(B\a 0\y) to[out=-64.29+51.43*\a, in=-64.29+51.43*\b] (B\b 0\y);
	
\foreach \a/\b in {0/2, 1/4}
\foreach \y in {0,1}
\draw
	(A\a 1\y) to[out=-64.29+51.43*\a, in=-64.29+51.43*\b] (A\b 1\y);

\foreach \a/\b/\y in {0/3/0, 1/5/0, 6/3/1, 1/5/1}
\draw
	(B\a 1\y) to[out=-64.29+51.43*\a, in=-64.29+51.43*\b] (B\b 1\y);

\draw
	(A100) .. controls (-2.1,1.5) and (0,0.8) .. (B100)
	(B500) .. controls (2.1,1.5) and (0,0.8) .. (A500)
	(A300) .. controls (-0.4,-1.1) and (0.4,-1.1) .. (B300);

\draw[shift={(0 cm,-2.7 cm)}]
	(A101) .. controls (-2.4,1.7) and (2.4,1.7) .. (B501)
	(A501) .. controls (0,0.2) .. (B101)
	(A301) .. controls (-0.4,-1.1) and (0.4,-1.1) .. (B301);

\draw[shift={(6 cm,0 cm)}]
	(A610) .. controls (-0.3,1) and (1.5,1.2) .. (B610)
	(A310) .. controls (-0.4,-1.1) and (0,-0.9) .. (B210)
	(A510) .. controls (0.2,0) and (1,-2.2) .. (B410);

\draw[shift={(6 cm,-2.7 cm)}]	
	(A611) .. controls (-0.2,0.9) and (0.2,0.9) .. (B011)	
	(A511) .. controls (0.2,0.2) and (-0.2,-0.6) .. (B211)
	(A311) .. controls (-0.4,-1.1) and (2,-1.4) .. (B411);
			
\end{tikzpicture}
\caption{Two versions of double planar diagrams \eqref{eq2} and \eqref{eq3}.} 
\label{eg2}
\end{figure}

For $s=2$, we have ${\mc T}_+=\{T_1,T_2\}$ and ${\mc T}_-=\emptyset$. There is only one rule to follow
\[
i_p\cdots
\overset{\scriptsize \overline{i-1}_p\bar{j}_q}{=\joinrel=\joinrel=\joinrel=}
i_pj_q\cdots.
\]
By the rule, we get vertices for the double planar diagram \eqref{eq2}
\begin{align*}
0_1\cdots
&\overset{\scriptsize \xar{6}_1\xar{4}_1}{=\joinrel=} 
0_14_1\cdots
\overset{\scriptsize \xar{3}_1\xar{3}_2}{=\joinrel=}
0_14_13_2\cdots 
\overset{\scriptsize \xar{2}_2\xar{0}_2}{=\joinrel=}
0_14_13_20_2\cdots 
\overset{\scriptsize \xar{6}_2\xar{4}_2}{=\joinrel=}
0_14_13_20_24_2\cdots \\
&\overset{\scriptsize \xar{3}_2\xar{3}_1}{=\joinrel=}
0_14_13_20_24_23_1\cdots
\overset{\scriptsize \xar{2}_1\xar{0}_1}{=\joinrel=}
0_14_13_20_24_23_1, \\
1_1\cdots
&\overset{\scriptsize \xar{0}_1\xar{2}_1}{=\joinrel=} 
1_12_1\cdots
\overset{\scriptsize \xar{1}_1\xar{1}_2}{=\joinrel=}
1_12_11_2\cdots 
\overset{\scriptsize \xar{0}_2\xar{2}_2}{=\joinrel=}
1_12_11_22_2\cdots  
\overset{\scriptsize \xar{1}_2\xar{1}_1}{=\joinrel=}
1_12_11_22_2, \\
5_1\cdots
&\overset{\scriptsize \xar{4}_1\xar{6}_1}{=\joinrel=}
5_16_1\cdots 
\overset{\scriptsize \xar{5}_1\xar{5}_2}{=\joinrel=}
5_16_15_2\cdots 
\overset{\scriptsize \xar{4}_2\xar{6}_1}{=\joinrel=}
5_16_15_26_2\cdots 
\overset{\scriptsize \xar{5}_2\xar{5}_1}{=\joinrel=}
5_16_15_26_2.
\end{align*}
We find the tiling has three vertices
\[
V=(0_14_13_20_24_23_1,
1_12_11_22_2,
5_16_15_26_2).
\]
The surface has Euler number $3-7+2=-2$. Since the surface is orientable, we get $S_D=2{\bb T}^2$.

For $s=1$, we have ${\mc T}_+=\{T_1\}$ and ${\mc T}_-=\{T_2\}$. Following the rules
\begin{align*}
i_1\cdots
&\overset{\scriptsize \overline{i-1}_1\bar{j}_1}{=\joinrel=\joinrel=\joinrel=}i_1j_1\cdots, &
i_2\cdots
&\overset{\scriptsize \bar{i}_2\bar{j}_1}{=\joinrel=\joinrel=\joinrel=}i_2j_1\cdots, \\
i_1\cdots
&\overset{\scriptsize \overline{i-1}_1\bar{j}_2}{=\joinrel=\joinrel=\joinrel=}i_1(j+1)_2\cdots,&
i_2\cdots
&\overset{\scriptsize \bar{i}_2\bar{j}_2}{=\joinrel=\joinrel=\joinrel=}i_2(j+1)_2\cdots,
\end{align*}
we get vertices for the double planar diagram \eqref{eq2}
\begin{align*}
0_1\cdots
&\overset{\scriptsize \xar{6}_1\xar{4}_1}{=\joinrel=} 
0_14_1\cdots
\overset{\scriptsize \xar{3}_1\xar{3}_2}{=\joinrel=} 
0_14_14_2\cdots
\overset{\scriptsize \xar{4}_2\xar{6}_2}{=\joinrel=} 
0_14_14_20_2\cdots
\overset{\scriptsize \xar{0}_2\xar{2}_2}{=\joinrel=} 
0_14_14_20_23_2\cdots \\
&\overset{\scriptsize \xar{3}_2\xar{3}_1}{=\joinrel=} 
0_14_14_20_23_23_1\cdots
\overset{\scriptsize \xar{2}_1\xar{0}_1}{=\joinrel=} 
0_14_14_20_23_23_1, \\
1_1\cdots
&\overset{\scriptsize \xar{0}_1\xar{2}_1}{=\joinrel=} 
1_12_1\cdots
\overset{\scriptsize \xar{1}_1\xar{1}_2}{=\joinrel=} 
1_12_12_2\cdots
\overset{\scriptsize \xar{2}_2\xar{0}_2}{=\joinrel=} 
1_12_12_21_2\cdots
\overset{\scriptsize \xar{1}_2\xar{1}_1}{=\joinrel=} 
1_12_12_21_2, \\
5_1\cdots
&\overset{\scriptsize \xar{4}_1\xar{6}_1}{=\joinrel=} 
5_16_1\cdots
\overset{\scriptsize \xar{5}_1\xar{5}_2}{=\joinrel=} 
5_16_16_2\cdots
\overset{\scriptsize \xar{6}_2\xar{4}_2}{=\joinrel=} 
5_16_16_25_2\cdots
\overset{\scriptsize \xar{5}_2\xar{5}_1}{=\joinrel=} 
5_16_16_25_2.
\end{align*}
The tiling has three vertices
\[
V=(0_14_14_20_23_23_1,
1_12_12_21_2,
5_16_16_25_2)
\]
and the surface is again $2{\bb T}^2$.

We may also easily see the vertex sets by following the rules in Figure \ref{p2v}.

We see that, for the same edge pairing, two tiles having the same orientation or different orientations leads to different tilings. In fact, the underlying surface may not even be the same. For example, the right of Figure \ref{eg2} shows the following pairing in the same orientation or different orientations 
\begin{equation}\label{eq3}
D=(
\xar{0}_1\xar{2}_1,
\xar{1}_1\xar{2}_1,
\xar{0}_2\xar{3}_2,
\xar{1}_2\xar{5}_2,
\xar{3}_1\xar{2}_2,
\xar{5}_1\xar{4}_2,
\xar{6}_1\xar{6}_2).
\end{equation}
They have respective vertex sets
\[
(0_13_13_21_26_2,
1_12_14_15_22_25_2,
6_14_20_2),\quad
(0_13_12_25_26_16_21_23_24_12_11_15_14_20_2).
\]
The numbers of vertices are $3$ and $1$, and the surfaces are $2{\bb T}^2$ and $3{\bb T}^2$.

\section{Geometrical Realisation}

Using the codes in Sections \ref{tiling} and \ref{orientable}, we may create computer program to find all the combinatorial tilings of a fixed surface of Euler characteristic $<0$, by a fixed number $f$ of congruent $n$-gons with $n\ge 7$. The program may either start from edge pairings, or vertex sets satisfying Lemma \ref{avs2}. 

In \cite{lwwy1}, we show that it is likely that all combinatorial single tile tilings are geometrically relisable. In other words, the prototiles can be realised by hyperbolic polygons with straight (geodesic) edges. For multiple tile tilings, the geometrical condition is much more complicated. For example, the number of distinct edge lengths in a tiling by a single $n$-gon is $\frac{n}{2}$. However, both $s=2$ and $s=1$ versions of the double planar diagram \eqref{eq2} satisfy
\[
|\bar{0}|=|\bar{2}|,\quad
|\bar{4}|=|\bar{6}|,\quad
[0]+[4]+[3]=[1]+[2]=[5]+[6]=\pi.
\]
Therefore the number of distinct edge lengths is five. In fact, there is no tiling of $2{\bb T}^2$ by two congruent heptagons, such that all seven edges have distinct edge lengths. On the other hand, there are many double tile tilings of $2{\bb T}^2$, such that all edge lengths must be the same, i.e., the prototile is equilateral.

The combinatorial algorithm produces three tilings of $2{\bb T}^2$ by two congruent heptagons, in the second row of Figure \ref{T2A}. However, the double planar diagram in the box
\[
D=(\bar{0}_1\bar{3}_1,\bar{3}_2\bar{6}_2,\bar{1}_1\bar{1}_2,\bar{2}_1\bar{2}_2,\bar{4}_1\bar{4}_2,\bar{5}_1\bar{5}_2,\bar{6}_1\bar{0}_2),\quad s=2
\]
has vertices
\[
(0_10_23_22_11_26_15_24_1,
1_13_12_2,
4_26_25_1).
\]
Then we get angle sum equalities
\[
2[0]+[1]+[2]+[3]+[4]+[5]+[6]
=[1]+[2]+[3]
=[4]+[5]+[6]
=2\pi.
\]
This is clearly a contradiction. 

Therefore we add the condition that the angle sum equalities admit positive angle value solutions in our computer program. Using this program, we get all double tile tilings of $3{\bb P}^2$, $2{\bb T}^2$, $4{\bb P}^2$, and double tile tilings of $3{\bb T}^2$ for $7\le n\le 13$. Table \ref{tiling_number} shows the numbers of such tilings, according to the numbers of distinct edge lengths. For the orientable surfaces $2{\bb T}^2$ and $3{\bb T}^2$, the table further shows two rows of numbers: The first row counts the number of tilings in which two tiles have the same orientation (i.e., $s=2$), and second row counts the number of tilings in which two tiles have different orientation (i.e., $s=1$). We observe that the two rows of numbers are at the similar scale. Moreover, there are only two double tile tilings (by heptagons and octagons) in the table that allow all edge lengths to be distinct. In Section \ref{edge}, we have a detailed discussion about distinct edge lengths.

\begin{table}[htp]
	\centering
	\begin{tabular}{|c|c||c|c|c|c|c|c|c|c|c|c||c|}
	\hline
	\multirow{2}{*}{surface} & \multirow{2}{*}{gon} & \multicolumn{10}{|c||}{number of edge lengths} & \multirow{2}{*}{total} \\
	\cline{3-12}
	& & 10 & 9 & 8 & 7 & 6 & 5 & 4 & 3 & 2 & 1 & \\
\hline \hline
	\multirow{3}{*}{$\begin{matrix}3{\bb P}^2 \\ \chi=-1\end{matrix} $} 
	& $7$ & &&  &  & 1 & 6 & 18 & 85 & 191 & 142 & 443 \\
	\cline{2-13}
	& $8$ & &&  &  & 1 & 6 & 18 & 71 & 158 & 104 & 358 \\
	\cline{2-13}
	& $9$ & &&  &  &  &  &  & 16 & 16 & 16 & 48 \\
	\hline \hline
	\multirow{12}{*}{$\begin{matrix}2{\bb T}^2 \\ \chi=-2\end{matrix} $} & \multirow{2}{*}{7} 
	&  &  &  &  & 3 & 2 & 20 & 49 & 110 & 106 & 290\\
	\cline{3-13}
	&& &  &  &  &  & 8 & 20 & 98 & 115 & 104 & 345\\
	\cline{2-13}
	& \multirow{2}{*}{8} 
	&  &  &  & 2 & 6 & 32 & 26 & 105 & 215 & 208 & 594\\
	\cline{3-13}
	&&  &  &  &  & 6 & 13 & 75 & 155 & 248 & 129 & 626\\
	\cline{2-13}
	& \multirow{2}{*}{9} 
	&  &  &  & 6 & 3 & 27 & 22 & 113 & 302 & 271 & 744\\
	\cline{3-13}
	&&  &  &  &  & 6 & 29 & 54 & 266 & 333 & 239 & 927\\
	\cline{2-13}
	& \multirow{2}{*}{10} 
	&  &  & 3 & 6 & 36 & 10 & 49 & 116 & 272 & 220 & 712\\
	\cline{3-13}
	&&  &  &  &  & 11 & 31 & 97 & 212 & 299 & 139 & 789\\
	\cline{2-13}
	& \multirow{2}{*}{11} 
	&  &  & 4 &  & 11 &  & 9 & 41 & 108 & 97 & 270\\
	\cline{3-13}
	&&  &  &  &  & 6 & 13 & 39 & 137 & 114 & 75 & 384\\
	\cline{2-13}
	& \multirow{2}{*}{12} 
	&  & 3 &  & 16 &  & 4 & 10 & 19 & 41 & 32 & 125\\
	\cline{3-13}
	&&  &  & &  & 15 & 6 & 41 & 38 & 36 & 23 & 159\\
	\hline \hline
	\multirow{6}{*}{$\begin{matrix}4{\bb P}^2 \\ \chi=-2\end{matrix} $} 
	& $7$ & &&  & 1 & 22 & 152 & 725 & 3179 & 6947 & 5542 & 16568 \\
	\cline{2-13}
	& $8$ & && 1 & 12 & 99 & 456 & 1936 & 7269 & 15952 & 12433 & 38158 \\
	\cline{2-13}
	& $9$ & && 1 & 19 & 168 & 609 & 2699 & 10596 & 21192 & 15831 & 51115 \\
	\cline{2-13}
	& $10$ & && 2 & 65 & 309 & 703 & 2857 & 9457 & 16902 & 11910 & 42205 \\
	\cline{2-13}
	& $11$ & && 3 & 57 & 126 & 278 & 1385 & 4513 & 7918 & 5472 & 19752 \\
	\cline{2-13}
	& $12$ & && 22 & 82 & 112 & 174 & 589 & 1323 & 1992 & 1249 & 5543 \\
	\hline \hline
	\multirow{14}{*}{$\begin{matrix}3{\bb T}^2 \\ \chi=-4\end{matrix} $} & \multirow{2}{*}{7}  
	&  &  &  & 1 &  & 17 & 31 & 155 & 254 & 227 & 685\\
	\cline{3-13}
	&&  &  &  &  &  &  & 61 & 137 & 286 & 168 & 652\\
	\cline{2-13}
	& \multirow{2}{*}{8} 
	&  &  & 1 & 3 & 27 & 116 & 339 & 1609 & 2758 & 2351 & 7204\\
	\cline{3-13}
	&&  &  &  &  &  & 159 & 641 & 2144 & 2706 & 1503 & 7153\\
	\cline{2-13}
	& \multirow{2}{*}{9} 
	&  &  & 5 & 8 & 216 & 564 & 3354 & 15386 & 29625 & 22745 & 71903\\
	\cline{3-13}
	&&  &  &  &  & 187 & 669 & 6275 & 17629 & 31812 & 19526 & 76098\\
	\cline{2-13}
	& \multirow{2}{*}{10} 
	&  & 3 & 24 & 219 & 602 & 2572 & 13029 & 55235 & 110039 & 84947 & 266670
\\
	\cline{3-13}
	&&  &  &  & 164 & 540 & 4941 & 25146 & 79656 & 106694 & 56447 & 273588\\
	\cline{2-13}
	& \multirow{2}{*}{11} 
	&  & 25 & 42 & 909 & 1166 & 8117 & 49336 & 197870 & 381457 & 277743 & 916665\\
	\cline{3-13}
	&&  &  & 66 & 323 & 2539 & 12469 & 89248 & 249183 & 403063 & 240664 & 997555\\
	\cline{2-13}
	& \multirow{2}{*}{12} 
	& 11 & 81 & 702 & 1171 & 4084 & 18091 & 95665 & 353939 & 677906 & 491760 & 1643410\\
	\cline{3-13}
	&&  & 26 & 141 & 1157 & 4313 & 35527 & 168947 & 490930 & 648491 & 339986 & 1689518\\
 \cline{2-13}
	& \multirow{2}{*}{13} 
	& 65 & 76 & 1656 & 916 & 5521 & 29773 & 152878 & 575257 & 1027213 & 709101 & 2502456\\
	\cline{3-13}
	&&  & 34 & 349 & 1264 & 9504 & 46186 & 277710 & 711624 & 1074943 & 604979 & 2726593 \\ 
	\cline{2-13}
	& $\vdots$ &  \multicolumn{11}{|c|}{$\vdots$} \\
	\hline
	\end{tabular}
\caption{Number of double tile tilings of $3{\bb P}^2$, $2{\bb T}^2$, $4{\bb P}^2$, $3{\bb T}^2$. For orientable $2{\bb T}^2$, $3{\bb T}^2$,  and each $n$-gon, the two rows count the tilings in which two tiles have the same or different orientations. For $3{\bb T}^2$, the data for $14\le n\le 18$ are not presented.}
\label{tiling_number}
\end{table}

Next, we draw the schematics of some tilings:
\begin{itemize}
\item Figure \ref{T2A}: all tilings of $2{\bb T}^2$ by two congruent heptagons, such that the number of distinct edge lengths is $\ge 5$. 
\item Figure \ref{P2B}: all tilings of $3{\bb P}^2$ by two heptagons or octagons, with 5 distinct edge lengths. 
\item Figure \ref{P2A}: all tilings of $3{\bb P}^2$ and $4{\bb P}^2$ with maximal number of distinct edge lengths, except $4{\bb P}^2$ by two $12$-gons (there are too many to draw). 
\item Figure \ref{P2C}: all tilings of $3{\bb P}^2$ by two $9$-gons with all edge length being equal. 
\end{itemize}
In the pictures, the thick edges of the same color have the same lengths, and the normal edges can have distinct lengths. We also indicate the vertices by colors.

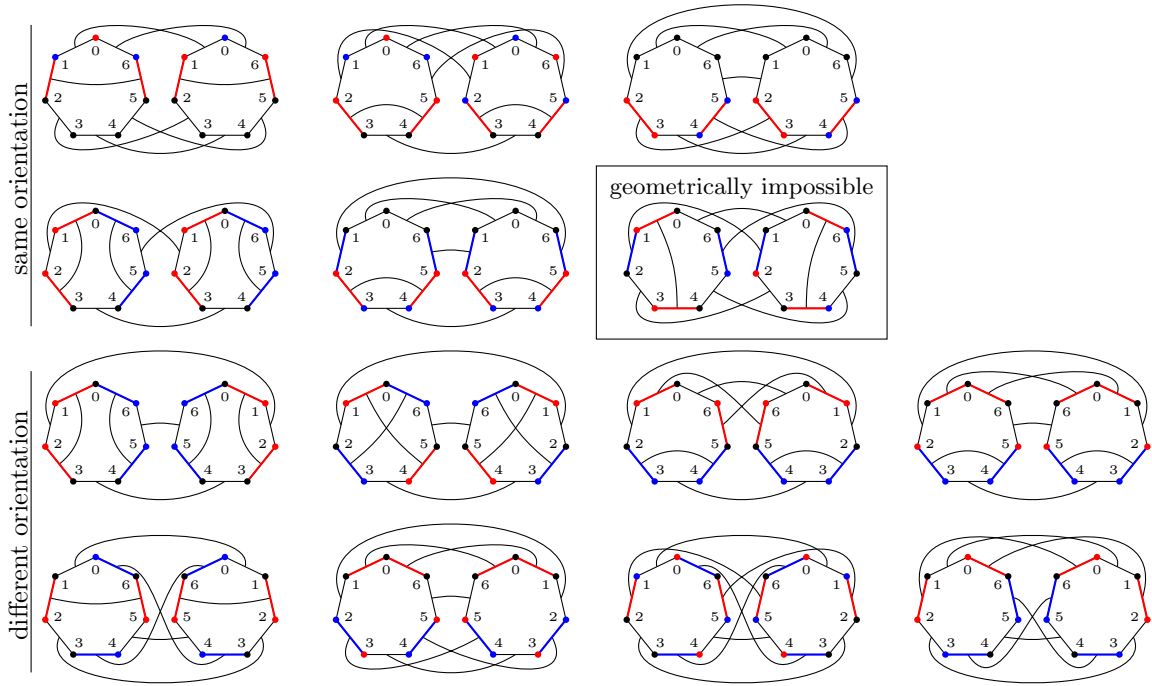
\begin{figure}[htp]
\centering
\begin{tikzpicture}[>=latex,scale=0.85]

\node[rotate=90] at (-2.2,-1.4) {same orientation};
\node[rotate=90] at (-2.2,-6.8) {different orientation};
\node at (9,-1.55) {\small geometrically impossible};
\draw
	(-2,1) -- ++(0,-4.7)
	(-2,-4.4) -- ++(0,-4.7)
	(6.75,-3.9) rectangle (11.25,-1.2);


\foreach \a in {0,...,6}
\foreach \u in {1,-1}
\foreach \x in {0,1,2}
\foreach \y in {0,1}
{
\begin{scope}[shift={(\u cm+4.5*\x cm, -2.7*\y cm)}]

\draw
	(38.57+51.43*\a:0.8) -- (90+51.43*\a:0.8);
	
\node at (90+51.43*\a:0.6) {\tiny \a};

\end{scope}
}

\foreach \a in {0,...,6}
\foreach \x in {0,1,2}
\foreach \y in {0,1}
{
\coordinate (A\a\x\y) at ([shift={(-1cm+4.5*\x cm, -2.7*\y cm)}] 115.71+51.43*\a:0.72);
\coordinate (B\a\x\y) at ([shift={(1cm+4.5*\x cm, -2.7*\y cm)}] 115.71+51.43*\a:0.72);
}


\foreach \x/\y in {0/0, 1/0, 2/0, 0/1, 1/1}
\draw[shift={(4.5*\x cm, -2.7*\y cm)}]
	(A3\x\y) .. controls (-0.4,-1.1) and (0.4,-1.1) .. (B3\x\y);

\foreach \x/\y in {1/0, 2/0, 0/0, 1/1}
\draw[shift={(4.5*\x cm, -2.7*\y cm)}]
	(A0\x\y) .. controls (-1.5,1.2) and (0.3,1) .. (B0\x\y)
	(A6\x\y) .. controls (-0.3,1) and (1.5,1.2) .. (B6\x\y);

\foreach \x/\y in {0/0, 2/0, 2/1}
\draw[shift={(4.5*\x cm, -2.7*\y cm)}]
	(A2\x\y) .. controls (-2,-1.3) and (-0.3,-0.9) .. (B2\x\y)
	(A4\x\y) .. controls (0.3,-0.9) and (2,-1.3) .. (B4\x\y);

\foreach \x/\y in {0/1, 1/0, 2/1}
\draw[shift={(4.5*\x cm, -2.7*\y cm)}]
	(A1\x\y) .. controls (-2.1,1.5) and (0,0.8) .. (B1\x\y)
	(B5\x\y) .. controls (2.1,1.5) and (0,0.8) .. (A5\x\y);

\foreach \x/\y in {2/0, 1/1}	
\draw[shift={(4.5*\x cm, -2.7*\y cm)}]	
	(A1\x\y) .. controls (-2.4,1.7) and (2.4,1.7) .. (B5\x\y)
	(A5\x\y) .. controls (0,0.2) .. (B1\x\y);

\foreach \a/\b/\x/\y in {0/2/0/1, 4/6/0/1, 2/4/1/0, 2/4/1/1, 1/5/0/0}
\draw
	(A\a\x\y) to[out=-64.29+51.43*\a, in=-64.29+51.43*\b] (A\b\x\y)
	(B\a\x\y) to[out=-64.29+51.43*\a, in=-64.29+51.43*\b] (B\b\x\y);

\draw[shift={(9 cm, -2.7 cm)}]
	(A021) to[out=-64.29+51.43*0, in=-64.29+51.43*3] (A321)
	(B621) to[out=-64.29+51.43*6, in=-64.29+51.43*3] (B321)
	(A621) .. controls (-0.2,0.9) and (0.2,0.9) .. (B021);


\foreach \a/\x/\y in {1/0/0, 5/0/0, 2/1/0, 4/1/0, 2/2/0, 4/2/0, 0/0/1, 2/0/1, 2/1/1, 4/1/1, 3/2/1}
\foreach \u in {-1,1}
\draw[red, thick, shift={(\u cm + 4.5*\x cm, -2.7*\y cm)}]
	(90+51.43*\a:0.8) -- (90+51.43+51.43*\a:0.8);
	
\foreach \a/\x/\y in {4/0/1, 6/0/1, 1/1/1, 5/1/1}
\foreach \u in {-1,1}
\draw[blue, thick, shift={(\u cm + 4.5*\x cm, -2.7*\y cm)}]
	(90+51.43*\a:0.8) -- (90+51.43+51.43*\a:0.8);

\draw[red, thick, shift={(8 cm, -2.7 cm)}]
	(90+51.43*0:0.8) -- (90+51.43+51.43*0:0.8);
\draw[red, thick, shift={(10 cm, -2.7 cm)}]
	(90+51.43*6:0.8) -- (90+51.43+51.43*6:0.8);
\draw[blue, thick, shift={(8 cm, -2.7 cm)}]
	(90+51.43*1:0.8) -- (90+51.43+51.43*1:0.8)
	(90+51.43*5:0.8) -- (90+51.43+51.43*5:0.8);
\draw[blue, thick, shift={(10 cm, -2.7 cm)}]
	(90+51.43*1:0.8) -- (90+51.43+51.43*1:0.8)
	(90+51.43*5:0.8) -- (90+51.43+51.43*5:0.8);

	
\foreach \a/\u/\x/\y in {
	0/-1/0/0, 1/1/0/0, 6/1/0/0,
	0/-1/1/0, 2/-1/1/0, 5/-1/1/0, 1/1/1/0, 6/1/1/0,
	2/-1/2/0, 3/-1/2/0, 2/1/2/0, 3/1/2/0,
	1/-1/0/1, 2/-1/0/1, 1/1/0/1, 2/1/0/1,
	2/-1/1/1, 5/-1/1/1, 2/1/1/1, 5/1/1/1,
	1/-1/2/1, 3/-1/2/1, 2/1/2/1}
\fill[red, shift={(\u cm + 4.5*\x cm, -2.7*\y cm)}]
	(90+51.43*\a:0.8) circle (0.05);

\foreach \a/\u/\x/\y in {
	0/1/0/0, 1/-1/0/0, 6/-1/0/0,
	0/1/1/0, 2/1/1/0, 5/1/1/0, 1/-1/1/0, 6/-1/1/0,
	4/-1/2/0, 5/-1/2/0, 4/1/2/0, 5/1/2/0,
	5/-1/0/1, 6/-1/0/1, 5/1/0/1, 6/1/0/1,
	3/-1/1/1, 4/-1/1/1, 3/1/1/1, 4/1/1/1,
	6/1/2/1, 4/1/2/1, 5/-1/2/1}
\fill[blue, shift={(\u cm + 4.5*\x cm, -2.7*\y cm)}]
	(90+51.43*\a:0.8) circle (0.05);

\foreach \a/\u/\x/\y in {
	2/-1/0/0, 3/-1/0/0, 4/-1/0/0, 5/-1/0/0, 2/1/0/0, 3/1/0/0, 4/1/0/0, 5/1/0/0,
	3/-1/1/0, 4/-1/1/0, 3/1/1/0, 4/1/1/0,
	0/-1/2/0, 1/-1/2/0, 6/-1/2/0, 0/1/2/0, 1/1/2/0, 6/1/2/0,
	0/-1/0/1, 3/-1/0/1, 4/-1/0/1, 0/1/0/1, 3/1/0/1, 4/1/0/1,
	0/-1/1/1, 1/-1/1/1, 6/-1/1/1, 0/1/1/1, 1/1/1/1, 6/1/1/1,
	0/-1/2/1, 2/-1/2/1, 4/-1/2/1, 6/-1/2/1, 0/1/2/1, 1/1/2/1, 3/1/2/1, 5/1/2/1}
\fill[shift={(\u cm + 4.5*\x cm, -2.7*\y cm)}]
	(90+51.43*\a:0.8) circle (0.05);
										

\begin{scope}[yshift=-5.4cm]

\foreach \a in {0,...,6}
\foreach \u in {1,-1}
\foreach \x in {0,...,3}
\foreach \y in {0,1}
{
\begin{scope}[shift={(\u cm+4.5*\x cm, -2.7*\y cm)}]

\draw
	(38.57+51.43*\a:0.8) -- (90+51.43*\a:0.8);
	
\node at (90-51.43*\a*\u:0.6) {\tiny \a};

\end{scope}
}

\foreach \a in {0,...,6}
\foreach \x in {0,...,3}
\foreach \y in {0,1,2}
{

\coordinate (A\a\x\y) at ([shift={(-1cm+4.5*\x cm, -2.7*\y cm)}] 115.71+51.43*\a:0.72);
\coordinate (B\a\x\y) at ([shift={(1cm+4.5*\x cm, -2.7*\y cm)}] 64.28-51.43*\a:0.72);

}

\foreach \a/\b/\x/\y in 
	{0/2/0/0, 4/6/0/0, 0/4/1/0, 2/6/1/0, 2/4/2/0, 2/4/3/0, 1/5/0/1}
\draw
	(A\a\x\y) to[out=295.71+51.43*\a, in=295.71+51.43*\b] (A\b\x\y)
	(B\a\x\y) to[out=244.28-51.43*\a, in=244.28-51.43*\b] (B\b\x\y);

\foreach \x/\y in {0/1, 2/1, 3/1}
\draw[shift={(4.5*\x cm, -2.7*\y cm)}]
	(A0\x\y) .. controls (-1.5,1.3) and (1.5,1.3) .. (B0\x\y);
	
\foreach \x/\y in {0/0, 1/0, 2/0, 3/0, 1/1}
\draw[shift={(4.5*\x cm, -2.7*\y cm)}]
	(A1\x\y) .. controls (-2.4,1.7) and (2.4,1.7) .. (B1\x\y);

\foreach \x/\y in {0/1, 2/1, 3/1}
\draw[shift={(4.5*\x cm, -2.7*\y cm)}]
	(A2\x\y) .. controls (-2,-1.4) and (2,-1.4) .. (B2\x\y);
	
\foreach \x/\y in {0/0, 1/0, 2/0, 3/0, 1/1}
\draw[shift={(4.5*\x cm, -2.7*\y cm)}]
	(A3\x\y) .. controls (-0.4,-1.1) and (0.4,-1.1) .. (B3\x\y);

\foreach \x/\y in {0/1, 2/1, 3/1}
\draw[shift={(4.5*\x cm, -2.7*\y cm)}]
	(A4\x\y) .. controls (0,-0.5) .. (B4\x\y);
	
\foreach \x/\y in {0/0, 1/0, 3/0, 1/1}
\draw[shift={(4.5*\x cm, -2.7*\y cm)}]
	(A5\x\y) .. controls (0,0.2) .. (B5\x\y);
		
\foreach \x/\y in {3/0, 1/1}
\draw[shift={(4.5*\x cm, -2.7*\y cm)}]
	(A0\x\y) .. controls (-1.5,1.2) and (0.3,1) .. (B6\x\y)
	(B0\x\y) .. controls (1.5,1.2) and (-0.3,1) .. (A6\x\y);

\foreach \x/\y in {0/1, 2/1}
\draw[shift={(4.5*\x cm, -2.7*\y cm)}]
	(A3\x\y) .. controls (0,-1.5) and (-0.1,0.9) .. (B6\x\y)
	(B3\x\y) .. controls (0,-1.5) and (0.1,0.9) .. (A6\x\y);
	
\draw[shift={(4.5*2 cm, -2.7*0 cm)}]
	(A020) .. controls (-0.8,1.5) and (0,0.4) .. (B520)
	(B020) .. controls (0.8,1.5) and (0,0.4) .. (A520)
	(A620) .. controls (-0.2,0.9) and (0.2,0.9) .. (B620);
			
\draw[shift={(4.5*1 cm, -2.7*1 cm)}]
	(A211) .. controls (-2,-1.3) and (-0.3,-0.9) .. (B411)
	(A411) .. controls (0.3,-0.9) and (2,-1.3) .. (B211);

\draw[shift={(4.5*2 cm, -2.7*1 cm)}]
	(A121) .. controls (-2.1,1.5) and (0,0.8) .. (B521)
	(B121) .. controls (2.1,1.5) and (0,0.8) .. (A521);

\draw[shift={(4.5*3 cm, -2.7*1 cm)}]
	(A131) .. controls (-2.1,1.5) and (0,1.2) .. (B631)
	(B131) .. controls (2.1,1.5) and (0,1.2) .. (A631)
	(A331) .. controls (-0.5,-1.5) and (0,0.2) .. (B531)
	(B331) .. controls (0.5,-1.5) and (0,0.2) .. (A531);


\foreach \a/\x/\y in {0/0/0, 2/0/0, 0/1/0, 4/1/0, 0/2/0, 5/2/0, 0/3/0, 6/3/0, 1/0/1, 5/0/1, 0/1/1, 6/1/1, 1/2/1, 5/2/1, 1/3/1, 6/3/1}
\foreach \u in {1,-1}
\draw[red, thick, shift={(\u cm+4.5*\x cm, -2.7*\y cm)}]
	(90-51.43*\u*\a:0.8) -- (90-51.43*\u*\a-51.43*\u:0.8);

\foreach \a/\x/\y in {4/0/0, 6/0/0, 2/1/0, 6/1/0, 2/2/0, 4/2/0, 2/3/0, 4/3/0, 3/0/1, 6/0/1, 2/1/1, 4/1/1, 3/2/1, 6/2/1, 3/3/1, 5/3/1}
\foreach \u in {1,-1}
\draw[blue, thick, shift={(\u cm+4.5*\x cm, -2.7*\y cm)}]
	(90-51.43*\u*\a:0.8) -- (90-51.43*\u*\a-51.43*\u:0.8);


\foreach \u in {1,-1}
{

\foreach \a/\x/\y in {
	1/0/0, 2/0/0,
	1/1/0, 4/1/0,
	1/2/0, 6/2/0,
	2/3/0, 5/3/0,
	2/0/1, 5/0/1,
	3/1/1, 5/1/1,
	0/2/1, 4/2/1,
	0/3/1, 2/3/1
	}
\fill[red, shift={(\u cm+4.5*\x cm, -2.7*\y cm)}]
	(90-51.43*\u*\a:0.8) circle (0.05);

\foreach \a/\x/\y in {
	5/0/0, 6/0/0,
	3/1/0, 6/1/0,
	3/2/0, 4/2/0,
	3/3/0, 4/3/0,
	0/0/1, 4/0/1,
	2/1/1, 4/1/1,
	1/2/1, 5/2/1,
	3/3/1, 5/3/1
	}
\fill[blue, shift={(\u cm+4.5*\x cm, -2.7*\y cm)}]
	(90-51.43*\u*\a:0.8) circle (0.05);
		
\foreach \a/\x/\y in {
	0/0/0, 3/0/0, 4/0/0, 
	0/1/0, 2/1/0, 5/1/0, 
	0/2/0, 2/2/0, 5/2/0, 
	0/3/0, 1/3/0, 6/3/0,
	1/0/1, 3/0/1, 6/0/1,
	1/1/1, 0/1/1, 6/1/1,
	2/2/1, 3/2/1, 6/2/1,
	1/3/1, 4/3/1, 6/3/1
	}
\fill[shift={(\u cm+4.5*\x cm, -2.7*\y cm)}]
	(90-51.43*\u*\a:0.8) circle (0.05);
	
}

\end{scope}
			
\end{tikzpicture}
\caption{Tilings of $2{\bb T}^2$ by two congruent $7$-gons, with 5 or 6 edge lengths. Thick edges of the same color have the same lengths. Normal edges have distinct lengths.} 
\label{T2A}
\end{figure}

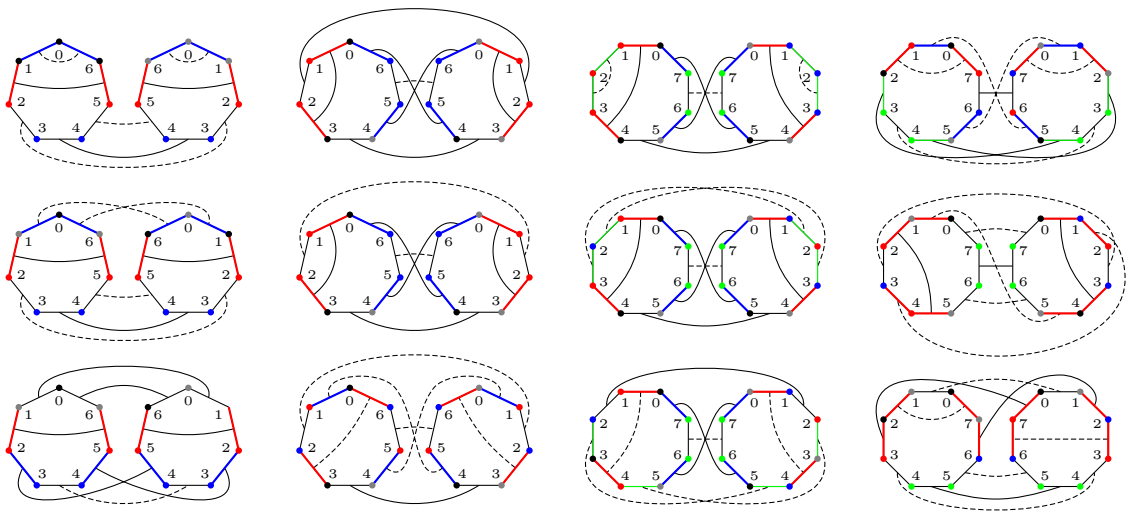
\begin{figure}[htp]
\centering
\begin{tikzpicture}[>=latex,scale=0.85]

\foreach \a in {0,...,6}
\foreach \u in {1,-1}
\foreach \x in {0,1}
\foreach \y in {0,1,2}
{
\begin{scope}[shift={(\u cm+4.5*\x cm, -2.7*\y cm)}]

\draw
	(38.57+51.43*\a:0.8) -- (90+51.43*\a:0.8);
	
\node at (90-51.43*\a*\u:0.6) {\tiny \a};

\end{scope}
}

\foreach \a in {0,...,6}
\foreach \x in {0,1,2}
\foreach \y in {0,1,2}
{

\coordinate (A\a\x\y) at ([shift={(-1cm+4.5*\x cm, -2.7*\y cm)}] 115.71+51.43*\a:0.72);
\coordinate (B\a\x\y) at ([shift={(1cm+4.5*\x cm, -2.7*\y cm)}] 64.28-51.43*\a:0.72);

}



\foreach \a/\b/\x/\y in 
	{1/5/0/0, 1/5/0/1, 1/5/0/2,
	0/2/1/0, 0/2/1/1}
\draw
	(A\a\x\y) to[out=295.71+51.43*\a, in=295.71+51.43*\b] (A\b\x\y)
	(B\a\x\y) to[out=244.28-51.43*\a, in=244.28-51.43*\b] (B\b\x\y);
	

\foreach \x/\y in {1/0}
\draw[shift={(4.5*\x cm, -2.7*\y cm)}]
	(A1\x\y) .. controls (-2.4,1.7) and (2.4,1.7) .. (B1\x\y);
	
\foreach \x/\y in {0/0, 0/1, 1/0, 1/1, 1/2}
\draw[shift={(4.5*\x cm, -2.7*\y cm)}]
	(A3\x\y) .. controls (-0.4,-1.1) and (0.4,-1.1) .. (B3\x\y);
	
\draw[shift={(4.5*0 cm, -2.7*2 cm)}]
	(A002) .. controls (-1.5,1.3) and (1.5,1.3) .. (B002)
	(A602) .. controls (-0.2,0.9) and (0.2,0.9) .. (B602)
	(A202) .. controls (-2,-1.3) and (-0.3,-0.9) .. (B402)
	(A402) .. controls (0.3,-0.9) and (2,-1.3) .. (B202);

\foreach \x/\y in {1/0, 1/1}
\draw[shift={(4.5*\x cm, -2.7*\y cm)}]
	(A6\x\y) .. controls (-0.1,0.9) and (0.1,-0.7) .. (B4\x\y)
	(A4\x\y) .. controls (-0.1,-0.7) and (0.1,0.9) .. (B6\x\y);
		

\begin{scope}[dash pattern=on 2pt off 1pt]


\foreach \a/\b/\x/\y in 
	{0/6/0/0, 
	2/6/1/2}
\draw
	(A\a\x\y) to[out=295.71+51.43*\a, in=295.71+51.43*\b] (A\b\x\y)
	(B\a\x\y) to[out=244.28-51.43*\a, in=244.28-51.43*\b] (B\b\x\y);


\foreach \x/\y in {1/1, 1/2}
\draw[shift={(4.5*\x cm, -2.7*\y cm)}]
	(A1\x\y) .. controls (-2.4,1.7) and (2.4,1.7) .. (B1\x\y);
	
\draw[shift={(4.5*0 cm, -2.7*2 cm)}]
	(A302) .. controls (-0.4,-1.1) and (0.4,-1.1) .. (B302);

\foreach \x/\y in {0/0, 0/1}
\draw[shift={(4.5*\x cm, -2.7*\y cm)}]
	(A4\x\y) .. controls (0,-0.5) .. (B4\x\y)
	(A2\x\y) .. controls (-2,-1.4) and (2,-1.4) .. (B2\x\y);

\foreach \x/\y in {1/0, 1/1, 1/2}
\draw[shift={(4.5*\x cm, -2.7*\y cm)}]
	(A5\x\y) .. controls (0,0.2) .. (B5\x\y);

\draw[shift={(4.5*0 cm, -2.7*1 cm)}]
	(A001) .. controls (-1.5,1.2) and (0.3,1) .. (B601)
	(A601) .. controls (-0.3,1) and (1.5,1.2) .. (B001);

\draw[shift={(4.5*1 cm, -2.7*2 cm)}]
	(A012) to[out=115.71, in=130] 
	(-0.3,0.8) to[out=-50, in=-141.44]
	(B412)
	(A412) to[out=321.43, in=230] 
	(0.3,0.8) to[out=50, in=64.28]
	(B012);
			
\end{scope}


\foreach \a/\b/\x/\y in {1/5/0/0, 1/5/0/1, 1/5/0/2, 0/2/1/0, 0/2/1/1, 2/6/1/2}
\foreach \u in {1,-1}
\draw[red, thick]
	([shift={(\u cm + 4.5*\x cm, -2.7*\y cm)}] 90-51.43*\u*\a:0.8) -- ([shift={(\u cm + 4.5*\x cm, -2.7*\y cm)}] 90-51.43*\u*\a-51.43*\u:0.8)
	([shift={(\u cm + 4.5*\x cm, -2.7*\y cm)}] 90-51.43*\u*\b:0.8) -- ([shift={(\u cm + 4.5*\x cm, -2.7*\y cm)}] 90-51.43*\u*\b-51.43*\u:0.8);
	
\foreach \a/\b/\x/\y in {0/6/0/0, 0/6/0/1, 2/4/0/2, 4/6/1/0, 4/6/1/1, 0/4/1/2}
\foreach \u in {1,-1}
\draw[blue, thick]
	([shift={(\u cm + 4.5*\x cm, -2.7*\y cm)}] 90-51.43*\u*\a:0.8) -- ([shift={(\u cm + 4.5*\x cm, -2.7*\y cm)}] 90-51.43*\u*\a-51.43*\u:0.8)
	([shift={(\u cm + 4.5*\x cm, -2.7*\y cm)}] 90-51.43*\u*\b:0.8) -- ([shift={(\u cm + 4.5*\x cm, -2.7*\y cm)}] 90-51.43*\u*\b-51.43*\u:0.8);
		
	
\foreach \a/\u/\x/\y in {
	0/-1/0/0, 1/-1/0/0, 6/-1/0/0,
	0/-1/0/1, 1/1/0/1, 6/1/0/1,
	0/-1/0/2, 1/1/20/2, 6/1/0/2,
	0/-1/1/0, 3/-1/1/0, 4/1/1/0,
	0/-1/1/1, 3/-1/1/1, 4/1/1/1,
	0/-1/1/2, 3/-1/1/2, 4/1/1/2
	}
\fill
	([shift={(\u cm + 4.5*\x cm, -2.7*\y cm)}] 90-51.43*\u*\a:0.8) circle (0.05);

\foreach \a/\u/\x/\y in {
	0/1/0/0, 1/1/0/0, 6/1/0/0,
	0/1/0/1, 1/-1/0/1, 6/-1/0/1,
	0/1/0/2, 1/-1/0/2, 6/-1/0/2,
	0/1/1/0, 3/1/1/0, 4/-1/1/0,
	0/1/1/1, 3/1/1/1, 4/-1/1/1,
	0/1/1/2, 3/1/1/2, 4/-1/1/2
	}
\fill[gray]
	([shift={(\u cm + 4.5*\x cm, -2.7*\y cm)}] 90-51.43*\u*\a:0.8) circle (0.05);

\foreach \a/\u/\y/\x in {
	2/-1/0/0, 2/1/0/0, 5/-1/0/0, 5/1/0/0,
	2/-1/1/0, 2/1/1/0, 5/-1/1/0, 5/1/1/0,
	2/-1/2/0, 2/1/2/0, 5/-1/2/0, 5/1/2/0,
	1/-1/0/1, 1/1/0/1, 2/-1/0/1, 2/1/0/1,
	1/-1/1/1, 1/1/1/1, 2/-1/1/1, 2/1/1/1,
	1/-1/2/1, 1/1/2/1, 5/-1/2/1, 5/1/2/1
	}
\fill[red]
	([shift={(\u cm + 4.5*\x cm, -2.7*\y cm)}] 90-51.43*\u*\a:0.8) circle (0.05);

\foreach \a/\u/\y/\x in {
	3/-1/0/0, 3/1/0/0, 4/-1/0/0, 4/1/0/0,
	3/-1/1/0, 3/1/1/0, 4/-1/1/0, 4/1/1/0,
	3/-1/2/0, 3/1/2/0, 4/-1/2/0, 4/1/2/0,
	5/-1/0/1, 5/1/0/1, 6/-1/0/1, 6/1/0/1,
	5/-1/1/1, 5/1/1/1, 6/-1/1/1, 6/1/1/1,
	2/-1/2/1, 2/1/2/1, 6/-1/2/1, 6/1/2/1
	}
\fill[blue]
	([shift={(\u cm + 4.5*\x cm, -2.7*\y cm)}] 90-51.43*\u*\a:0.8) circle (0.05);


\begin{scope}[xshift=4.5*2 cm]

\foreach \a in {0,...,7}
\foreach \u in {1,-1}
\foreach \x in {0,1}
\foreach \y in {0,1,2}
{
\begin{scope}[shift={(\u cm+4.5*\x cm, -2.7*\y cm)}]

\draw
	(-22.5+45*\a:0.8) -- (22.5+45*\a:0.8);
	
\node at (90+22.5*\u-45*\a*\u:0.62) {\tiny \a};

\end{scope}
}

\foreach \a in {0,...,7}
\foreach \x in {0,1,2}
\foreach \y in {0,1,2}
{

\coordinate (A\a\y\x) at ([shift={(-1 cm+4.5*\x cm, -2.7*\y cm)}] 90+45*\a:0.74);
\coordinate (B\a\y\x) at ([shift={(1 cm+4.5*\x cm, -2.7*\y cm)}]  90-45*\a:0.74);

}

\foreach \y in {0,1}
\draw[yshift=-2.7*\y cm]
	(A0\y 0) to[out=-90, in=45] (A3\y 0)
	(B0\y 0) to[out=-90, in=135] (B3\y 0)
	(A4\y 0) .. controls (-0.2,-1) and (0.2,-1) .. (B4\y 0)
	(A5\y 0) .. controls (0,-0.8) and (0,0.8) .. (B7\y 0)
	(B5\y 0) .. controls (0,-0.8) and (0,0.8) .. (A7\y 0);

\draw[yshift=-2.7*2 cm]
	(A120) .. controls (-1.6,1.3) and (1.6,1.3) .. (B120)
	(A520) .. controls (0,-0.8) and (0,0.8) .. (B720)
	(B520) .. controls (0,-0.8) and (0,0.8) .. (A720);

\draw[dash pattern=on 2pt off 1pt]
	(A600) -- (B600)
	(A100) to[out=-45, in=0] (A200)
	(B100) to[out=225, in=180] (B200);

\draw[dash pattern=on 2pt off 1pt, yshift=-2.7*1 cm]
	(A610) -- (B610)
	(A210) .. controls (-2.6,1.8) and (1.7,1.3) .. (B110)
	(B210) .. controls (2.6,1.8) and (-1.7,1.3) .. (A110);
		
\draw[dash pattern=on 2pt off 1pt, yshift=-2.7*2 cm]
	(A620) -- (B620)
	(A020) to[out=-90, in=45] (A320)
	(B020) to[out=-90, in=135] (B320)
	(A220) .. controls (-2.4,-1.3) and (0,-1.1) .. (B420)
	(B220) .. controls (2.4,-1.3) and (0,-1.1) .. (A420);

\begin{scope}[xshift=4.5*1 cm]

\draw
	(A601) -- (B601)
	(A201) .. controls (-2.4,-1.3) and (0,-1.1) .. (B401)
	(B201) .. controls (2.4,-1.3) and (0,-1.1) .. (A401);

\draw[yshift=-2.7*1 cm]
	(A611) -- (B611)
	(A111) to[out=-45, in=90] (A411)
	(B011) to[out=-90, in=135] (B311);

\draw[yshift=-2.7*2 cm]
	(A421) .. controls (-0.2,-1) and (0.2,-1) .. (B421)
	(A221) .. controls (-2.4,1.5) and (0,0.8) .. (B721)
	(B121) to[out=60, in=0] (1,1) to[out=180, in=60] (A621);

\draw[dash pattern=on 2pt off 1pt]
	(A101) to[out=-45, in=225] (A701)
	(B101) to[out=225, in=-45] (B701)
	(A301) .. controls (-1.6,-1.3) and (1.6,-1.3) .. (B301)
	(A501) .. controls (0.1,-0.8) and (0,1.4) .. (B001)
	(B501) .. controls (-0.1,-0.8) and (0,1.4) .. (A001);

\draw[dash pattern=on 2pt off 1pt, yshift=-2.7*1 cm]
	(A511) .. controls (0,-0.6) .. (B511)
	(A711) .. controls (0,0.6) .. (B711)
	(A011) .. controls (0.1,1.5) and (-0.1,-1.5) .. (B411)
	(A211) .. controls (-2.6,1.5) and (2.6,1.5) .. (B211)
	(B111) to[out=0, in=90] (2,-0.2) to[out=-90, in=-80] (A311);
		
\draw[dash pattern=on 2pt off 1pt, yshift=-2.7*2 cm]
	(A121) to[out=-45, in=225] (A721)
	(B221) -- (B621)
	(A021) .. controls (-0.2,1) and (0.2,1) .. (B021)
	(A521) .. controls (0,-0.6) .. (B521)
	(A321) .. controls (-1.6,-1.3) and (1.6,-1.3) .. (B321);
				
\end{scope}


\foreach \u in {1,-1}
{
\foreach \a/\y/\x in {
	0/0/0, 3/0/0, 0/1/0, 3/1/0, 0/2/0, 3/2/0,
	1/0/1, 7/0/1, 0/1/1, 1/1/1, 3/1/1, 4/1/1, 1/2/1, 2/2/1, 6/2/1, 7/2/1}
\draw[red, thick, shift={(\u cm+4.5*\x cm, -2.7*\y cm)}]
	(90-22.5*\u-45*\u*\a:0.8) -- (90+22.5*\u-45*\u*\a:0.8);

\foreach \a/\y/\x in {
	5/0/0, 7/0/0, 5/1/0, 7/1/0, 5/2/0, 7/2/0,
	0/0/1, 5/0/1 }
\draw[blue, thick, shift={(\u cm+4.5*\x cm, -2.7*\y cm)}]
	(90-22.5*\u-45*\u*\a:0.8) -- (90+22.5*\u-45*\u*\a:0.8);

\foreach \a/\y/\x in {
	1/0/0, 2/0/0, 1/1/0, 2/1/0, 2/2/0, 4/2/0, 2/0/1, 4/0/1}
\draw[green, shift={(\u cm+4.5*\x cm, -2.7*\y cm)}]
	(90-22.5*\u-45*\u*\a:0.8) -- (90+22.5*\u-45*\u*\a:0.8);
	
}


\foreach \a/\u/\y/\x in {
	0/-1/0/0, 4/-1/0/0, 5/1/0/0,
	0/-1/1/0, 4/-1/1/0, 5/1/1/0,
	0/-1/2/0, 3/-1/2/0, 5/1/2/0,
	0/-1/0/1, 2/-1/0/1, 5/1/0/1,
	0/-1/1/1, 4/1/1/1, 0/1/1/1,
	0/-1/2/1, 2/-1/2/1, 0/1/2/1}
\fill
	([shift={(\u cm+4.5*\x cm, -2.7*\y cm)}] 90+22.5*\u-45*\u*\a:0.8) circle (0.05);

\foreach \a/\u/\y/\x in {
	0/1/0/0, 4/1/0/0, 5/-1/0/0,
	0/1/1/0, 4/1/1/0, 5/-1/1/0,
	0/1/2/0, 3/1/2/0, 5/-1/2/0,
	0/1/0/1, 2/1/0/1, 5/-1/0/1,
	1/-1/1/1, 5/-1/1/1, 5/1/1/1,
	1/-1/2/1, 7/-1/2/1, 1/1/2/1}
\fill[gray]
	([shift={(\u cm+4.5*\x cm, -2.7*\y cm)}] 90+22.5*\u-45*\u*\a:0.8) circle (0.05);

\foreach \a/\u/\y/\x in {
	1/-1/0/0, 2/-1/0/0, 3/-1/0/0,
	1/-1/1/0, 2/1/1/0, 3/-1/1/0,
	1/-1/2/0, 2/1/2/0, 4/-1/2/0,
	1/-1/0/1, 7/-1/0/1, 6/1/0/1,
	2/-1/1/1, 4/-1/1/1, 2/1/1/1,
	3/-1/2/1, 3/1/2/1, 7/1/2/1}
\fill[red]
	([shift={(\u cm+4.5*\x cm, -2.7*\y cm)}] 90+22.5*\u-45*\u*\a:0.8) circle (0.05);

\foreach \a/\u/\y/\x in {
	1/1/0/0, 2/1/0/0, 3/1/0/0,
	1/1/1/0, 2/-1/1/0, 3/1/1/0,
	1/1/2/0, 2/-1/2/0, 4/1/2/0,
	1/1/0/1, 7/1/0/1, 6/-1/0/1,
	3/-1/1/1, 1/1/1/1, 3/1/1/1,
	6/-1/2/1, 6/1/2/1, 2/1/2/1}
\fill[blue]
	([shift={(\u cm+4.5*\x cm, -2.7*\y cm)}] 90+22.5*\u-45*\u*\a:0.8) circle (0.05);

\foreach \a/\u/\y/\x in {
	6/-1/0/0, 7/-1/0/0, 6/1/0/0, 7/1/0/0,
	6/-1/1/0, 7/-1/1/0, 6/1/1/0, 7/1/1/0,
	6/-1/2/0, 7/-1/2/0, 6/1/2/0, 7/1/2/0,
	3/-1/0/1, 4/-1/0/1, 3/1/0/1, 4/1/0/1,
	6/-1/1/1, 7/-1/1/1, 6/1/1/1, 7/1/1/1,
	4/-1/2/1, 5/-1/2/1, 4/1/2/1, 5/1/2/1}
\fill[green]
	([shift={(\u cm+4.5*\x cm, -2.7*\y cm)}] 90+22.5*\u-45*\u*\a:0.8) circle (0.05);

\end{scope}
			
\end{tikzpicture}
\caption{Tilings of $3{\bb P}^2$ by two heptagons or octagons, with 5 distinct edge lengths.} 
\label{P2B}
\end{figure}

\begin{figure}[htp]
\centering
\begin{tikzpicture}[>=latex,scale=0.85]

\draw[yshift=-3cm]
	(-2.2,-1.7) rectangle (6.7,1.4);


\foreach \a in {0,...,6}
\foreach \u in {1,-1}
\foreach \y in {0,1}
{
\begin{scope}[shift={(\u,-3*\y)}]

\draw
	(38.57+51.43*\a:0.8) -- (90+51.43*\a:0.8);
	
\node at (90-51.43*\a*\u:0.6) {\tiny \a};

\end{scope}
}

\foreach \a in {0,...,6}
\foreach \y in {0,1}
{
\coordinate (A\a\y) at ([shift={(-1,-3*\y)}] 115.71+51.43*\a:0.72);
\coordinate (B\a\y) at ([shift={(1,-3*\y)}]  64.28-51.43*\a:0.72);
}


\foreach \y in {0,1}
\draw[yshift=-3*\y cm]
	(A0\y) .. controls (-1.5,1.3) and (1.5,1.3) .. (B0\y)
	(A6\y) .. controls (-0.2,0.9) and (0.2,0.9) .. (B6\y);

\foreach \y in {0}
\draw[yshift=-3*\y cm]
	(A3\y) .. controls (-0.4,-1.1) and (0.4,-1.1) .. (B3\y)
	(A1\y) to[out=295.71+51.43*1, in=295.71+51.43*5] (A5\y)
	(B1\y) to[out=244.28-51.43*1, in=244.28-51.43*5] (B5\y);

\foreach \y in {1}
\draw[yshift=-3*\y cm]
	(A2\y) .. controls (-2,-1.4) and (2,-1.4) .. (B2\y)
	(A4\y) .. controls (0,-0.5) .. (B4\y);


\foreach \y in {0}
\draw[dash pattern=on 2pt off 1pt, yshift=-3*\y cm]
	(A2\y) .. controls (-2,-1.4) and (2,-1.4) .. (B2\y)
	(A4\y) .. controls (0,-0.5) .. (B4\y);

\foreach \y in {1}
\draw[dash pattern=on 2pt off 1pt, yshift=-3*\y cm]
	(A1\y) .. controls (-2.4,1.7) and (2.4,1.7) .. (B1\y)
	(A3\y) .. controls (-0.4,-1.1) and (0.4,-1.1) .. (B3\y)
	(A5\y) .. controls (0,0.2) .. (B5\y);


\foreach \a/\y in {1/0, 5/0}
\foreach \u in {1,-1}
\draw[red, thick]
	([shift={(\u,-3*\y)}] 90-51.43*\u*\a:0.8) -- 
	([shift={(\u,-3*\y)}] 90-51.43*\u*\a-51.43*\u:0.8);

	
\foreach \a/\u/\y in {
	0/-1/0, 6/1/0, 1/1/0,
	0/-1/1, 6/1/1, 1/1/1, 
	0/1/1, 6/-1/1, 1/-1/1
	}
\fill
	([shift={(\u,-3*\y)}] 90-51.43*\u*\a:0.8) circle (0.05);

\foreach \a/\u/\y in {
	0/1/0, 6/-1/0, 1/-1/0,
	2/1/1, 3/1/1, 2/-1/1, 3/-1/1
	}
\fill[red]
	([shift={(\u,-3*\y)}] 90-51.43*\u*\a:0.8) circle (0.05);

\foreach \a/\u/\y in {
	2/1/0, 5/1/0, 2/-1/0, 5/-1/0,
	4/1/1, 5/1/1, 4/-1/1, 5/-1/1
	}
\fill[blue]
	([shift={(\u,-3*\y)}] 90-51.43*\u*\a:0.8) circle (0.05);

\foreach \a/\u/\y in {
	3/1/0, 4/1/0, 3/-1/0, 4/-1/0
	}
\fill[green]
	([shift={(\u,-3*\y)}] 90-51.43*\u*\a:0.8) circle (0.05);

\node at (0,-1.4) {\footnotesize $3{\bb P}^2$, 7-gon, 6 lengths};
\node at (0,-4.4) {\footnotesize $4{\bb P}^2$, 7-gon, 7 lengths};


\begin{scope}[xshift=4.5cm]

\foreach \a in {0,...,7}
\foreach \u in {1,-1}
\foreach \y in {0,1}
{
\begin{scope}[shift={(\u,-3*\y)}]

\draw
	(-22.5+45*\a:0.8) -- (22.5+45*\a:0.8);
	
\node at (90+22.5*\u-45*\a*\u:0.62) {\tiny \a};

\end{scope}
}

\foreach \a in {0,...,7}
\foreach \y in {0,1}
{
\coordinate (A\a\y) at ([shift={(-1,-3*\y)}] 90+45*\a:0.74);
\coordinate (B\a\y) at ([shift={(1,-3*\y)}]  90-45*\a:0.74);
}


\draw
	(A00) .. controls (-0.2,1) and (0.2,1) .. (B00)
	(A10) .. controls (-1.6,1.3) and (1.6,1.3) .. (B10)
	(A30) .. controls (-1.6,-1.3) and (1.6,-1.3) .. (B30)
	(A20) to[out=0, in=225] (A70)
	(B20) to[out=180, in=-45] (B70)
	(A40) .. controls (-0.5,-1.3) and (0,-0.3) .. (B60)
	(B40) .. controls (0.5,-1.3) and (0,-0.3) .. (A60)
	(A61) -- (B61);

\draw[yshift=-3*1 cm]
	(A01) .. controls (-0.2,1) and (0.2,1) .. (B01)
	(A21) .. controls (-2.6,1.7) and (2.6,1.7) .. (B21)
	(A41) .. controls (-0.2,-1) and (0.2,-1) .. (B41)
	;


\draw[dash pattern=on 2pt off 1pt]
	(A50) .. controls (0,-0.6) .. (B50);
	
\draw[dash pattern=on 2pt off 1pt, yshift=-3*1 cm]
	(A11) .. controls (-1.6,1.3) and (1.6,1.3) .. (B11)
	(A51) .. controls (0,-0.6) .. (B51)
	(A71) .. controls (0,0.6) .. (B71)
	(A31) .. controls (-1.6,-1.3) and (1.6,-1.3) .. (B31);


\foreach \u in {1,-1}
\foreach \a in {4,6}
\draw[red, thick, xshift=\u cm]
	(90-22.5*\u-45*\u*\a:0.8) -- (90+22.5*\u-45*\u*\a:0.8);

\foreach \u in {1,-1}
\foreach \a in {2, 7}
\draw[blue, thick, xshift=\u cm]
	(90-22.5*\u-45*\u*\a:0.8) -- (90+22.5*\u-45*\u*\a:0.8);
	
	
\foreach \a/\u/\y in {
	0/-1/0, 2/-1/0, 1/1/0,
	0/-1/1, 0/1/1, 1/-1/1, 1/1/1}
\fill
	([shift={(\u,-3*\y)}] 90+22.5*\u-45*\u*\a:0.8) circle (0.05);

\foreach \a/\u/\y in {
	0/1/0, 2/1/0, 1/-1/0,
	4/-1/1, 4/1/1, 5/-1/1, 5/1/1}
\filldraw[fill=white]
	([shift={(\u,-3*\y)}] 90+22.5*\u-45*\u*\a:0.8) circle (0.05);

\foreach \a/\u/\y in {
	3/-1/0, 7/-1/0, 4/1/0,
	2/-1/1, 2/1/1, 3/-1/1, 3/1/1}
\fill[red]
	([shift={(\u,-3*\y)}] 90+22.5*\u-45*\u*\a:0.8) circle (0.05);

\foreach \a/\u/\y in {
	3/1/0, 7/1/0, 4/-1/0,
	6/-1/1, 6/1/1, 7/-1/1, 7/1/1}
\fill[blue]
	([shift={(\u,-3*\y)}] 90+22.5*\u-45*\u*\a:0.8) circle (0.05);

\foreach \a/\u/\y in {
	5/1/0, 6/1/0, 5/-1/0, 6/-1/0}
\fill[green]
	([shift={(\u,-3*\y)}] 90+22.5*\u-45*\u*\a:0.8) circle (0.05);

\node at (0,-1.4) {\footnotesize $3{\bb P}^2$, 8-gon, 6 lengths};
\node at (0,-4.4) {\footnotesize $4{\bb P}^2$, 8-gon, 8 lengths};
							
\end{scope}


\begin{scope}[shift={(4.5,-6)}]

\foreach \a in {0,...,8}
\foreach \u in {1,-1}
{
\begin{scope}[xshift=\u cm]

\draw
	(90+40*\a:0.8) -- (50+40*\a:0.8);
	
\node at (90-40*\a*\u:0.6) {\tiny \a};

\end{scope}
}

\foreach \a in {0,...,8}
{
\coordinate (A\a) at ([xshift=-1cm] 110+40*\a:0.75);
\coordinate (B\a) at ([xshift=1cm] 70-40*\a:0.75);
}

\draw
	(A1) to[out=-30, in=210] (A7)
	(B1) to[out=210, in=-30] (B7)
	(A0) .. controls (-1,1.2) and (1,1.2) .. (B0)
	(A8) .. controls (-0.4,0.9) and (0.4,0.9) .. (B8)
	(A5) .. controls (-0.2,-0.7) and (0.2,-0.7) .. (B5)
	(A3) .. controls (-1.4,-1.3) and (1.4,-1.3) .. (B3);

\draw[dash pattern=on 2pt off 1pt]
	(A6) to[out=-20, in=200] (B6)
	(A4) .. controls (-0.2,-1) and (0.2,-1) .. (B4)
	(A2) .. controls (-2.6,-1.7) and (2.6,-1.7) .. (B2);


\foreach \u in {-1,1}
\draw[red, thick, xscale=\u, xshift=-1cm]
	(50:0.8) -- (10:0.8)
	(130:0.8) -- (170:0.8);

	
\foreach \a/\u in {
	0/-1, 8/1, 1/1}
\fill[xshift=\u cm]
	(90-40*\u*\a:0.8) circle (0.05);

\foreach \a/\u in {
	0/1, 8/-1, 1/-1}
\filldraw[fill=white, xshift=\u cm]
	(90-40*\u*\a:0.8) circle (0.05);

\foreach \a/\u in {
	2/1, 2/-1, 7/1, 7/-1}
\fill[red, xshift=\u cm]
	(90-40*\u*\a:0.8) circle (0.05);

\foreach \a/\u in {
	3/1, 3/-1, 4/1, 4/-1}
\fill[blue, xshift=\u cm]
	(90-40*\u*\a:0.8) circle (0.05);

\foreach \a/\u in {
	5/1, 5/-1, 6/1, 6/-1}
\fill[green, xshift=\u cm]
	(90-40*\u*\a:0.8) circle (0.05);

\node at (0,-1.45) {\footnotesize $4{\bb P}^2$, 9-gon, 8 lengths};
				
\end{scope}


\begin{scope}[xshift=4.5*2 cm]

\foreach \a in {0,...,9}
\foreach \u in {1,-1}
\foreach \y in {0,1}
{
\begin{scope}[shift={(\u,-3*\y)}]

\draw
	(72+36*\a:0.8) -- (108+36*\a:0.8);
	
\node at (90+18*\u-36*\a*\u:0.63) {\tiny \a};

\end{scope}
}

\foreach \a in {0,...,9}
\foreach \y in {0,1}
{
\coordinate (A\a\y) at ([shift={(-1,-3*\y)}] 90+36*\a:0.76);
\coordinate (B\a\y) at ([shift={(1,-3*\y)}] 90-36*\a:0.76);
}


\draw
	(A10) .. controls (-1.3,1.3) and (1.3,1.3) .. (B10)
	(A20) .. controls (-2.6,1.7) and (2.6,1.7) .. (B20)
	(A40) .. controls (-1.6,-1.3) and (1.6,-1.3) .. (B40)
	(A70) to[out=-18, in=198] (B70)	
	(A00) to[out=-90, in=18] (A30)
	(B00) to[out=-90, in=162] (B30)
	(A50) .. controls (0,-1.3) and (0,0.9) .. (B90)
	(B50) .. controls (0,-1.3) and (0,0.9) .. (A90);

\draw[yshift=-3cm]
	(A11) .. controls (-1.3,1.3) and (1.3,1.3) .. (B11)
	(A21) .. controls (-2.6,1.7) and (2.6,1.7) .. (B21)
	(A61) .. controls (-0.2,-0.8) and (0.2,-0.8) .. (B61)
	(A71) to[out=-18, in=198] (B71)	
	(A01) to[out=-90, in=18] (A31)
	(B01) to[out=-90, in=162] (B31)	
	(A51) to[out=90, in=198] (A81)
	(B51) to[out=90, in=-18] (B81);


\draw[dash pattern=on 2pt off 1pt]
	(A60) .. controls (-0.2,-0.8) and (0.2,-0.8) .. (B60)
	(A80) to[out=18, in=162] (B80);

\draw[dash pattern=on 2pt off 1pt, yshift=-3 cm]
	(A91) .. controls (-0.2,0.8) and (0.2,0.8) .. (B91)
	(A41) .. controls (-1.6,-1.3) and (1.6,-1.3) .. (B41);


\foreach \u in {1,-1}
{
\draw[blue, thick, xscale=\u, xshift=-1cm]
	(36:0.8) -- (72:0.8)
	(-72:0.8) -- (-108:0.8);
\draw[red, xscale=\u, xshift=-1cm]
	(180:0.8) -- (216:0.8)
	(72:0.8) -- (108:0.8);

\draw[blue, thick, yshift=-3cm, xscale=\u, xshift=-1cm]
	(36:0.8) -- (0:0.8)
	(-72:0.8) -- (-108:0.8);
\draw[red, thick, yshift=-3cm, xscale=\u, xshift=-1cm]
	(180:0.8) -- (216:0.8)
	(72:0.8) -- (108:0.8);	
}

	
\foreach \a/\u/\y in {
	0/-1/0, 4/-1/0, 5/1/0,
	0/-1/1, 4/-1/1, 0/1/1, 4/1/1}
\fill[shift={(\u,-3*\y)}]
	(90+18*\u-36*\u*\a:0.8) circle (0.05);

\foreach \a/\u/\y in {
	0/1/0, 4/1/0, 5/-1/0,
	5/-1/1, 9/-1/1, 5/1/1, 9/1/1}
\filldraw[fill=white, shift={(\u,-3*\y)}]
	(90+18*\u-36*\u*\a:0.8) circle (0.05);

\foreach \a/\u/\y in {
	1/-1/0, 2/1/0, 3/-1/0,
	1/-1/1, 2/1/1, 3/-1/1}
\fill[red, shift={(\u,-3*\y)}]
	(90+18*\u-36*\u*\a:0.8) circle (0.05);

\foreach \a/\u/\y in {
	1/1/0, 2/-1/0, 3/1/0,
	1/1/1, 2/-1/1, 3/1/1}
\fill[blue, shift={(\u,-3*\y)}]
	(90+18*\u-36*\u*\a:0.8) circle (0.05);

\foreach \a/\u/\y in {
	6/1/0, 6/-1/0, 9/1/0, 9/-1/0,
	6/-1/1, 7/1/1, 8/-1/1}
\fill[green, shift={(\u,-3*\y)}]
	(90+18*\u-36*\u*\a:0.8) circle (0.05);

\foreach \a/\u/\y in {
	7/1/0, 7/-1/0, 8/1/0, 8/-1/0,
	6/1/1, 7/-1/1, 8/1/1}
\fill[brown, shift={(\u,-3*\y)}]
	(90+18*\u-36*\u*\a:0.8) circle (0.05);

\foreach \y in {0,1}
\node at (0,-1.4-3*\y) {\footnotesize $4{\bb P}^2$, 10-gon, 8 lengths};
						
\end{scope}


\begin{scope}[xshift=3*4.5 cm]

\foreach \a in {0,...,10}
\foreach \u in {1,-1}
\foreach \y in {0,1,2}
{
\begin{scope}[shift={(\u,-3*\y)}]

\draw
	(90+32.73*\a:0.8) -- (122.73+32.73*\a:0.8);
	
\node at (90-32.73*\a*\u:0.65) {\tiny \a};

\end{scope}
}

\foreach \a in {0,...,10}
\foreach \y in {0,1,2}
{
\coordinate (A\a\y) at ([shift={(-1,-3*\y)}] 106.36+32.73*\a:0.77);
\coordinate (B\a\y) at ([shift={(1,-3*\y)}] 73.64-32.73*\a:0.77);
}

\foreach \a/\b/\y in {
	1/9/0, 4/6/0,
	1/9/1, 5/7/1,
	0/3/2, 7/10/2, 4/6/2}
\draw
	(A\a\y) to[out=-73.64+32.73*\a, in=-73.64+32.73*\b] (A\b\y)
	(B\a\y) to[out=-106.36-32.73*\a, in=-106.36-32.73*\b] (B\b\y);

\draw
	(A00) .. controls (-1,1.1) and (1,1.1) .. (B00)
	(A100) .. controls (-0.5,0.9) and (0.5,0.9) .. (B100)
	(A20) .. controls (-2.4,1.5) and (2.4,1.5) .. (B20)
	(A70) .. controls (0,-0.4) .. (B70)
	(A30) to[out=220, in=180] (-0.9,-1.1) to[out=0, in=220] (B80)
	(B30) to[out=-40, in=0] (0.9,-1.1) to[out=180, in=-40] (A80);

\draw[yshift=-3cm]
	(A01) .. controls (-1,1.1) and (1,1.1) .. (B01)
	(A101) .. controls (-0.5,0.9) and (0.5,0.9) .. (B101)
	(A21) .. controls (-2.4,1.5) and (2.4,1.5) .. (B21)
	(A41) .. controls (-1.2,-1.3) and (1.2,-1.3) .. (B41)
	(A31) to[out=220, in=180] (-0.9,-1.1) to[out=0, in=220] (B81)
	(B31) to[out=-40, in=0] (0.9,-1.1) to[out=180, in=-40] (A81);

\draw[yshift=-6cm]
	(A12) .. controls (-1.8,1.3) and (1.8,1.3) .. (B12)
	(A92) .. controls (-0.2,0.6) and (0.2,0.6) .. (B92)
	(A82) .. controls (0,0.15) .. (B82)
	(A22) .. controls (-2.6,1.7) and (2.6,1.7) .. (B22);
	
\draw[dash pattern=on 2pt off 1pt]
	(A50) .. controls (-0.2,-1) and (0.2,-1) .. (B50);

\draw[dash pattern=on 2pt off 1pt, yshift=-3 cm]
	(A61) .. controls (-0.4,-0.8) and (0.4,-0.8) .. (B61);
	
\draw[dash pattern=on 2pt off 1pt, yshift=-6 cm]
	(A52) .. controls (-0.2,-1) and (0.2,-1) .. (B52);


\foreach \u in {1,-1}
{
\foreach \a/\y in {1/0, 9/0, 1/1, 9/1, 0/2, 3/2}
\draw[red, thick, yshift=-3*\y cm, xscale=\u, xshift=-1cm]
	(90+32.73*\a:0.8) -- (122.73+32.73*\a:0.8);
	
\foreach \a/\y in {3/0, 8/0, 3/1, 8/1, 7/2, 10/2}
\draw[blue, thick, yshift=-3*\y cm, xscale=\u, xshift=-1cm]
	(90+32.73*\a:0.8) -- (122.73+32.73*\a:0.8);

\foreach \a/\y in {4/0, 6/0, 5/1, 7/1, 4/2, 6/2}
\draw[green, thick, yshift=-3*\y cm, xscale=\u, xshift=-1cm]
	(90+32.73*\a:0.8) -- (122.73+32.73*\a:0.8);
}

	
\foreach \a/\u/\y in {
	0/-1/0, 1/1/0, 10/1/0,
	0/-1/1, 1/1/1, 10/1/1,
	0/-1/2, 4/-1/2, 7/-1/2}
\fill[shift={(\u,-3*\y)}]
	(90-32.73*\u*\a:0.8) circle (0.05);

\foreach \a/\u/\y in {
	0/1/0, 1/-1/0, 10/-1/0,
	0/1/1, 1/-1/1, 10/-1/1,
	0/1/2, 4/1/2, 7/1/2}
\filldraw[fill=white, shift={(\u,-3*\y)}]
	(90-32.73*\u*\a:0.8) circle (0.05);

\foreach \a/\u/\y in {
	2/-1/0, 9/-1/0, 3/1/0,
	2/-1/1, 9/-1/1, 3/1/1,
	1/-1/2, 3/-1/2, 2/1/2}
\fill[red, shift={(\u,-3*\y)}]
	(90-32.73*\u*\a:0.8) circle (0.05);

\foreach \a/\u/\y in {
	2/1/0, 9/1/0, 3/-1/0,
	2/1/1, 9/1/1, 3/-1/1,
	1/1/2, 3/1/2, 2/-1/2}
\fill[blue, shift={(\u,-3*\y)}]
	(90-32.73*\u*\a:0.8) circle (0.05);

\foreach \a/\u/\y in {
	4/-1/0, 7/-1/0, 8/1/0, 
	5/-1/1, 8/-1/1, 4/1/1,
	8/-1/2, 10/-1/2, 9/1/2}
\fill[green, shift={(\u,-3*\y)}]
	(90-32.73*\u*\a:0.8) circle (0.05);

\foreach \a/\u/\y in {
	4/1/0, 7/1/0, 8/-1/0,
	5/1/1, 8/1/1, 4/-1/1,
	8/1/2, 10/1/2, 9/-1/2}
\fill[brown, shift={(\u,-3*\y)}]
	(90-32.73*\u*\a:0.8) circle (0.05);
		
\foreach \a/\u/\y in {
	5/-1/0, 6/-1/0, 5/1/0, 6/1/0,
	7/-1/1, 6/-1/1, 7/1/1, 6/1/1,
	5/-1/2, 6/-1/2, 5/1/2, 6/1/2}
\fill[orange, shift={(\u,-3*\y)}]
	(90-32.73*\u*\a:0.8) circle (0.05);

\foreach \y in {0,1,2}
\node at (0,-1.4-3*\y) {\footnotesize $4{\bb P}^2$, 11-gon, 8 lengths};
		
\end{scope}

\end{tikzpicture}
\caption{Tilings of $3{\bb P}^2$ and $4{\bb P}^2$ by two congruent polygons, with maximal number of distinct edge lengths. Not included: $4{\bb P}^2$, 12-gon, 8 lengths.} 
\label{P2A}
\end{figure}
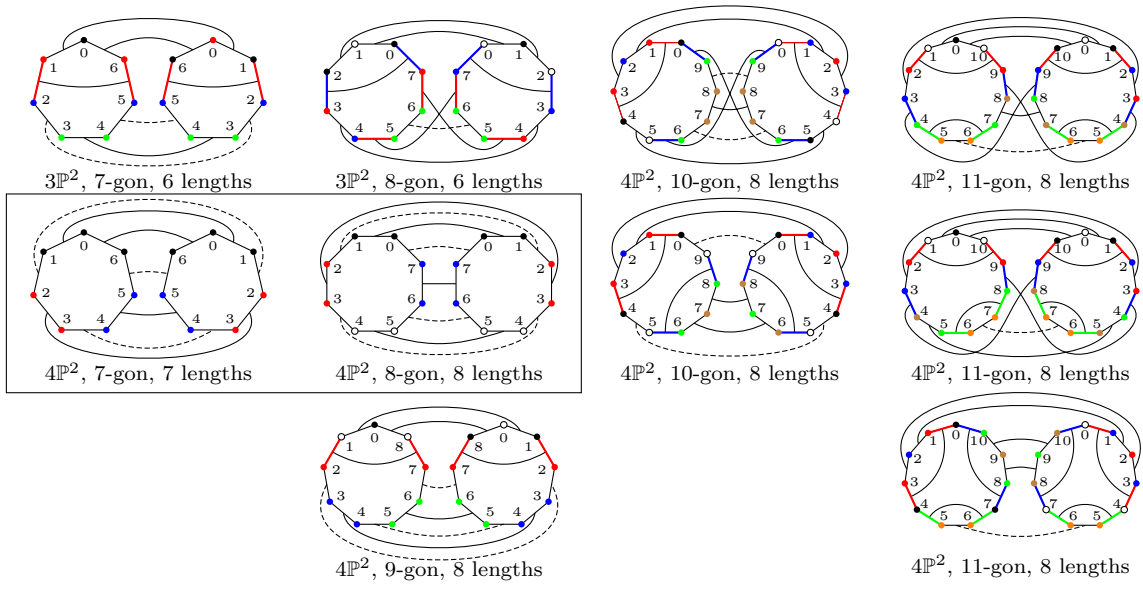

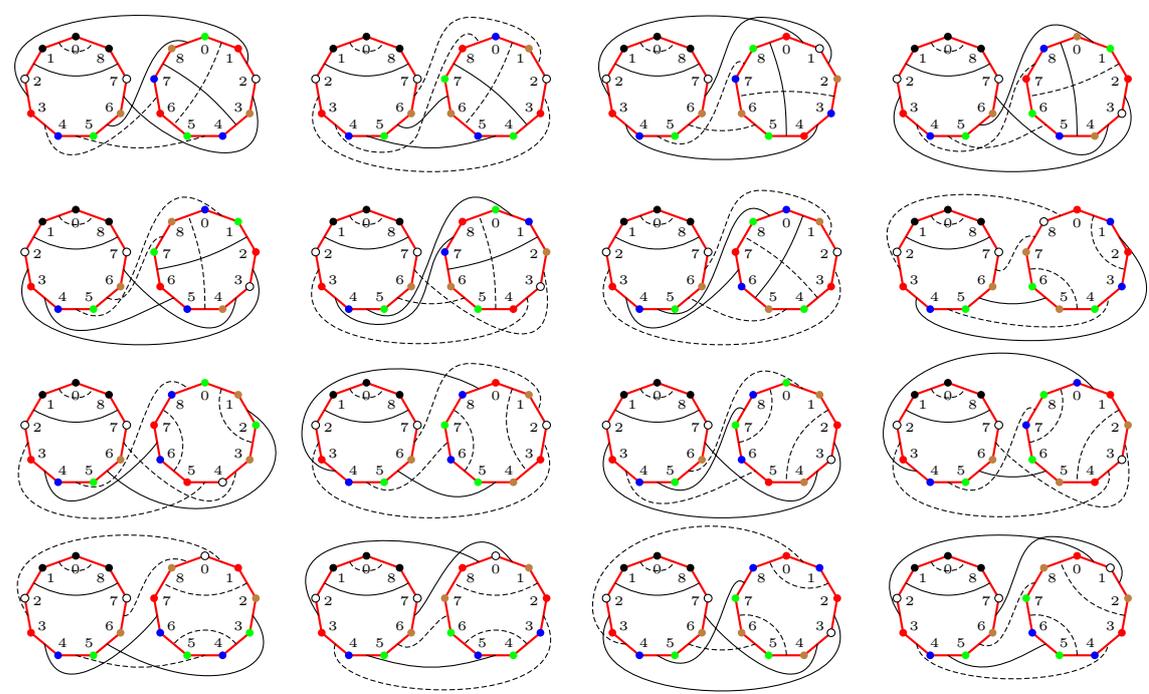
\begin{figure}[htp]
\centering
\begin{tikzpicture}[>=latex,scale=0.85]

\foreach \a in {0,...,8}
\foreach \u in {1,-1}
\foreach \x in {0,...,3}
\foreach \y in {0,...,3}
{
\begin{scope}[shift={(\u cm+4.5*\x cm, -2.7*\y cm)}]

\draw[red, thick]
	(90+40*\a:0.8) -- (50+40*\a:0.8);
	
\node at (90-40*\a*\u:0.6) {\tiny \a};

\end{scope}
}

\foreach \a in {0,...,8}
\foreach \x in {0,...,3}
\foreach \y in {0,...,3}
{
\coordinate (A\a\x\y) at ([shift={(-1 cm+4.5*\x cm, -2.7*\y cm)}] 110+40*\a:0.75);
\coordinate (B\a\x\y) at ([shift={(1 cm+4.5*\x cm, -2.7*\y cm)}]  70-40*\a:0.75);
}

\foreach \x in {0,...,3}
\foreach \y in {0,...,3}
{
\draw
	(A1\x\y) to[out=-30, in=210] (A7\x\y);
\draw[dash pattern=on 2pt off 1pt]
	(A0\x\y) to[out=-70, in=250] (A8\x\y);
}

\foreach \a/\b/\x/\y in {
	3/7/0/0, 3/7/1/0, 4/8/2/0, 4/8/3/0, 1/6/0/1, 1/6/1/1, 0/5/2/1}
\draw
	(B\a\x\y) to[out=-110-40*\a, in=-110-40*\b] (B\b\x\y);

\foreach \a/\b/\x/\y in {
	0/5/0/0, 0/5/1/0, 2/6/2/0, 1/6/3/0, 4/8/0/1, 4/8/1/1, 3/7/2/1, 0/2/3/1, 4/6/3/1, 0/2/0/2, 5/7/0/2, 0/3/1/2, 5/7/1/2, 1/4/2/2, 6/8/2/2, 1/4/3/2, 6/8/3/2, 1/7/0/3, 3/5/0/3, 1/7/1/3, 3/5/1/3, 1/8/2/3, 4/6/2/3, 2/8/3/3, 4/6/3/3}
\draw[dash pattern=on 2pt off 1pt]
	(B\a\x\y) to[out=-110-40*\a, in=-110-40*\b] (B\b\x\y);
	

\draw
	(A500) .. controls (0,-0.8) and (0,1) .. (B800)
	(A600) .. controls (0.5,-1.1) and (2.2,-1.5) .. (B200)
	(A200) .. controls (-3,1.4) and (2,1.5) .. (B100);

\draw[dash pattern=on 2pt off 1pt]
	(A400) .. controls (-0.2,-1) and (0.2,-1) .. (B400)
	(A300) .. controls (-1.3,-1.7) and (0,-0.5) .. (B600);


\begin{scope}[shift={(4.5*1 cm, -2.7*0 cm)}]

\draw
	(A410) .. controls (-0.2,-1) and (0.2,-1) .. (B410)
	(A510) .. controls (-0.2,-0.8) and (0,-0.2)  .. (B610);

\draw[dash pattern=on 2pt off 1pt]
	(A210) .. controls (-2.6,-1.7) and (2.6,-1.7) .. (B210)
	(A610) to[out=50, in=180] (0.6,1.1) to[out=0, in=60] (B110)
	(A310) to[out=-80, in=-110] (-0.2,-0.7) to[out=70, in=120] (B810);
	
\end{scope}


\begin{scope}[shift={(4.5*2 cm, -2.7*0 cm)}]

\draw
	(A620) to[out=50, in=180] (0.6,1.1) to[out=0, in=60] (B120)
	(A220) .. controls (-2.7,1.3) and (0.8,1.4) .. (B020)
	(A320) .. controls (-1.4,-1.3) and (1.4,-1.3) .. (B320);

\draw[dash pattern=on 2pt off 1pt]
	(A520) .. controls (-0.2,-0.7) and (0.2,-0.7) .. (B520)
	(A420) .. controls (0,-1.4) and (0,0.7) .. (B720);
	
\end{scope}


\begin{scope}[shift={(4.5*3 cm, -2.7*0 cm)}]

\draw
	(A230) .. controls (-2.6,-1.7) and (2.6,-1.7) .. (B230)
	(A630) .. controls (0,-0.5) and (1.3,-1.7) .. (B330)
	(A530) .. controls (0.2,-0.7) and (-0.1,1.8) .. (B030);

\draw[dash pattern=on 2pt off 1pt]
	(A330) to[out=-80, in=-120] (-0.2,-0.7) to[out=60, in=210] (B730)
	(A430) .. controls (-0.4,-1.2) and (0.2,-0.7) .. (B530);
		
\end{scope}


\begin{scope}[shift={(4.5*0 cm, -2.7*1 cm)}]

\draw
	(A201) .. controls (-2.6,-1.7) and (2.6,-1.7) .. (B201)
	(A301) .. controls (-1.3,-1.6) and (0,-0.8) .. (B501)
	(A601) .. controls (0,-0.5) and (1.3,-1.7) .. (B301);
	
\draw[dash pattern=on 2pt off 1pt]
	(A401) .. controls (0.2,-1.5) and (-0.2,2) .. (B001)
	(A501) .. controls (0,-0.8) and (0,0.4) .. (B701);
	
\end{scope}


\begin{scope}[shift={(4.5*1 cm, -2.7*1 cm)}]

\draw
	(A311) to[out=-80, in=-120] (-0.2,-0.7) to[out=60, in=210] (B711)
	(A411) .. controls (0.2,-1.5) and (-0.2,2) .. (B011);
	
\draw[dash pattern=on 2pt off 1pt]
	(A511) .. controls (-0.2,-0.7) and (0.2,-0.7) .. (B511)
	(A211) .. controls (-2.6,-1.7) and (1.8,-1.5) .. (B311)
	(A611) .. controls (0,-0.5) and (2.2,-2.2) .. (B211);	
	
\end{scope}


\begin{scope}[shift={(4.5*2 cm, -2.7*1 cm)}]

\draw
	(A321) .. controls (-1.3,-1.7) and (0,-0.5) .. (B621)
	(A421) .. controls (0.1,-1.3) and (-0.1,1.4) .. (B821);

\draw[dash pattern=on 2pt off 1pt]
	(A221) .. controls (-2.6,-1.7) and (2.6,-1.7) .. (B221)
	(A621) to[out=50, in=180] (0.6,1.1) to[out=0, in=60] (B121)
	(B421) .. controls (0.4,-1.2) and (-0.2,-0.7) .. (A521); 	
	
\end{scope}


\begin{scope}[shift={(4.5*3 cm, -2.7*1 cm)}]

\draw
	(A531) .. controls (-0.2,-0.7) and (0.2,-0.7) .. (B531)
	(A331) .. controls (-1.8,-1.5) and (3.6,-1.7) .. (B131); 	
	
\draw[dash pattern=on 2pt off 1pt]	
	(A431) .. controls (-0.5,-1) and (1.5,-1.3) .. (B331)
	(A231) .. controls (-2.7,1.3) and (0,1.2) .. (B831)
	(A631) .. controls (0,-0.3) and (0,0.5) .. (B731);
	
\end{scope}


\begin{scope}[shift={(4.5*0 cm, -2.7*2 cm)}]

\draw
	(A302) .. controls (-1.3,-1.7) and (0,-0.5) .. (B602)
	(A502) .. controls (1,-2) and (3,-0.5) .. (B102); 

\draw[dash pattern=on 2pt off 1pt]
	(A602) .. controls (0,-0.5) and (1.3,-1.7) .. (B302)
	(A402) .. controls (0.1,-1.3) and (-0.1,1.4) .. (B802)
	(A202) .. controls (-2.6,-1.7) and (0.5,-1.5) .. (B402); 
		
\end{scope}


\begin{scope}[shift={(4.5*1 cm, -2.7*2 cm)}]

\draw
	(A312) to[out=180, in=-90] (-2,0) to[out=90, in=150] (B812)
	(A512) .. controls (0,-0.9) and (0.4,-1.2) .. (B412);
	
\draw[dash pattern=on 2pt off 1pt]
	(A612) to[out=50, in=180] (0.6,1.1) to[out=0, in=60] (B112)
	(A212) .. controls (-2.6,-1.7) and (2.6,-1.7) .. (B212)
	(A412) .. controls (-0.4,-1.2) and (0,-0.3) .. (B612); 
		
\end{scope}


\begin{scope}[shift={(4.5*2 cm, -2.7*2 cm)}]

\draw
	(A222) .. controls (-2.6,-1.7) and (2.6,-1.7) .. (B222)
	(A422) .. controls (0,-1.4) and (0,0.7) .. (B722)
	(A622) .. controls (0,-0.5) and (1.3,-1.7) .. (B322);

\draw[dash pattern=on 2pt off 1pt]
	(A322) .. controls (-1.3,-1.6) and (0,-0.8) .. (B522)
	(A522) .. controls (0.2,-0.7) and (-0.1,1.8) .. (B022);
			
\end{scope}


\begin{scope}[shift={(4.5*3 cm, -2.7*2 cm)}]

\draw
	(A332) to[out=180, in=-90] (-2,0) to[out=90, in=130] (B032)
	(A532) .. controls (-0.2,-0.7) and (0.2,-0.7) .. (B532);
	
\draw[dash pattern=on 2pt off 1pt]
	(A432) .. controls (0,-1.4) and (0,0.7) .. (B732)
	(A632) .. controls (0,-0.5) and (2.2,-2.2) .. (B232)
	(A232) .. controls (-2.6,-1.7) and (1.8,-1.5) .. (B332);
		
\end{scope}


\begin{scope}[shift={(4.5*0 cm, -2.7*3 cm)}]

\draw
	(A303) .. controls (-1.3,-1.7) and (0,-0.5) .. (B603)
	(A503) .. controls (1,-1.5) and (2.4,-1) .. (B203);

\draw[dash pattern=on 2pt off 1pt]
	(A403) .. controls (-0.2,-1) and (0.2,-1) .. (B403)
	(A203) .. controls (-2.7,1.3) and (0.8,1.4) .. (B003)
	(A603) .. controls (0,-0.2) and (0,1) .. (B803);
		
\end{scope}


\begin{scope}[shift={(4.5*1 cm, -2.7*3 cm)}]

\draw
	(A413) .. controls (-0.2,-1) and (0.2,-1) .. (B413)
	(A213) .. controls (-2.7,1.3) and (0,1.2) .. (B813)
	(A613) .. controls (0,0) and (0.5,1.7) .. (B013);
	
\draw[dash pattern=on 2pt off 1pt]
	(A513) .. controls (-0.2,-0.8) and (0,-0.2)  .. (B613)
	(A313) .. controls (-1.8,-1.5) and (2.6,-1.7) .. (B213);

\end{scope}


\begin{scope}[shift={(4.5*2 cm, -2.7*3 cm)}]

\draw
	(A223) .. controls (-2.6,-1.7) and (2.6,-1.7) .. (B223)
	(A423) .. controls (0,-1.4) and (0,0.7) .. (B723)
	(A623) .. controls (0,-0.5) and (1.3,-1.7) .. (B323);

\draw[dash pattern=on 2pt off 1pt]
	(A523) .. controls (-0.2,-0.7) and (0.2,-0.7) .. (B523)
	(A323) to[out=180, in=-90] (-2,0) to[out=90, in=130] (B023);
		
\end{scope}


\begin{scope}[shift={(4.5*3 cm, -2.7*3 cm)}]

\draw
	(A633) to[out=50, in=180] (0.6,1.1) to[out=0, in=60] (B133)
	(A233) .. controls (-2.7,1.3) and (0.8,1.4) .. (B033)
	(A433) .. controls (-0.4,-1.2) and (0.2,-0.7) .. (B533); 

\draw[dash pattern=on 2pt off 1pt]
	(A333) .. controls (-1.4,-1.3) and (1.4,-1.3) .. (B333)
	(A533) .. controls (0,-0.8) and (0,0.4) .. (B733);

\end{scope}


\foreach \x in {0,...,3}
\foreach \y in {0,...,3}
{
\begin{scope}[shift={(-1 cm +4.5*\x cm, -2.7*\y cm)}]

\foreach \a in {0,1,8}
\fill
	(90+40*\a:0.8) circle (0.06);

\foreach \a in {2,7}
\filldraw[fill=white]
	(90+40*\a:0.8) circle (0.06);

\fill[red]
	(90+40*3:0.8) circle (0.06);
\fill[blue]
	(90+40*4:0.8) circle (0.06);
\fill[green]
	(90+40*5:0.8) circle (0.06);
\fill[brown]
	(90+40*6:0.8) circle (0.06);
		
\end{scope}
}

\foreach \a/\x/\y in {
	2/0/0, 2/1/0, 1/2/0, 3/3/0, 
	3/0/1, 3/1/1, 2/2/1, 8/3/1, 
	4/0/2, 2/1/2, 3/2/2, 3/3/2, 
	0/0/3, 0/1/3, 3/2/3, 1/3/3 }
\filldraw[fill=white, shift={(1 cm +4.5*\x cm, -2.7*\y cm)}]
	(90-40*\a:0.8) circle (0.06);

\foreach \a/\x/\y in {
	1/0/0, 6/0/0,
	3/1/0, 8/1/0,
	0/2/0, 4/2/0,
	2/3/0, 7/3/0,
	2/0/1, 6/0/1,
	4/1/1, 8/1/1,
	3/2/1, 7/2/1,
	0/3/1, 2/3/1,
	5/0/2, 7/0/2,
	0/1/2, 3/1/2,
	2/2/2, 5/2/2,
	1/3/2, 4/3/2,
	1/0/3, 7/0/3,
	2/1/3, 8/1/3,
	0/2/3, 2/2/3,
	0/3/3, 3/3/3}
\fill[red, shift={(1 cm +4.5*\x cm, -2.7*\y cm)}]
	(90-40*\a:0.8) circle (0.06);

\foreach \a/\x/\y in {
	4/0/0, 7/0/0,
	0/1/0, 5/1/0,
	3/2/0, 7/2/0,
	5/3/0, 8/3/0,
	0/0/1, 5/0/1,
	1/1/1, 7/1/1,
	0/2/1, 6/2/1,
	1/3/1, 3/3/1,
	6/0/2, 8/0/2,
	6/1/2, 8/1/2,
	6/2/2, 8/2/2,
	0/3/2, 7/3/2,
	4/0/3, 6/0/3,
	3/1/3, 5/1/3,
	1/2/3, 8/2/3,
	4/3/3, 6/3/3}
\fill[blue, shift={(1 cm +4.5*\x cm, -2.7*\y cm)}]
	(90-40*\a:0.8) circle (0.06);

\foreach \a/\x/\y in {
	0/0/0, 5/0/0,
	4/1/0, 7/1/0,
	5/2/0, 8/2/0,
	1/3/0, 6/3/0,
	1/0/1, 7/0/1,
	0/1/1, 5/1/1,
	4/2/1, 8/2/1,
	4/3/1, 6/3/1,
	0/0/2, 2/0/2,
	5/1/2, 7/1/2,
	0/2/2, 7/2/2,
	6/3/2, 8/3/2,
	3/0/3, 5/0/3,
	4/1/3, 6/1/3,
	5/2/3, 7/2/3,
	5/3/3, 7/3/3}
\fill[green, shift={(1 cm +4.5*\x cm, -2.7*\y cm)}]
	(90-40*\a:0.8) circle (0.06);

\foreach \a/\x/\y in {
	3/0/0, 8/0/0,
	1/1/0, 6/1/0,
	2/2/0, 6/2/0,
	0/3/0, 4/3/0,
	4/0/1, 8/0/1,
	2/1/1, 6/1/1,
	1/2/1, 5/2/1,
	5/3/1, 7/3/1,
	1/0/2, 3/0/2,
	1/1/2, 4/1/2,
	1/2/2, 4/2/2,
	2/3/2, 5/3/2,
	2/0/3, 8/0/3,
	1/1/3, 7/1/3,
	4/2/3, 6/2/3,
	2/3/3, 8/3/3}
\fill[brown, shift={(1 cm +4.5*\x cm, -2.7*\y cm)}]
	(90-40*\a:0.8) circle (0.06);

\end{tikzpicture}
\caption{Tilings of $3{\bb P}^2$ by two congruent equilateral 9-gons.} 
\label{P2C}
\end{figure}

The algorithm above uses very primitive geometrical condition of the existence of positive angle solutions. The actual existence of prototiles as polygons with straight line edges requires further investigation. We do not have general results for the existence of the prototile except that some cases can be obviously realised by regular polygons. Even in such cases, we wish to have non-regular geometrical realisations that is not shared with the other tilings. For individual tilings, we may follow the idea in \cite{lwwy1} to show the geometrical realisability by actually constructing specific examples. We have verified the geometrical existence for all the tilings in Figures \ref{T2A}, \ref{P2B}, \ref{P2A}, \ref{P2C}, and we show two sets in Figures \ref{T2Ageom} and \ref{P2Cgeom}.

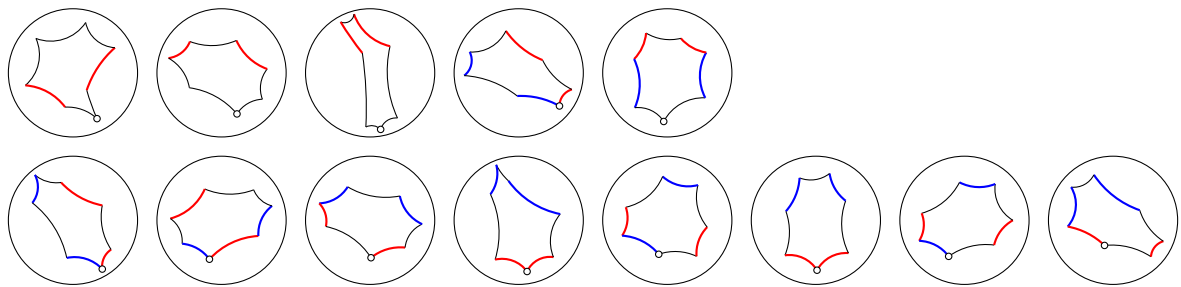
\begin{figure}[htp]
\centering
\begin{tikzpicture}[>=latex,scale=0.85]

\foreach \x in {0,...,4}
\draw
	(2.3*\x, 2.3) circle (1);
		
\foreach \x in {0,...,7}
\draw
	(2.3*\x, 0) circle (1);	


\begin{scope}[shift={(0*2.3,2.3)}]

\drawhypgeodesic[black]{0.37235}{-0.71426}{0.21315}{-0.27012}
\drawhypgeodesic[red, thick]{0.21315}{-0.27012}{0.64978}{0.39257}
\drawhypgeodesic[black]{0.64978}{0.39257}{0.19642}{0.78487}
\drawhypgeodesic[black]{0.19642}{0.78487}{-0.57328}{0.53708}
\drawhypgeodesic[black]{-0.57328}{0.53708}{-0.73680}{-0.19518}
\drawhypgeodesic[red, thick]{-0.73680}{-0.19518}{-0.12162}{-0.53496}
\drawhypgeodesic[black]{-0.12162}{-0.53496}{0.37235}{-0.71426}

\filldraw[fill=white] (0.37235,-0.71426) circle (0.05);

\end{scope}


\begin{scope}[shift={(1*2.3,2.3)}]

\drawhypgeodesic[black]{0.23887}{-0.64052}{0.63223}{-0.41092}
\drawhypgeodesic[black]{0.63223}{-0.41092}{0.70656}{0.06001}
\drawhypgeodesic[red, thick]{0.70656}{0.06001}{0.23025}{0.50535}
\drawhypgeodesic[black]{0.23025}{0.50535}{-0.48923}{0.48887}
\drawhypgeodesic[red, thick]{-0.48923}{0.48887}{-0.82041}{0.22969}
\drawhypgeodesic[black]{-0.82041}{0.22969}{-0.49828}{-0.23247}
\drawhypgeodesic[black]{-0.49828}{-0.23247}{0.23887}{-0.64052}

\filldraw[fill=white] (0.23887,-0.64052) circle (0.05);

\end{scope}


\begin{scope}[shift={(2*2.3,2.3)}]

\drawhypgeodesic[black]{0.16294}{-0.88229}{0.42314}{-0.70419}
\drawhypgeodesic[black]{0.42314}{-0.70419}{0.30843}{0.41590}
\drawhypgeodesic[red, thick]{0.30843}{0.41590}{-0.24948}{0.91594}
\drawhypgeodesic[black]{-0.24948}{0.91594}{-0.45640}{0.79381}
\drawhypgeodesic[red, thick]{-0.45640}{0.79381}{-0.12087}{0.31005}
\drawhypgeodesic[black]{-0.12087}{0.31005}{-0.06616}{-0.84473}
\drawhypgeodesic[black]{-0.06616}{-0.84473}{0.16294}{-0.88229}

\filldraw[fill=white] (0.16294,-0.88229) circle (0.05);

\end{scope}


\begin{scope}[shift={(3*2.3,2.3)}]

\drawhypgeodesic[red, thick]{0.63108}{-0.51695}{0.82318}{-0.25880}
\drawhypgeodesic[black]{0.82318}{-0.25880}{0.36958}{0.19925}
\drawhypgeodesic[red, thick]{0.36958}{0.19925}{-0.19898}{0.65481}
\drawhypgeodesic[black]{-0.19898}{0.65481}{-0.75833}{0.32539}
\drawhypgeodesic[blue, thick]{-0.75833}{0.32539}{-0.84325}{-0.04255}
\drawhypgeodesic[black]{-0.84325}{-0.04255}{-0.02327}{-0.36112}
\drawhypgeodesic[blue, thick]{-0.02327}{-0.36112}{0.63108}{-0.51695}

\filldraw[fill=white] (0.63108,-0.51695) circle (0.05);
  
\end{scope}


\begin{scope}[shift={(4*2.3,2.3)}]
\drawhypgeodesic[black]{-0.05539}{-0.75866}{0.58774}{-0.38939}
\drawhypgeodesic[blue, thick]{0.58774}{-0.38939}{0.60282}{0.31728}
\drawhypgeodesic[red, thick]{0.60282}{0.31728}{0.20990}{0.53864}
\drawhypgeodesic[black]{0.20990}{0.53864}{-0.32832}{0.62102}
\drawhypgeodesic[red, thick]{-0.32832}{0.62102}{-0.51384}{0.21519}
\drawhypgeodesic[blue, thick]{-0.51384}{0.21519}{-0.50291}{-0.54408}
\drawhypgeodesic[black]{-0.50291}{-0.54408}{-0.05539}{-0.75866}

\filldraw[fill=white] (-0.05539,-0.75866) circle (0.05);
  
\end{scope}


\begin{scope}[xshift=0*2.3 cm]

\drawhypgeodesic[red, thick]{0.45371}{-0.76088}{0.59077}{-0.45591}
\drawhypgeodesic[black]{0.59077}{-0.45591}{0.45399}{0.23260}
\drawhypgeodesic[red, thick]{0.45399}{0.23260}{-0.18403}{0.59066}
\drawhypgeodesic[black]{-0.18403}{0.59066}{-0.58910}{0.70827}
\drawhypgeodesic[blue, thick]{-0.58910}{0.70827}{-0.62911}{0.27208}
\drawhypgeodesic[black]{-0.62911}{0.27208}{-0.09620}{-0.58674}
\drawhypgeodesic[blue, thick]{-0.09620}{-0.58674}{0.45371}{-0.76088}

\filldraw[fill=white] (0.45371,-0.76088) circle (0.05);
  
\end{scope}


\begin{scope}[xshift=1*2.3 cm]

\drawhypgeodesic[red, thick]{-0.18747}{-0.60731}{0.57012}{-0.24422}
\drawhypgeodesic[blue, thick]{0.57012}{-0.24422}{0.78176}{0.22488}
\drawhypgeodesic[black]{0.78176}{0.22488}{0.49778}{0.47567}
\drawhypgeodesic[black]{0.49778}{0.47567}{-0.25948}{0.48846}
\drawhypgeodesic[red, thick]{-0.25948}{0.48846}{-0.79227}{0.02756}
\drawhypgeodesic[black]{-0.79227}{0.02756}{-0.61044}{-0.36504}
\drawhypgeodesic[blue, thick]{-0.61044}{-0.36504}{-0.18747}{-0.60731}

\filldraw[fill=white] (-0.18747,-0.60731) circle (0.05);
  
\end{scope}


\begin{scope}[xshift=2*2.3 cm]

\drawhypgeodesic[red, thick]{0.01376}{-0.58397}{0.54117}{-0.42546}
\drawhypgeodesic[black]{0.54117}{-0.42546}{0.80503}{-0.05988}
\drawhypgeodesic[blue, thick]{0.80503}{-0.05988}{0.46029}{0.37917}
\drawhypgeodesic[black]{0.46029}{0.37917}{-0.34724}{0.52215}
\drawhypgeodesic[blue, thick]{-0.34724}{0.52215}{-0.78787}{0.26461}
\drawhypgeodesic[red, thick]{-0.78787}{0.26461}{-0.68514}{-0.09663}
\drawhypgeodesic[black]{-0.68514}{-0.09663}{0.01376}{-0.58397}

\filldraw[fill=white] (0.01376,-0.58397) circle (0.05);
   
\end{scope}


\begin{scope}[xshift=3*2.3 cm]

\drawhypgeodesic[red, thick]{0.12963}{-0.79873}{0.53788}{-0.57426}
\drawhypgeodesic[black]{0.53788}{-0.57426}{0.64261}{0.09743}
\drawhypgeodesic[blue, thick]{0.64261}{0.09743}{-0.15898}{0.61576}
\drawhypgeodesic[black]{-0.15898}{0.61576}{-0.34770}{0.86621}
\drawhypgeodesic[blue, thick]{-0.34770}{0.86621}{-0.43802}{0.40678}
\drawhypgeodesic[black]{-0.43802}{0.40678}{-0.36541}{-0.61318}
\drawhypgeodesic[red, thick]{-0.36541}{-0.61318}{0.12963}{-0.79873}

\filldraw[fill=white] (0.12963,-0.79873) circle (0.05);
  
\end{scope}


\begin{scope}[xshift=4*2.3 cm]

\drawhypgeodesic[black]{-0.13226}{-0.52956}{0.45291}{-0.56173}
\drawhypgeodesic[red, thick]{0.45291}{-0.56173}{0.61787}{-0.10596}
\drawhypgeodesic[black]{0.61787}{-0.10596}{0.47514}{0.54988}
\drawhypgeodesic[blue, thick]{0.47514}{0.54988}{-0.07206}{0.68331}
\drawhypgeodesic[black]{-0.07206}{0.68331}{-0.64232}{0.20378}
\drawhypgeodesic[red, thick]{-0.64232}{0.20378}{-0.69928}{-0.23971}
\drawhypgeodesic[blue, thick]{-0.69928}{-0.23971}{-0.13226}{-0.52956}

\filldraw[fill=white] (-0.13226,-0.52956) circle (0.05);
   
\end{scope}


\begin{scope}[xshift=5*2.3 cm]

\drawhypgeodesic[red, thick]{0.01787}{-0.77854}{0.50439}{-0.51112}
\drawhypgeodesic[black]{0.50439}{-0.51112}{0.46113}{0.30445}
\drawhypgeodesic[blue, thick]{0.46113}{0.30445}{0.21227}{0.72981}
\drawhypgeodesic[black]{0.21227}{0.72981}{-0.25251}{0.66118}
\drawhypgeodesic[blue, thick]{-0.25251}{0.66118}{-0.46439}{0.14063}
\drawhypgeodesic[black]{-0.46439}{0.14063}{-0.47875}{-0.54641}
\drawhypgeodesic[red, thick]{-0.47875}{-0.54641}{0.01787}{-0.77854}

\filldraw[fill=white] (0.01787,-0.77854) circle (0.05);
  
\end{scope}


\begin{scope}[xshift=6*2.3 cm]

\drawhypgeodesic[black]{-0.24451}{-0.56244}{0.45927}{-0.38593}
\drawhypgeodesic[red, thick]{0.45927}{-0.38593}{0.74978}{0.00048}
\drawhypgeodesic[black]{0.74978}{0.00048}{0.47632}{0.56682}
\drawhypgeodesic[blue, thick]{0.47632}{0.56682}{-0.07706}{0.59197}
\drawhypgeodesic[black]{-0.07706}{0.59197}{-0.66266}{0.10922}
\drawhypgeodesic[red, thick]{-0.66266}{0.10922}{-0.70113}{-0.32013}
\drawhypgeodesic[blue, thick]{-0.70113}{-0.32013}{-0.24451}{-0.56244}

\filldraw[fill=white] (-0.24451,-0.56244) circle (0.05);
  
\end{scope}


\begin{scope}[xshift=7*2.3 cm]

\drawhypgeodesic[black]{-0.13187}{-0.39203}{0.58733}{-0.56786}
\drawhypgeodesic[red, thick]{0.58733}{-0.56786}{0.77888}{-0.32646}
\drawhypgeodesic[black]{0.77888}{-0.32646}{0.41286}{0.15757}
\drawhypgeodesic[blue, thick]{0.41286}{0.15757}{-0.29653}{0.70964}
\drawhypgeodesic[black]{-0.29653}{0.70964}{-0.64901}{0.51714}
\drawhypgeodesic[blue, thick]{-0.64901}{0.51714}{-0.70166}{-0.09801}
\drawhypgeodesic[red, thick]{-0.70166}{-0.09801}{-0.13187}{-0.39203}

\filldraw[fill=white] (-0.13187,-0.39203) circle (0.05);
  
\end{scope}

\end{tikzpicture}
\caption{Geometrical realisations for tilings in Figure \ref{T2A}. In the first row, the two tiles have the same orientation. In the second row, the two tiles have different orientation.  $\circ$ is the location of the corner $0$.} 
\label{T2Ageom}
\end{figure}

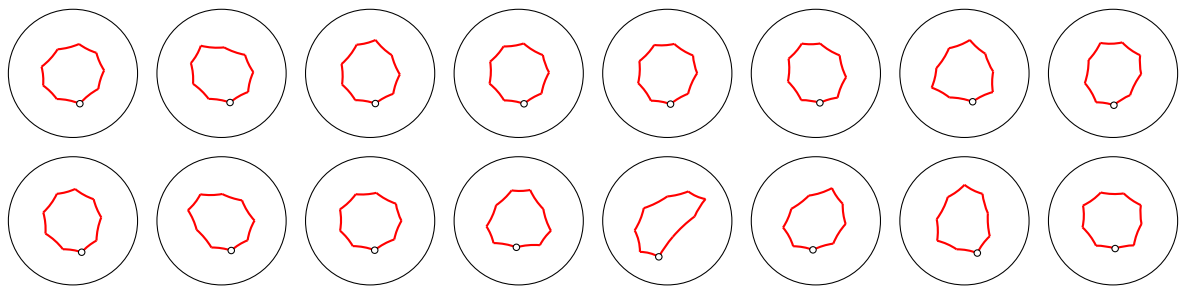
\begin{figure}[htp]
\centering
\begin{tikzpicture}[>=latex,scale=0.85]

\foreach \x in {0,...,7}
\foreach \y in {0,1}
\draw
	(2.3*\x, -2.3*\y) circle (1);

\begin{scope}[shift={(0,0)}]

\drawhypgeodesic[red, thick]{0.10659}{-0.47319}{0.39284}{-0.24247}
\drawhypgeodesic[red, thick]{0.39284}{-0.24247}{0.48294}{0.04696}
\drawhypgeodesic[red, thick]{0.48294}{0.04696}{0.36286}{0.32076}
\drawhypgeodesic[red, thick]{0.36286}{0.32076}{0.09354}{0.45798}
\drawhypgeodesic[red, thick]{0.09354}{0.45798}{-0.23847}{0.37564}
\drawhypgeodesic[red, thick]{-0.23847}{0.37564}{-0.48651}{0.10350}
\drawhypgeodesic[red, thick]{-0.48651}{0.10350}{-0.46249}{-0.18919}
\drawhypgeodesic[red, thick]{-0.46249}{-0.18919}{-0.25130}{-0.39998}
\drawhypgeodesic[red, thick]{-0.25130}{-0.39998}{0.10659}{-0.47319}

\filldraw[fill=white] (0.10659,-0.47319) circle (0.05);

\end{scope}

\begin{scope}[shift={(2.3*1,0)}]
\drawhypgeodesic[red, thick]{0.13152}{-0.45462}{0.41183}{-0.29048}
\drawhypgeodesic[red, thick]{0.41183}{-0.29048}{0.49198}{0.02029}
\drawhypgeodesic[red, thick]{0.49198}{0.02029}{0.36393}{0.28897}
\drawhypgeodesic[red, thick]{0.36393}{0.28897}{0.03890}{0.40541}
\drawhypgeodesic[red, thick]{0.03890}{0.40541}{-0.32077}{0.43139}
\drawhypgeodesic[red, thick]{-0.32077}{0.43139}{-0.47488}{0.16071}
\drawhypgeodesic[red, thick]{-0.47488}{0.16071}{-0.44213}{-0.16343}
\drawhypgeodesic[red, thick]{-0.44213}{-0.16343}{-0.20038}{-0.39825}
\drawhypgeodesic[red, thick]{-0.20038}{-0.39825}{0.13152}{-0.45462}

\filldraw[fill=white] (0.13152,-0.45462) circle (0.05);
\end{scope}

\begin{scope}[shift={(2.3*2,0)}]
\drawhypgeodesic[red, thick]{0.08091}{-0.47044}{0.35798}{-0.31901}
\drawhypgeodesic[red, thick]{0.35798}{-0.31901}{0.46028}{-0.01692}
\drawhypgeodesic[red, thick]{0.46028}{-0.01692}{0.33517}{0.28459}
\drawhypgeodesic[red, thick]{0.33517}{0.28459}{0.08390}{0.52155}
\drawhypgeodesic[red, thick]{0.08390}{0.52155}{-0.20920}{0.44764}
\drawhypgeodesic[red, thick]{-0.20920}{0.44764}{-0.44307}{0.14524}
\drawhypgeodesic[red, thick]{-0.44307}{0.14524}{-0.43513}{-0.17462}
\drawhypgeodesic[red, thick]{-0.43513}{-0.17462}{-0.23084}{-0.41803}
\drawhypgeodesic[red, thick]{-0.23084}{-0.41803}{0.08091}{-0.47044}

\filldraw[fill=white] (0.08091,-0.47044) circle (0.05);
\end{scope}

\begin{scope}[shift={(2.3*3,0)}]
\drawhypgeodesic[red, thick]{0.08226}{-0.47468}{0.35364}{-0.31422}
\drawhypgeodesic[red, thick]{0.35364}{-0.31422}{0.47000}{0.01386}
\drawhypgeodesic[red, thick]{0.47000}{0.01386}{0.37385}{0.28518}
\drawhypgeodesic[red, thick]{0.37385}{0.28518}{0.07687}{0.46678}
\drawhypgeodesic[red, thick]{0.07687}{0.46678}{-0.23499}{0.42056}
\drawhypgeodesic[red, thick]{-0.23499}{0.42056}{-0.44212}{0.18442}
\drawhypgeodesic[red, thick]{-0.44212}{0.18442}{-0.45217}{-0.16024}
\drawhypgeodesic[red, thick]{-0.45217}{-0.16024}{-0.22734}{-0.42166}
\drawhypgeodesic[red, thick]{-0.22734}{-0.42166}{0.08226}{-0.47468}

\filldraw[fill=white] (0.08226,-0.47468) circle (0.05);
\end{scope}

\begin{scope}[shift={(2.3*4,0)}]
\drawhypgeodesic[red, thick]{0.04990}{-0.47947}{0.32196}{-0.31828}
\drawhypgeodesic[red, thick]{0.32196}{-0.31828}{0.46249}{0.00314}
\drawhypgeodesic[red, thick]{0.46249}{0.00314}{0.39852}{0.27956}
\drawhypgeodesic[red, thick]{0.39852}{0.27956}{0.10814}{0.45963}
\drawhypgeodesic[red, thick]{0.10814}{0.45963}{-0.20297}{0.44329}
\drawhypgeodesic[red, thick]{-0.20297}{0.44329}{-0.43284}{0.19096}
\drawhypgeodesic[red, thick]{-0.43284}{0.19096}{-0.45178}{-0.15217}
\drawhypgeodesic[red, thick]{-0.45178}{-0.15217}{-0.25343}{-0.42666}
\drawhypgeodesic[red, thick]{-0.25343}{-0.42666}{0.04990}{-0.47947}

\filldraw[fill=white] (0.04990,-0.47947) circle (0.05);
\end{scope}

\begin{scope}[shift={(2.3*5,0)}]
\drawhypgeodesic[red, thick]{0.06210}{-0.46008}{0.33683}{-0.37886}
\drawhypgeodesic[red, thick]{0.33683}{-0.37886}{0.46865}{-0.05956}
\drawhypgeodesic[red, thick]{0.46865}{-0.05956}{0.36859}{0.28115}
\drawhypgeodesic[red, thick]{0.36859}{0.28115}{0.06549}{0.46664}
\drawhypgeodesic[red, thick]{0.06549}{0.46664}{-0.21846}{0.46494}
\drawhypgeodesic[red, thick]{-0.21846}{0.46494}{-0.42304}{0.22956}
\drawhypgeodesic[red, thick]{-0.42304}{0.22956}{-0.43299}{-0.12604}
\drawhypgeodesic[red, thick]{-0.43299}{-0.12604}{-0.22718}{-0.41776}
\drawhypgeodesic[red, thick]{-0.22718}{-0.41776}{0.06210}{-0.46008}

\filldraw[fill=white] (0.06210,-0.46008) circle (0.05);
\end{scope}

\begin{scope}[shift={(2.3*6,0)}]
\drawhypgeodesic[red, thick]{0.12670}{-0.44279}{0.44006}{-0.29166}
\drawhypgeodesic[red, thick]{0.44006}{-0.29166}{0.44470}{0.02827}
\drawhypgeodesic[red, thick]{0.44470}{0.02827}{0.32224}{0.30776}
\drawhypgeodesic[red, thick]{0.32224}{0.30776}{0.08390}{0.52123}
\drawhypgeodesic[red, thick]{0.08390}{0.52123}{-0.23963}{0.39332}
\drawhypgeodesic[red, thick]{-0.23963}{0.39332}{-0.43522}{0.08247}
\drawhypgeodesic[red, thick]{-0.43522}{0.08247}{-0.50837}{-0.22274}
\drawhypgeodesic[red, thick]{-0.50837}{-0.22274}{-0.23441}{-0.37586}
\drawhypgeodesic[red, thick]{-0.23441}{-0.37586}{0.12670}{-0.44279}

\filldraw[fill=white] (0.12670,-0.44279) circle (0.05);
\end{scope}

\begin{scope}[shift={(2.3*7,0)}]
\drawhypgeodesic[red, thick]{0.01327}{-0.49703}{0.26417}{-0.34010}
\drawhypgeodesic[red, thick]{0.26417}{-0.34010}{0.43528}{-0.00963}
\drawhypgeodesic[red, thick]{0.43528}{-0.00963}{0.41934}{0.28337}
\drawhypgeodesic[red, thick]{0.41934}{0.28337}{0.14258}{0.48305}
\drawhypgeodesic[red, thick]{0.14258}{0.48305}{-0.19659}{0.47563}
\drawhypgeodesic[red, thick]{-0.19659}{0.47563}{-0.37865}{0.20896}
\drawhypgeodesic[red, thick]{-0.37865}{0.20896}{-0.43678}{-0.15251}
\drawhypgeodesic[red, thick]{-0.43678}{-0.15251}{-0.26261}{-0.45173}
\drawhypgeodesic[red, thick]{-0.26261}{-0.45173}{0.01327}{-0.49703}

\filldraw[fill=white] (0.01327,-0.497036) circle (0.05);
\end{scope}

\begin{scope}[shift={(0,-2.3)}]
\drawhypgeodesic[red, thick]{0.13476}{-0.48966}{0.36430}{-0.30803}
\drawhypgeodesic[red, thick]{0.36430}{-0.30803}{0.43855}{0.05402}
\drawhypgeodesic[red, thick]{0.43855}{0.05402}{0.32009}{0.33730}
\drawhypgeodesic[red, thick]{0.32009}{0.33730}{0.03323}{0.49667}
\drawhypgeodesic[red, thick]{0.03323}{0.49667}{-0.24931}{0.42156}
\drawhypgeodesic[red, thick]{-0.24931}{0.42156}{-0.45976}{0.13074}
\drawhypgeodesic[red, thick]{-0.45976}{0.13074}{-0.42631}{-0.19885}
\drawhypgeodesic[red, thick]{-0.42631}{-0.19885}{-0.15556}{-0.44375}
\drawhypgeodesic[red, thick]{-0.15556}{-0.44375}{0.13476}{-0.48966}

\filldraw[fill=white] (0.13476,-0.48966) circle (0.05);
\end{scope}

\begin{scope}[shift={(2.3*1,-2.3)}]
\drawhypgeodesic[red, thick]{0.15016}{-0.46306}{0.40406}{-0.30402}
\drawhypgeodesic[red, thick]{0.40406}{-0.30402}{0.51103}{0.00392}
\drawhypgeodesic[red, thick]{0.51103}{0.00392}{0.31715}{0.31929}
\drawhypgeodesic[red, thick]{0.31715}{0.31929}{0.01031}{0.41838}
\drawhypgeodesic[red, thick]{0.01031}{0.41838}{-0.32861}{0.41382}
\drawhypgeodesic[red, thick]{-0.32861}{0.41382}{-0.52087}{0.16761}
\drawhypgeodesic[red, thick]{-0.52087}{0.16761}{-0.38592}{-0.13959}
\drawhypgeodesic[red, thick]{-0.38592}{-0.13959}{-0.15730}{-0.41635}
\drawhypgeodesic[red, thick]{-0.15730}{-0.41635}{0.15016}{-0.46306}

\filldraw[fill=white] (0.15016,-0.46306) circle (0.05);
\end{scope}

\begin{scope}[shift={(2.3*2,-2.3)}]
\drawhypgeodesic[red, thick]{0.07083}{-0.45839}{0.37383}{-0.29082}
\drawhypgeodesic[red, thick]{0.37383}{-0.29082}{0.48561}{0.00011}
\drawhypgeodesic[red, thick]{0.48561}{0.00011}{0.39936}{0.26739}
\drawhypgeodesic[red, thick]{0.39936}{0.26739}{0.09520}{0.43670}
\drawhypgeodesic[red, thick]{0.09520}{0.43670}{-0.22343}{0.41503}
\drawhypgeodesic[red, thick]{-0.22343}{0.41503}{-0.46710}{0.17959}
\drawhypgeodesic[red, thick]{-0.46710}{0.17959}{-0.46562}{-0.15515}
\drawhypgeodesic[red, thick]{-0.46562}{-0.15515}{-0.26867}{-0.39446}
\drawhypgeodesic[red, thick]{-0.26867}{-0.39446}{0.07083}{-0.45839}

\filldraw[fill=white] (0.07083,-0.45839) circle (0.05);
\end{scope}

\begin{scope}[shift={(2.3*3,-2.3)}]
\drawhypgeodesic[red, thick]{-0.03588}{-0.41312}{0.32822}{-0.37985}
\drawhypgeodesic[red, thick]{0.32822}{-0.37985}{0.49942}{-0.15513}
\drawhypgeodesic[red, thick]{0.49942}{-0.15513}{0.38167}{0.18370}
\drawhypgeodesic[red, thick]{0.38167}{0.18370}{0.17714}{0.48146}
\drawhypgeodesic[red, thick]{0.17714}{0.48146}{-0.11000}{0.47497}
\drawhypgeodesic[red, thick]{-0.11000}{0.47497}{-0.35065}{0.24257}
\drawhypgeodesic[red, thick]{-0.35065}{0.24257}{-0.49963}{-0.08752}
\drawhypgeodesic[red, thick]{-0.49963}{-0.08752}{-0.39029}{-0.34707}
\drawhypgeodesic[red, thick]{-0.39029}{-0.34707}{-0.03588}{-0.41312}

\filldraw[fill=white] (-0.03588,-0.4131) circle (0.05);
\end{scope}

\begin{scope}[shift={(2.3*4,-2.3)}]
\drawhypgeodesic[red, thick]{-0.13144}{-0.56184}{0.08606}{-0.24892}
\drawhypgeodesic[red, thick]{0.08606}{-0.24892}{0.42352}{0.06798}
\drawhypgeodesic[red, thick]{0.42352}{0.06798}{0.59159}{0.33966}
\drawhypgeodesic[red, thick]{0.59159}{0.33966}{0.32439}{0.45878}
\drawhypgeodesic[red, thick]{0.32439}{0.45878}{-0.00058}{0.38033}
\drawhypgeodesic[red, thick]{-0.00058}{0.38033}{-0.36433}{0.19753}
\drawhypgeodesic[red, thick]{-0.36433}{0.19753}{-0.50549}{-0.14892}
\drawhypgeodesic[red, thick]{-0.50549}{-0.14892}{-0.42371}{-0.48461}
\drawhypgeodesic[red, thick]{-0.42371}{-0.48461}{-0.13144}{-0.56184}

\filldraw[fill=white] (-0.13144,-0.56184) circle (0.05);
\end{scope}

\begin{scope}[shift={(2.3*5,-2.3)}]
\drawhypgeodesic[red, thick]{-0.04503}{-0.45305}{0.26926}{-0.35560}
\drawhypgeodesic[red, thick]{0.26926}{-0.35560}{0.46097}{-0.04824}
\drawhypgeodesic[red, thick]{0.46097}{-0.04824}{0.41557}{0.26669}
\drawhypgeodesic[red, thick]{0.41557}{0.26669}{0.24905}{0.51076}
\drawhypgeodesic[red, thick]{0.24905}{0.51076}{-0.10886}{0.39384}
\drawhypgeodesic[red, thick]{-0.10886}{0.39384}{-0.38069}{0.18751}
\drawhypgeodesic[red, thick]{-0.38069}{0.18751}{-0.50863}{-0.10527}
\drawhypgeodesic[red, thick]{-0.50863}{-0.10527}{-0.35164}{-0.39664}
\drawhypgeodesic[red, thick]{-0.35164}{-0.39664}{-0.04503}{-0.45305}

\filldraw[fill=white] (-0.04503,-0.45305) circle (0.05);
\end{scope}

\begin{scope}[shift={(2.3*6,-2.3)}]
\drawhypgeodesic[red, thick]{0.19911}{-0.50272}{0.39195}{-0.23947}
\drawhypgeodesic[red, thick]{0.39195}{-0.23947}{0.36270}{0.11793}
\drawhypgeodesic[red, thick]{0.36270}{0.11793}{0.27903}{0.42164}
\drawhypgeodesic[red, thick]{0.27903}{0.42164}{0.00072}{0.56193}
\drawhypgeodesic[red, thick]{0.00072}{0.56193}{-0.25852}{0.37046}
\drawhypgeodesic[red, thick]{-0.25852}{0.37046}{-0.43019}{0.02506}
\drawhypgeodesic[red, thick]{-0.43019}{0.02506}{-0.42311}{-0.31034}
\drawhypgeodesic[red, thick]{-0.42311}{-0.31034}{-0.12168}{-0.44449}
\drawhypgeodesic[red, thick]{-0.12168}{-0.44449}{0.19911}{-0.50272}

\filldraw[fill=white] (0.19911,-0.50272) circle (0.05);
\end{scope}

\begin{scope}[shift={(2.3*7,-2.3)}]
\drawhypgeodesic[red, thick]{0.03265}{-0.43255}{0.32269}{-0.38292}
\drawhypgeodesic[red, thick]{0.32269}{-0.38292}{0.44102}{-0.04855}
\drawhypgeodesic[red, thick]{0.44102}{-0.04855}{0.43051}{0.24499}
\drawhypgeodesic[red, thick]{0.43051}{0.24499}{0.16554}{0.42774}
\drawhypgeodesic[red, thick]{0.16554}{0.42774}{-0.19035}{0.44192}
\drawhypgeodesic[red, thick]{-0.19035}{0.44192}{-0.46518}{0.23896}
\drawhypgeodesic[red, thick]{-0.46518}{0.23896}{-0.47270}{-0.10208}
\drawhypgeodesic[red, thick]{-0.47270}{-0.10208}{-0.26419}{-0.38750}
\drawhypgeodesic[red, thick]{-0.26419}{-0.38750}{0.03265}{-0.43255}

\filldraw[fill=white] (0.03265,-0.43255) circle (0.05);
\end{scope}

\end{tikzpicture}
\caption{Geometrical realisations for tilings in Figure \ref{P2C}.} 
\label{P2Cgeom}
\end{figure}

\section{Distinct Edge Lengths}
\label{edge}

In an edge-to-edge tilings of a surface of Euler number $\chi<0$ by $f$ congruent $n$-gons, $n\ge 7$, we discuss the condition that all $n$ edges have distinct edge length. The assumptions will not be repeated in the subsequent propositions.

Combinatorially, all edges have distinct edge length if and only if the edge pairs in the multiple planar diagram are $\bar{i}_p\bar{i}_q$. An immediate consequence is that $f$ must be even. 

\begin{proposition}
If the $n$-gon has $n$ distinct edge lengths, then $f$ is even, and there is no degree $3$ vertex.
\end{proposition}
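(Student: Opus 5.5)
The plan is to read off both claims from the combinatorial characterisation stated just before the proposition: all $n$ edges have distinct lengths exactly when every edge pair in the multiple planar diagram has the form $\bar{i}_p\bar{i}_q$, i.e.\ paired edges carry the same label $i\in{\bb Z}_n$.

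\emph{$f$ is even.} Fix $i\in{\bb Z}_n$. The edges with label $i$ are $\bar{i}_1,\dots,\bar{i}_f$, so there are exactly $f$ of them. Every edge pair joins two edges with the same label, so the pairing restricts to a perfect matching of the set $\{\bar{i}_1,\dots,\bar{i}_f\}$. Hence $f$ is even. Whether $\bar{i}_p\bar{i}_p$ can occur does not matter here, since a pair always consists of two distinct edges.

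\emph{No degree $3$ vertex.} Let $v$ be a vertex of degree $k$, and go once around $v$. We meet $k$ edge-ends and $k$ corners, alternating. A corner $i_p$ of a tile lies between the edges $\overline{i-1}_p$ and $\bar{i}_p$ of that tile, so the two edge-ends on either side of a corner have labels differing by exactly $\pm1$ in ${\bb Z}_n$. Consecutive corners at $v$ share an edge, and its two sides have the same label because the pairing preserves labels. So we may assign a single label to each of the $k$ edges at $v$. This gives a cyclic sequence $e_1,\dots,e_k\in{\bb Z}_n$ with $e_{t+1}-e_t=\epsilon_t\in\{\pm1\}$. Equivalently, in terms of the vertex-to-edge conversion formulas before Proposition \ref{avs2}, consecutive corners $(i_p)_\sigma(j_q)_\tau$ force $j=i-1$, $i$, $i$, $i+1$ in the four sign cases.

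Closing the cycle gives $\sum_{t=1}^k\epsilon_t\equiv 0 \pmod n$. For $k=3$ this sum is an odd integer with absolute value at most $3$, so it is nonzero and has absolute value less than $n$, since $n\ge 7$. This is a contradiction, so there is no degree $3$ vertex.

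The same argument shows that every vertex has even degree, or degree at least $n$. The only delicate point is checking that the orientation decorations $\pm$ do not spoil the $\pm1$ bookkeeping. This is handled by working with edge labels rather than corner labels: a twisted pair reverses the direction of traversal inside the next tile but still changes the label by $\pm1$ across each corner.
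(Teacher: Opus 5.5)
Your proof is correct. The parity of $f$ is argued as in the paper, where it is stated as an immediate consequence of every pair being $\bar{i}_p\bar{i}_q$. For the degree-$3$ claim, however, the paper argues directly with lengths rather than with labels.

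The paper takes the three tiles $t,t_1,t_2$ at a degree-$3$ vertex. Tile $t$ has consecutive edges of lengths $a,b,c$, with its corner at the vertex between $a$ and $b$. The third edge $x$ at the vertex is adjacent to $b$ in $t_1$, so $x\in\{a,c\}$. If $x=a$, then $t_2$ has two edges of length $a$. If $x=c$, then $a$ and $c$ are adjacent in $t_2$, which is impossible because they are separated by $b$ in $t$.

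In your notation this is exactly the split $\epsilon_1+\epsilon_2=0$ versus $\epsilon_1=\epsilon_2$ for $k=3$, so the mechanism is the same. Your closed-walk count replaces the case analysis with a single divisibility condition. It also yields more: a vertex of odd degree has degree at least $n$, and for even $n$ every vertex has even degree.

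The orientation bookkeeping you flag as delicate is automatic with the paper's conventions. Going around a vertex in its circular order, crossing a corner decorated $\sigma$ changes the edge label by exactly $-\sigma$. This is consistent with your four cases $j=i-1,i,i,i+1$. So your closing condition says that $n$ divides the number of $+$ corners minus the number of $-$ corners at each vertex.

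The paper's version is shorter for the single fact needed here and uses no labels or decorations. Yours is the natural generalisation.
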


\begin{proof}
The condition of no degree $3$ vertex is already proved in \cite[Proposition 5]{gsy} and \cite[Lemma 9]{wy1}. We reproduce the proof here.

In Figure \ref{elength}, we consider a degree 3 vertex $\bullet$ and three tiles $t,t_1,t_2$ around it. Suppose all edge lengths are distinct, and three edges of $t$ have lengths $a,b,c$. Since the edge length $x$ is adjacent to $b$ in $t_1$, and $t_1,t_2$ are congruent, and all edges have distinct lengths, we know $x=a$ or $c$. If $x=a$, then $t_2$ has two edges of length $a$. If $x=c$, then $a$ and $c$ are adjacent in $t_2$, contradicting the assumption that $t,t_2$ are congruent. 
\end{proof}

\begin{figure}[htp]
\centering
\begin{tikzpicture}[>=latex,scale=1]

\foreach \a in {0,1,2}
\draw[rotate=120*\a]
	(0,0) -- (1,0);

\draw
	(1,0) -- ++(60:1);
		
\fill
	(0,0) circle (0.05);

\begin{scope}[font=\footnotesize]

\node at (-0.13,0.5) {$a$};
\node at (0.5,0.15) {$b$};
\node at (1.13,0.5) {$c$};
\node at (-0.13,-0.5) {$x$};

\node at (0.5,0.8) {$t$};
\node at (-0.7,0) {$t_2$};
\node at (0.5,-0.8) {$t_1$};

\end{scope}
	
\end{tikzpicture}
\caption{Degree $3$ vertex implies some edge lengths are equal.} 
\label{elength}
\end{figure}
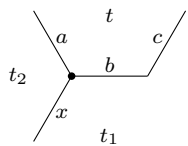

In \cite{lwwy1}, we showed that $f$ reaches the maximal value $\frac{-6\chi}{n-6}$ if and only if all vertices have degree 3. Therefore we expect that, if $f$ is close to the maximal value, then the tiling has degree 3 vertex. We assume $f=\frac{-6\chi}{(1+\epsilon)(n-6)}$, and find how small $\epsilon>0$ needs to be in order for the tiling to necessarily have degree 3 vertex. The idea for the subsequent argument comes from \cite[Section 3.2]{cly5}.

In the usual equalities
\begin{align}
v-e+f &=\chi, \nonumber \\
v &= v_3+v_4+v_5+\cdots, \nonumber \\
nf=2e &= 3v_3+4v_4+5v_5+\cdots
=3v+v_4+2v_5+\cdots, \label{eq3}
\end{align}
we candel $v_3,e,v$ and get
\[
v_4+2v_5+\cdots=nf-3\chi-3e+3f
=\frac{6-n}{2}f-3\chi
=\frac{n-6}{2}\epsilon f.
\]
This is the same as
\[
nf=\frac{2n}{\epsilon(n-6)}\sum_{k\ge 4}(k-3)v_k.
\]
We compare the coefficient $\frac{2n}{\epsilon(n-6)}(k-3)$ of $v_k$ in the equality above with the coefficient $k$ of $v_k$ in \eqref{eq3}. If $\epsilon\le \frac{n}{2(n-6)}$, then we get $\frac{2n}{\epsilon(n-6)}(4-3)\ge 4$, and $\frac{2n}{\epsilon(n-6)}(k-3)>k$ for $k>4$. Then to get $v_3=0$, we need $\epsilon= \frac{n}{2(n-6)}$ and all $v_k=0$ for $k\ge 5$. Since $\epsilon\ge  \frac{n}{2(n-6)}$ is the same as $f\ge \frac{-4\chi}{n-4}$, we conclude the following. 

\begin{proposition}
If $f>\frac{-4\chi}{n-4}$, then some edges in the $n$-gon have the same edge length. If $f=\frac{-4\chi}{n-4}$ and all edges have distinct lengths, then all vertices have degree $4$.
\end{proposition}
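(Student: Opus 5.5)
We argue by contradiction via vertex counting, combining the previous proposition with the Euler relations \eqref{eq3}. Assume all $n$ edges have distinct lengths. By the previous proposition there is no degree $3$ vertex, so $v_3=0$, and every vertex has degree $\ge 4$. Equivalently, $v_3=0$ means $\sum_{k\ge 4}(k-3)v_k = v$, i.e.\ $2e=\sum_k kv_k\ge 4v$ with equality exactly when all vertices have degree $4$.

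Next we turn this into a bound on $f$. From $v-e+f=\chi$ and $e=\frac{nf}{2}$, we get $v=\chi+\frac{n-2}{2}f$. The inequality $nf=2e\ge 4v$ then reads
\[
nf\ \ge\ 4\chi+2(n-2)f,
\]
which rearranges to $(n-4)f\le -4\chi$, that is $f\le \frac{-4\chi}{n-4}$. So if $f>\frac{-4\chi}{n-4}$, the assumption of distinct lengths fails, giving the first claim. If $f=\frac{-4\chi}{n-4}$, equality must hold in $2e\ge 4v$, which forces $v_k=0$ for all $k\ge 5$; hence all vertices have degree $4$, the second claim.

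This repackages the $\epsilon$-computation preceding the statement. With $f=\frac{-6\chi}{(1+\epsilon)(n-6)}$, the identity $v_4+2v_5+\cdots=\frac{n-6}{2}\epsilon f$ and the coefficient comparison show that $v_3=0$ forces $\epsilon\ge\frac{n}{2(n-6)}$, with equality only when $v_k=0$ for $k\ge5$. The translation $\epsilon\ge \frac{n}{2(n-6)}\iff f\le\frac{-4\chi}{n-4}$ is routine algebra. I would present the direct inequality above as the proof and cite the $\epsilon$ discussion as motivation. There is no real obstacle: the only ingredient beyond Euler's formula is the absence of degree $3$ vertices, which the previous proposition supplies. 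One should just note that $\chi<0$ keeps the bounds positive; the sign does not affect the argument.
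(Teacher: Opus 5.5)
Your proof is correct and is essentially the paper's argument: both combine $v_3=0$ from the previous proposition with Euler's formula and $nf=2e=\sum_k kv_k$, and your inequality $2e\ge 4v$ is what the paper's coefficient comparison in the $\epsilon$-parametrisation establishes, only stated more directly (and without dividing by $\epsilon$). One slip to fix: with $v_3=0$ you have $\sum_{k\ge 4}(k-3)v_k\ge v$, not $=v$, though the inequality $2e\ge 4v$ that you actually use is right.
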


The critical value $\frac{-4\chi}{n-4}$ divides the range $\frac{-2\chi}{n-2}<f\le \frac{-6\chi}{n-6}$ for $f$ into two parts. For all edges to have distinct lengths, $f$ needs to be in the lower part of the range.

For two tile tilings, we have a complete description of all tilings such that all edges have distinct lengths. The double planar diagram consists of $(\bar{i}_1\bar{i}_2)_{\sigma}$ (the $i$-th edges in tiles 1 and 2, {\em not} $i_1$-th edge and $i_2$-th edge). By the conditions for multiple planar diagram stated after Proposition \ref{avs2}, the only requirement is that $(\bar{i}_1\bar{i}_2)_-$ and $(\overline{i+1}_1\overline{i+1}_2)_-$ cannot both be edge pairs. This means twisted pairs are not adjacent. In other words, the twisted edge pairs are $(\bar{i}_1\bar{i}_2)_-$, for $i=k_1,k_2,\dots,k_{\tau}\in {\bb Z}_n$ satisfying ($k_{\tau+1}=k_1+n$)
\[
0\le k_1<k_2<\dots<k_{\tau}<n,\quad 
k_{p+1}-k_p\ge 2.
\]
Between the twisted pairs $(\overline{k_p}_1\overline{k_p}_2)_-$ and $(\overline{k_{p+1}}_1\overline{k_{p+1}}_2)_-$ is a sequence of opposing pairs $(\bar{i}_1\bar{i}_2)_+$ for $i=k_p+1,k_p+2,\dots k_{p+1}-1$. In Figure \ref{distinct}, we follow the lower right of Figure \ref{p2v}, and find that these opposing pairs form a vertex of degree $2(k_{p+1}-k_p)$, and all vertices are gives by these opposing pair sequences. For $\tau>0$, the number of vertices in the tiling is $\tau$, and the Euler number $\chi=\tau-n+2$, and the surface is not orientable. For $\tau=0$, the number of vertices is 1, and the Euler number $\chi=1-n+2=3-n$, and the surface is orientable. In particular, $n$ is odd in this case.

\begin{figure}[htp]
\centering
\begin{tikzpicture}[>=latex,scale=1]

\foreach \b in {1,-1}
{
\foreach \a in {0,1,4}
\draw[gray!50, very thick]
	(\a,0.93*\b) -- (1+\a,-0.93*\b);

\draw[gray!50, very thick]
	(2,0.93*\b) -- ++(0.2,-0.4*\b)
	(4,0.93*\b) -- ++(-0.2,-0.4*\b);

}

\foreach \a in {-1,0,5,6}
\draw[gray!50, very thick]
	(\a,0.93) -- (\a,-0.93);

\foreach \a in {-1,5}
\draw[dash pattern=on 2pt off 1pt]
	(0.5+\a,1) -- ++(0,-2);

\foreach \a in {0,1,4}
\draw
	(0.5+\a,1) -- ++(0,-2);

\foreach \b in {1,-1}
{
\draw[dashed, yscale=\b]
	(-1,1) to[out=180, in=-50] 
	(-1.5,1.3) to[out=130, in=50]
	(6.5,1.3) to[out=230, in=0] 
	(6,1)
	(2,1) -- (4,1);

\draw
	(-1,\b) -- (2,\b)
	(4,\b) -- (6,\b);
	
\draw[shift={(2.5, 2*\b)}, yscale=\b, ->]
	(120:0.3) arc (120:420:0.3);

\foreach \a in {-1,0,1,2,4,5,6}
\fill
	(\a,\b) circle (0.05);
	
\foreach \a in {1,2}
\node at (-0.5+\a,1.2*\b) {\scriptsize $\overline{k_p\!+\!\a}$};

\node at (4.5,1.2*\b) {\scriptsize $\overline{k_{p+1}\!-\!1}$};
\node at (5.5,1.2*\b) {\scriptsize $\overline{k_{p+1}}$};
\node at (-0.5,1.2*\b) {\scriptsize $\overline{k_p}$};
}
			
\end{tikzpicture}
\caption{Two tile tilings with all distinct edge lengths.} 
\label{distinct}
\end{figure}
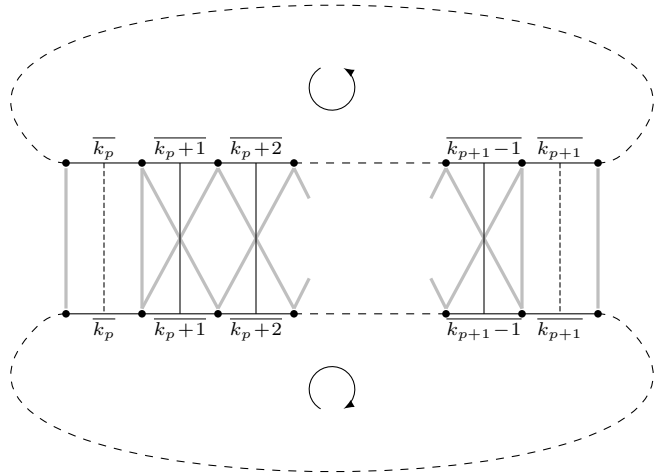

Since twisted pairs are not adjacent, the maximal number of twisted pairs is the integer part of $\frac{n}{2}$. Then we get the following result.

\begin{proposition}
In a tiling of surface by two congruent $n$-gons, all $n$ edges can have distinct lengths if and only if all edge pairs are $(\bar{i}_1\bar{i}_2)_{\sigma}$, and twisted edge pairs are not adjacent. The following are the surfaces admitting tilings by two congruent $n$-gons, such that all $n$ edges have distinct lengths:
\begin{itemize}
\item $n=2m$ is even: surface $g{\bb P}^2$, with $m\le g\le 2m-1$.
\item $n=2m-1$ is odd: surface $g{\bb P}^2$, with $m\le g\le 2m-2$, and surface $(m-1){\bb T}^2$.
\end{itemize}
\end{proposition}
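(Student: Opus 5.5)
The plan is to prove the two halves separately: first the combinatorial characterization, then the list of surfaces, which will follow from counting vertices in the characterized diagrams.

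\emph{Characterization.} Paired edges have equal length. If all $n$ edge lengths of the prototile are distinct, an edge $\bar{i}_p$ can only be paired with an edge $\bar{i}_q$ of the same index. In particular no edge is paired with another edge of the same tile, since $\bar i_p\bar i_p$ is not a pair. With $f=2$ this forces every pair to be $(\bar{i}_1\bar{i}_2)_\sigma$. Next I impose the conditions listed after Proposition \ref{avs2}.
\begin{itemize}
\item The degree one condition concerns only pairs within a single tile, so it is vacuous here.
\item The opposing degree two condition would need both $(\bar i_1\bar i_2)_+$ and $(\overline{i+1}_1\overline{i-1}_2)_+$ to be pairs. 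The second is not of the form $\bar j_1\bar j_2$ when $n\ge 7$, so this condition is also vacuous.
\item The twisted degree two condition says exactly that $(\bar i_1\bar i_2)_-$ and $(\overline{i+1}_1\overline{i+1}_2)_-$ are not both pairs. That is, twisted pairs are not adjacent.
\end{itemize}
This gives the necessity of the stated condition. Sufficiency has two parts: combinatorial sufficiency is Proposition \ref{avs2}, and the geometric part is handled in the last paragraph.

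\emph{Vertex count.} I use the description around Figure \ref{distinct}. Let the twisted indices be $k_1<\dots<k_\tau$. Following the lower right rule of Figure \ref{p2v}, the opposing run strictly between $k_p$ and $k_{p+1}$ produces one vertex of degree $2(k_{p+1}-k_p)\ge 4$, containing the corners $k_p+1,\dots,k_{p+1}$ of both tiles. For $\tau\ge1$ there are therefore $\tau$ vertices. The Euler number is $\chi=\tau-n+2$. The surface is non-orientable, because between $T_1$ and $T_2$ there are both opposing and twisted pairs, which is inconsistent with the orientation rule of Section \ref{orientable}. This gives $g{\bb P}^2$ with $g=2-\chi=n-\tau$. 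Since twisted pairs are pairwise non-adjacent in ${\bb Z}_n$, we have $1\le\tau\le\lfloor n/2\rfloor$. Every such $\tau$ is attained, for instance by taking $k_p=2p-2$.
\begin{itemize}
\item For $n=2m$ this yields $m\le g\le 2m-1$.
\item For $n=2m-1$ this yields $m\le g\le 2m-2$.
\end{itemize}
For $\tau=0$ all pairs are opposing. Then there is a single vertex, $\chi=3-n$, and the surface is orientable. This requires $n$ odd, and with $n=2m-1$ it gives $(m-1){\bb T}^2$.

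\emph{Realization with distinct lengths (main obstacle).} Here I must show that every such diagram is realized by a hyperbolic $n$-gon whose edges all have different lengths. The length conditions are automatic, since only same-index edges are identified. The angle conditions say that at each vertex both tiles contribute the same block of corners, so each block $k_p+1,\dots,k_{p+1}$ must have angle sum $\pi$. For $\tau=0$ the single condition is that all $n$ angles sum to $\pi$. The total angle is then $\tau\pi$ (or $\pi$ when $\tau=0$), which is less than $(n-2)\pi$ because $n\ge7$.

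I would first choose any positive angles satisfying the block conditions. I would then invoke the standard fact that convex hyperbolic $n$-gons with prescribed angles of sum $<(n-2)\pi$ exist and form a family of dimension $n-3>0$. Finally I would argue that the loci where two edge lengths coincide are proper analytic subsets of this family, so a generic member has $n$ distinct lengths. This genericity step is the one needing care. I would try to perturb an explicit polygon, for example by moving one vertex, to exhibit for each pair $i\ne j$ a member of the family with $|\bar i|\ne|\bar j|$; the alternative is to vary the angles within the block constraints.
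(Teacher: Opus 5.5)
\textbf{The gap: the case $\tau=0$ for even $n$.} Your overall route is the same as the paper's:
\begin{itemize}
\item force every pair to be $(\bar{i}_1\bar{i}_2)_\sigma$;
\item reduce the three conditions after Proposition \ref{avs2} to ``twisted pairs are not adjacent'';
\item count vertices block by block as in Figure \ref{distinct}.
\end{itemize}
Your $\tau\ge1$ analysis is correct: one vertex of degree $2(k_{p+1}-k_p)$ per block, $\chi=\tau-n+2$, non-orientable, $g=n-\tau$ with $1\le\tau\le\lfloor n/2\rfloor$. The error is at $\tau=0$. There you assert, as the paper's text also does, that there is a single vertex, and then conclude that $n$ must be odd. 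That vertex count holds only for odd $n$.

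With all pairs $(\bar{i}_1\bar{i}_2)_+$, the rule produces the chain $0_1,(n-1)_2,(n-2)_1,(n-3)_2,\dots$. Its $t$-th term is corner $-t$ of $T_1$ or $T_2$, according to the parity of $t$. So the chain closes after $\mathrm{lcm}(n,2)$ steps.
\begin{itemize}
\item For odd $n$ this gives one vertex of degree $2n$.
\item For $n=2m$ it gives two vertices of degree $n$: the even corners of $T_1$ together with the odd corners of $T_2$, and the complement.
\end{itemize}
In the even case $\chi=2-n+2=4-2m$, the surface is orientable, and it is $(m-1){\bb T}^2$. So the deduction ``single vertex, $\chi=3-n$, hence $n$ odd'' fails.

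\textbf{Why this case genuinely occurs.} Nothing rules this diagram out. The vertex degrees are $n\ge 8$, and only same-index edges are identified. Both vertices impose the same angle condition $\sum_i[i]=2\pi<(n-2)\pi$. The paper's own Table \ref{tiling_number} confirms this: for $3{\bb T}^2$ tiled by two octagons of the same orientation, it lists exactly one tiling with $8$ distinct lengths. That tiling is precisely this diagram with $m=4$. Hence the even-$n$ list in the statement is incomplete: $(m-1){\bb T}^2$ occurs for both parities of $n$, and no correct argument can end with the even-$n$ list as stated.

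\textbf{Consequences for your realization paragraph.} The same slip appears there. For $\tau=0$ the angle condition is $\sum_i[i]=\pi$ when $n$ is odd, but $\sum_i[i]=2\pi$ when $n$ is even.

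That paragraph also goes beyond what the paper does. The paper reads ``all edges have distinct lengths'' combinatorially, meaning only same-index edges are paired, and does not construct polygons. If you keep the geometric claim, the genericity step still needs an actual argument. For example, for each pair $i\ne j$ you would need a member of each connected component of the family with $|\bar i|\ne|\bar j|$. Perturbing a single explicit polygon only settles this on one component.
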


\end{document}